 \useunder{\uline}{\ul}{}
\def\@email#1#2{%
 \endgroup
 \patchcmd{\titleblock@produce}
  {\frontmatter@RRAPformat}
  {\frontmatter@RRAPformat{\produce@RRAP{*#1\href{mailto:#2}{#2}}}\frontmatter@RRAPformat}
  {}{}
}%
\newtheorem{theorem}{Theorem}
\newtheorem{consequence}{Consequence}
\newenvironment{customthm}[1]
  {\innercustomthm}
  {\endinnercustomthm}
\newtheorem{lemma}{Lemma}
\newenvironment{customlem}[1]
  {\innercustomlem}
  {\endinnercustomlem}
  \newenvironment{customconsequence}[1]
  {\innercustomconsequence}
  {\endinnercustomconsequence}
\newtheorem{proposition}{Proposition}
\newtheorem{claim}{Claim}
\newtheorem{definition}{Definition}
\newtheorem*{Results}{Conclusion}
\newtheorem{model}{Model}
\theoremstyle{remark}
\newtheorem*{remark}{Remark}
\newtheorem{example}{Example}
\newcommand{\N}{\mathbb{N}}
\DeclareMathOperator{\E}{\mathbb{E}\,}
\newcommand{\pr}{\mathbb{P}}
\DeclareMathOperator{\deter}{det}
\DeclareMathOperator{\var}{var}
\DeclareMathOperator{\cov}{cov}
\DeclarePairedDelimiter\floor{\lfloor}{\rfloor}
\newcommand{\RV}{{\rm RV}}
\newcommand{\VAR}{{\rm VAR}}
\newcommand{\NAAR}{{\rm NAAR}}
\newcommand{\hVAR}{{\rm hVAR}}
\newcommand{\hNAAR}{{\rm hNAAR}}
\newcommand{\diff}{{\rm d}}
\newcommand{\1}{\mathbbm{1}}
\begin{document}

\title[]{Causality in extremes of time series}

%%=============================================================%%
%% Prefix	-> \pfx{Dr}
%% GivenName	-> \fnm{Joergen W.}
%% Particle	-> \spfx{van der} -> surname prefix
%% FamilyName	-> \sur{Ploeg}
%% Suffix	-> \sfx{IV}
%% NatureName	-> \tanm{Poet Laureate} -> Title after name
%% Degrees	-> \dgr{MSc, PhD}
%% \author*[1,2]{\pfx{Dr} \fnm{Joergen W.} \spfx{van der} \sur{Ploeg} \sfx{IV} \tanm{Poet Laureate} 
%%                 \dgr{MSc, PhD}}\email{iauthor@gmail.com}
%%=============================================================%%

\author*[1,2]{\fnm{Juraj} \sur{Bodik}}\email{Juraj.Bodik@unil.ch}

\author[1]{\fnm{Zbyněk} \sur{Pawlas}}\email{pawlas@karlin.mff.cuni.cz}

\author[2]{\fnm{Milan} \sur{Paluš}}\email{mp@cs.cas.cz}

\affil[1]{\orgdiv{Department of Probability and Mathematical Statistics}, \orgname{Charles University}, \orgaddress{\city{Prague}, \country{Czech Republic}}}

\affil[2]{\orgdiv{Institute of Computer Science}, \orgname{The Czech Academy of Sciences}, \orgaddress{\city{Prague}, \country{Czech Republic}}}

%%==================================%%
%% sample for unstructured abstract %%
%%==================================%%

\abstract{Consider two stationary time series with heavy-tailed marginal distributions. We aim to detect whether they have a causal relation, that is, if a change in one causes a change in the other. Usual methods for causal discovery are not well suited if the causal mechanisms only appear during extreme events. We propose a framework to detect a causal structure from the extremes of time series, providing a new tool to extract causal information from extreme events. We introduce the causal tail coefficient for time series, which can identify asymmetrical causal relations between extreme events under certain assumptions. This method can handle nonlinear relations and latent variables. Moreover, we mention how our method can help estimate a typical time difference between extreme events. Our methodology is especially well suited for large sample sizes, and we show the performance on the simulations. Finally, we apply our method to real-world space-weather and hydro-meteorological datasets.}

% \textbf{Conclusion:} The abstract serves both as a general introduction to the topic and as a brief, non-technical summary of the main results and their implications. The abstract must not include subheadings (unless expressly permitted in the journal's Instructions to Authors), equations or citations. As a guide the abstract should not exceed 200 words. Most journals do not set a hard limit however authors are advised to check the author instructions for the journal they are submitting to.}

\keywords{{Granger causality}, {causal inference}, {nonlinear time series}, {causality-in-tail}, {extreme value theory}, {heavy tails}}

%%\pacs[JEL Classification]{D8, H51}

%%\pacs[MSC Classification]{35A01, 65L10, 65L12, 65L20, 65L70}

\maketitle

%presun paper organization to last after preliminaries

\section{Introduction}
\label{Section_introduction}
The ultimate goal of causal inference is understanding relationships between random variables and predicting the outcomes resulting from their modification or manipulation \citep{Elements_of_Causal_Inference}. 
Causal inference finds utility across a wide array of scientific domains. For instance, in the field of medicine, it aids in comprehending the propagation of epileptic seizures across distinct brain regions \citep{Rubin}. Or in climate science, it facilitates the prediction of variables such as temperature and rainfall \citep{Naveau} while unraveling the causes behind sudden changes in river discharges \citep{Linda}. Extensive efforts have been invested in establishing its mathematical foundation \citep{Pearl}. Structural causal models (SCMs) serve as mathematical tools for representing causal relationships between variables, especially in scenarios where temporal order is unavailable. Recent advancements include estimating causal mechanisms within structural causal models \citep{Peters2014,reviewANMMooij, ZhangReview}. 
However, causal discovery in time series often necessitates alternative models  \citep[Chapter 10]{Elements_of_Causal_Inference} and faces different problems \citep{RungeReview}. 

A commonly used concept for describing a causality in time series is Granger causality \citep{GrangerOriginal, GRANGER1980329}.   Though various definitions of causality, such as Sims causality \citep{sims}, structural causality \citep{White2010}, and interventional causality \citep{Eichler}, coexist, Granger causality holds a prominent position in the literature \cite[Chapter 22.2]{berzuini2012causality}. It rests upon two key principles:  precedence of cause over effect and the unique information contained in the cause that is otherwise unattainable. In this paper, we primarily consider strong Granger causality, which employs conditional independence (see Section \ref{Preliminaries}). Variations include linear Granger causality \citep{Hosaya} and Granger causality in mean \citep{ho2015granger}. In the autoregressive models considered in this paper, all previously mentioned notions are closely related, and their differences are typically not of practical relevance \citep[Chapter 22.8]{berzuini2012causality}. 

Several approaches exist for learning a causal structure in time series. State-of-the-art methods for causal discovery in time series use consecutive conditional independence testing  \citep{Zhang2008} or fitting the vector autoregressive (VAR) models \citep{Eicher}. \cite{Runge} introduce a PCMCI method that combines a state-of-the-art PC method (named after the inventors Peter Spirtes and Clark Glymour;  \cite{PCalgorithm}) and MCI (momentary conditional independence; \cite{Momentary}) to identify and adjust for many potential confounders. However, there is still a need for more robust methods against a variety of hidden confounders. \cite{Gerhardus} introduce the LPCMCI algorithm that is adapted to handle latent confounders. Another approach leverages Shannon's information theory, utilizing entropy and mutual information to probe causal aspects of dynamic and complex systems \citep{HLAVACKOVASCHINDLERPALUS}.

Typically, causal inference methods in time series describe the causality in the body of the distribution (causality in the mean). Several articles deal with the second-order causality (causality in variance; \cite{CausalityInMean}) using, e.g., GARCH (Generalized Autoregressive Conditional Heteroskedastic) modeling \citep{CausalityInVariance}. However, causality in extremes is a new field of research.  Different perspectives can be seen when looking mainly at the tails of the distributions \citep{Coles}. Many causal mechanisms are present only during extreme events, and interventions often carry information that is likely to be causal \citep{Cox}. While the connection between extreme value theory and time series has been thoroughly studied \citep{heavy_tailed_nonlinear_time_series,Mikosch+Wintenberger,Heavy_tailed_time_series}, the area of causal inference at this juncture remains unexplored.

Recent work intertwines extreme value theory and causality in SCM. \cite{EngelkeGraphicalModels} propose graphical models in the context of extremes.  \cite{10.1214/20-AOAS1355} study probabilities of necessary and sufficient causation as defined in the counterfactual theory using the multivariate generalized Pareto distributions. \cite{Deuber} develop a method for estimating extremal quantiles of treatment effects. Further approaches involve recursive max-linear models on directed acyclic graphs \citep{gissibl2017maxlinear, wdesfrgth}. 

Our paper aims to establish a framework of causality in extremes of time series. This work builds on the work of \cite{SvajciariGneccoClanok}, who first introduced the concept of causal tail coefficient in the context of SCM, followed by \cite{Pasche}, who extended this coefficient by incorporating possible covariates into the model. We aim to move the theory of causality in extremes from SCMs into a context of Granger-type causality in time series. 

The paper is organized as follows.  The subsequent section provides a concise overview of the causal tail coefficient's existing developments, presents a motivating time series example, and introduces preliminary results and notation.
Section~\ref{chapter 2} contains the main results and provides an illustrative model example. 
Section~\ref{chapter 3} extends the proposed method and discusses outcomes under assumptions violations. The section also addresses scenarios involving hidden confounders and a choice of the minimal time delay. 
Section~\ref{chapter 4} addresses the estimation problem, examining estimator properties and employing simulations on synthetic datasets. 
In Section~\ref{APPLICATIONS}, we apply the method to two real-world datasets. First, we consider an application concerning space weather and geomagnetic storms, corroborating prior findings that employ conditional mutual information. Lastly, we employ our methodology on a hydrometeorological dataset, investigating six distinct weather and climate phenomena. To maintain brevity, proofs are relocated to the \hyperref[chapter 5]{Appendix}. Appendix \ref{chapter 5} introduces auxiliary propositions used in the proofs, while the proofs themselves can be found in Appendix \ref{AppendixB}.

\subsection{SCM and existing work on the causal tail coefficient}\label{s1.2}

 \cite{SvajciariGneccoClanok} established the groundwork for the causal tail coefficient within linear SCMs \citep{Pearl}. A linear SCM over real random variables $X_1,\dots,X_p$ is a collection of $p$ assignments 
\begin{equation}\label{equation111}
    X_j = \sum_{k\in pa(j)}\beta_{jk}X_k +\varepsilon_j, \quad j\in V,
\end{equation}
where $pa(j)\subseteq V=\{1, \dots, p\}$  denotes parents of $j$, $\varepsilon_1, \dots, \varepsilon_p$ are jointly independent random variables and $\beta_{j,k}\in\mathbb{R}\setminus\{0\}$ are the causal weights. We suppose that the associated graph $\mathcal{G}=(V,E)$, with the directed edge $(i,j)$ present if and only if $i\in pa(j)$, is a directed acyclic graph (DAG) with nodes $V$ and edges $E$. We say that $X_i$ causes $X_j$ if a directed path exists from $i$ to $j$ in $\mathcal{G}$. Structural causal models describe not only observational distributions but also distributions under interventions (or manipulations) on the variables. By intervening on $X_j$, we understand a new SCM with $p$ assignments, all of them identical with (\ref{equation111}) except the assignment for $X_j$ \citep{Pearl}.     

Let $X_i, X_j$ be a pair of random variables from linear SCM. We assume that $X_i, X_j$ are heavy-tailed with respective distributions $F_i, F_j$.  The causal (upper) tail coefficient of  $X_i$ on $X_j$ is defined in \cite{SvajciariGneccoClanok} as
$$
\Gamma_{i,j} := \lim_{u\to 1^-}\mathbb{E}[ F_j(X_j)\mid F_i(X_i)>u ], 
$$
if the limit exists. This coefficient lies between zero and one and captures the causal influence of $X_i$ on $X_j$ in the upper tail since, intuitively, if $X_i$ has a monotonically increasing causal influence on $X_j$, we expect $\Gamma_{i,j}$ to be close to unity. The coefficient is asymmetric, as extremes of $X_j$ need not lead to extremes of $X_i$, and in that case, $\Gamma_{j,i}$ will be appreciably smaller than $\Gamma_{i,j}$. In Section \ref{ESXDRCFTGV}, we discuss the intuition of the choice of this coefficient in more detail (in the context of time series).

Under certain assumptions on the tails of $\varepsilon_i$ and the linear SCM, the values of $\Gamma_{i,j}$ and $\Gamma_{j,i}$ allow us to discover the causal relationship between $X_i$ and $X_j$. These relations are summarized in Table \ref{TableGnecco}.
\begin{table}
\centering
\begin{tblr}{
  vline{-} = {1}{},
  vline{1-2,5} = {2-4}{},
  hline{1-2,5} = {-}{},
  hline{3-4} = {1}{},
}
                                     & $\Gamma_{j,i}=1$ & $\Gamma_{j,i}\in (0.5, 1)$ & $\Gamma_{j,i}=0.5$ \\
$\Gamma_{i,j}=1$ &                                      & $X_i$ causes $X_j$                                      &                                        \\
$\Gamma_{i,j}\in (0.5, 1)$ & $X_j$ causes $X_i$             & Common cause only                                             &                                        \\
$\Gamma_{i,j}=0.5$ &                                      &                                                               & No causal link                         
\end{tblr}
\caption{Causal relationship between a pair of random variables $X_i, X_j$ following appropriately restricted linear SCM under different values of the causal tail coefficients. Blank entries and cases when $\Gamma_{i,j} < 0.5$ or $\Gamma_{j,i}<0.5$ cannot occur \citep[Theorem 1]{SvajciariGneccoClanok}.}
\label{TableGnecco}
\end{table}

\cite{SvajciariGneccoClanok} proposed a consistent non-parametric estimator of $\Gamma_{i,j}$. Without loss of generality, consider $i = 1$ and $j = 2$. If $(x_{1,1},x_{1,2}), \dots,(x_{n,1},x_{n,2})$ are $n\in\mathbb{N}$ independent replicates of $(X_1, X_2)$, the estimator of $\Gamma_{1,2}$ is defined as 
\begin{equation}
\label{2fdsa}
\hat{\Gamma}_{1,2}:=\frac{1}{k}\sum_{i=1}^n\hat{F}_2(x_{i,2})\1(x_{i,1}>x_{(n-k+1),1}),    
\end{equation}
 where $x_{(n-k+1),1}$ is the $k$-th largest value of $x_{1,1}, \dots, x_{n,1}$, $\hat{F}_2(t)=\frac{1}{n}\sum_{i=1}^n \1(x_{i,2}\leq t)$, and $\1(\cdot)$ is the indicator function.

\cite{Pasche} adapted the estimation process to account for potential confounding factors. The authors modified \eqref{2fdsa} by replacing $\hat{F}_2(x)$ by a covariate-dependent estimator, where the upper tail of $\hat{F}_2(x)$ is modeled by a Pareto approximation \citep{Coles}. 
implemented a permutation test to formally assess the hypothesis that the causal tail coefficient is equal to $1$.

\subsection{Motivating example and the main idea in the context of time series}
\label{ESXDRCFTGV}
The following example illustrates a typical case considered in this paper. Let $(\mathbf{X},\mathbf{Y})^\top= ((X_t,Y_t)^\top, t\in\mathbb{Z})$ be a bivariate strictly stationary\footnote{A stochastic process is strictly stationary if the joint distributions of $n$ consecutive variables are time-invariant (e.g., Section 2.1.3 in \cite{MVT_timeseries_SPRINGER}). We will not work with other stationarity types.} time series described by the following recurrent relations
\begin{align*}
X_t&=\frac{1}{2}X_{t-1}+\varepsilon_t^X,\\
Y_t&=\frac{1}{2}Y_{t-1}+ \sqrt{X_{t-5}} + \varepsilon_t^Y,
\end{align*}
where $\varepsilon_t^X, \varepsilon_t^Y\overset{iid}{\sim}$ Pareto$(1,1)$ \footnote{$\varepsilon_t^X, \varepsilon_t^Y$ are iid (independent and identically distributed), 
following a Pareto distribution with parameters equal to $1$. 
The distribution function of a  Pareto$(a,b)$ random variable is in the form $F(x)=1-(\frac{a}{x})^b$ for $x\geq a$, 
zero otherwise. When $a=b=1$, it is often called the standard Pareto 
distribution.
}. 
\begin{figure}[t]
\centering
\includegraphics[scale=0.65]{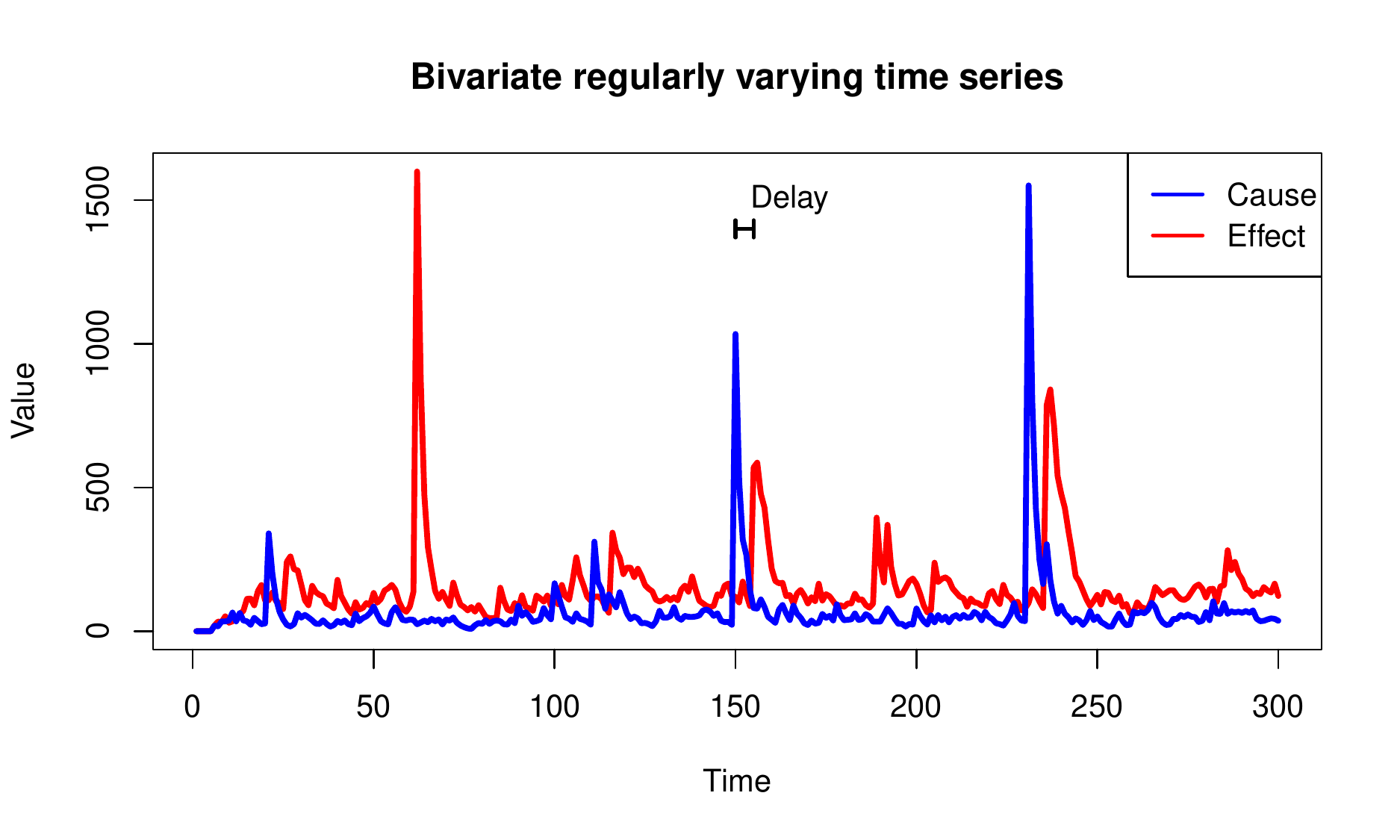}
\caption{The figure represents a sample realisation of $(\mathbf{X},\mathbf{Y})^\top$ from Subsection~\ref{ESXDRCFTGV} ($\mathbf{X}$ is the cause and $\mathbf{Y}$ is the effect). The delay represents the time delay between the time series, in this case, equal to $5$. }
\label{Pekne_Pareto_grafy}
\end{figure}
This scenario is depicted in Figure~\ref{Pekne_Pareto_grafy}. 
Here, $\mathbf{X}$ causes $\mathbf{Y}$ (in Granger sense, see Definition \ref{AFVAR} given later), simply because the knowledge of $\mathbf{X}$ improves the prediction of $\mathbf{Y}$. However, the converse is not true.

Consider the data shown in Figure~\ref{Pekne_Pareto_grafy}. Our goal is to identify any causal relationship between these time series.  There is (at least in this realization) an evident asymmetry between the two time series in the extremes. If the cause ($\mathbf{X}$) is extremely large, then the effect ($\mathbf{Y}$) \textit{will} be also extremely large (see the second and third \say{jump}). However, if $\mathbf{Y}$ is extremely large, then $\mathbf{X}$ will not necessarily be extremely large (as evident in the first \say{jump}). This indicates that an extreme value of $\mathbf{X}$ tends to trigger an extreme value of $\mathbf{Y}$, implying a causal link in an intuitive sense. Yet, a crucial factor is the presence of a \textit{time} \textit{delay} (or \textit{time} \textit{lag}). The extremes don't have to be concurrent -- there's a time lag during which the influence of $\mathbf{X}$ on $\mathbf{Y}$ becomes apparent. In this specific example, this lag is exactly $5$ time units.

To encapsulate this idea mathematically, we introduce the \textit{causal tail coefficient for time series}
\begin{equation}\label{2.1}
\Gamma^{time}_{\mathbf{X}\to \mathbf{Y}}(p):=\lim_{u\to 1^-}\E[\max\{F_Y(Y_0), \dots, F_Y(Y_{p})\}\mid F_X(X_0)>u],
\end{equation}
where $F_X, F_Y$ are the marginal distribution functions of $\mathbf{X}, \mathbf{Y}$, respectively. This coefficient mathematically expresses natural questions: Does an extreme value in $\mathbf{X}$ invariably lead to an extreme value in $\mathbf{Y}$? How large $\mathbf{Y}$ will be in the next $p$ steps if $\mathbf{X}$ is extremely large (in their respective scales)? In our example, we can consider $p=5$. If $X_0$ is extremely large, then $Y_5$ will surely also be extremely large (large cause implies large effect), but not the other way around (large effect does not imply large cause). Hence intuitively, the following should hold: $\Gamma^{time}_{\mathbf{X}\to \mathbf{Y}}(p)=1$, but $\Gamma^{time}_{\mathbf{Y}\to \mathbf{X}}(p)<1$. The main part of the paper consists of determining the assumptions under which this is true.

\subsection{Preliminaries and notation}
\label{Preliminaries}
We use bold capital letters to represent notation for time series and random vectors. A time series, or a random process, consists of a set of random variables $\mathbf{Z} = (Z_t, t \in \mathbb{Z})$ defined on the same probability space. We exclusively work with time series defined over integers.

We will use the standard notion of regular variation \citep{Reg_Var_Resnick_kniha, RegularVariationBook}.  A real random variable $X$ is regularly varying with tail index $\theta>0$, if its distribution function has a form $F_X(x)=1-x^{-\theta} L(x)$ for some slowly varying function $L$, i.e., a function satisfying $\lim_{x\to\infty}\frac{L(\alpha x)}{L(x)} = 1$ for every $\alpha > 0$  \citep[Section 1.3]{Heavy_tailed_time_series}. This property is denoted by $X\sim \RV(\theta)$. Examples of regularly varying distributions include Pareto, Cauchy or Fréchet distributions, to name a few \citep{Heavy_tail_article}. For real functions $f,g$, we denote $f(x)\sim g(x) \iff \lim_{x\to\infty}\frac{f(x)}{g(x)}=1$. 

The main principle that we aim to use is the so-called max-sum equivalence, that is, for two random variables $X,Y$ we have $\pr(X+Y>x)\sim \pr(X>x)+\pr(Y>x)\sim \pr(\max(X,Y)>x)$ as $x\to\infty$. This is satisfied when $X,Y\overset{iid}{\sim} \RV(\theta)$ \citep[ Section 1.3.1]{RegularVariationBook}. Similar results hold even if we deal with finite or infinite sums of random variables \citep[Section 4.5]{Reg_Var_Resnick_kniha}. 

Consider a bivariate process  $(\mathbf{X},\mathbf{Y})^\top =( (X_t, Y_t)^\top, t\in\mathbb{Z})$. The concept of strong Granger causality \citep[Chapter 22.2]{berzuini2012causality} can be expressed as follows: The process $\mathbf{X}$ is said to cause $\mathbf{Y}$ (denoted as $\mathbf{X}\to \mathbf{Y}$) if $Y_{t+1}$ is not independent with the past of $\mathbf{X}$ given  all relevant variables in the universe up to time $t$ except the past values of $\mathbf{X}$; that is, 
$$Y_{t+1} \not\!\perp\!\!\!\perp \mathbf{X}_{past(t)}\mid \mathcal{C}_{t}\setminus \mathbf{X}_{past(t)},$$
where $past(t) = (t, t-1, t-2, \dots)$ and $\mathcal{C}_t$ represents all relevant variables in the universe up to time $t$. The philosophical notion of $\mathcal{C}_{t}$ is typically replaced by only a finite set of relevant variables. 
To illustrate with an example, given a three-dimensional process  $(\mathbf{X},\mathbf{Y},\mathbf{Z})^\top=( (X_t, Y_t, Z_t)^\top, t\in\mathbb{Z})$, we replace the information set $\mathcal{C}_t$  by $(\mathbf{X}_{past(t)}, \mathbf{Y}_{past(t)}, \mathbf{Z}_{past(t)})^\top$  and say that the process $\mathbf{X}$ causes $\mathbf{Y}$ with respect to $(\mathbf{X},\mathbf{Y},\mathbf{Z})^\top$ if $Y_{t+1} \not\!\perp\!\!\!\perp \mathbf{X}_{past(t)}\mid  (\mathbf{Y}_{past(t)}, \mathbf{Z}_{past(t)})$.  We have to note that such $\mathbf{X}$ has to be seen only as a potential cause, since enlarging the information set can lead to a change in the causal structure. 

In certain models, the definition of causality can be  simplified  \citep{Web_stranka_causality_in_time_series}.  For instance, \cite{sims} demonstrated that the Granger causality definition is equivalent to certain parameter restrictions within a linear framework. We provide the formal definition in Subsection \ref{SectionModels} for a specific class of autoregressive models.

\section{The causal tail coefficient for time series}
\label{chapter 2}

The central concept introduced in this paper is the \textit{causal tail coefficient for time series} $\Gamma^{time}_{\mathbf{X}\to \mathbf{Y}}(p)$ \textit{with extremal delay} $p\in\mathbb{N}$ defined in (\ref{2.1}). In scenarios where we exclude instantaneous effects (such as when $X_0$ directly causes $Y_0$), we can directly employ the following coefficient:
\begin{equation*}\label{Gamma(p,-0)}
\begin{split}
    &\Gamma^{time, -0}_{\mathbf{X}\to \mathbf{Y}}(p):=\lim_{u\to 1^-}\E[\max\{F_Y(Y_1), \dots, F_Y(Y_{p})\}\mid F_X(X_0)>u],
\end{split}
\end{equation*}
where, as usual,  $F_X, F_Y$ are the marginal distribution functions of $\mathbf{X}, \mathbf{Y}$, respectively.

Notice that $\Gamma^{time}_{\mathbf{X}\to \mathbf{Y}}(p)\in[0,1]$, and $\Gamma^{time,-0}_{\mathbf{X}\to \mathbf{Y}}(p)\leq \Gamma^{time}_{\mathbf{X}\to \mathbf{Y}}(p)\leq \Gamma^{time}_{\mathbf{X}\to \mathbf{Y}}(p+1)$. Furthermore, for any increasing functions $h_1$ and $h_2:\mathbb{R}\to\mathbb{R}$, we observe $\Gamma^{time}_{\mathbf{X}\to \mathbf{Y}}(p) = \Gamma^{time}_{h_1(\mathbf{X})\to h_2(\mathbf{Y})}(p)$, where $h_1(\mathbf{X})=(h_1(X_t), t\in\mathbb{Z})$, as $\Gamma^{time}_{\mathbf{X}\to \mathbf{Y}}(p)$ depends solely on rescaled margins $F_X(X_i)$ and $F_Y(Y_i)$.

\subsection{Models}
\label{SectionModels}
In this paper, we work with two models of time series--- $\VAR(q)$ process (vector autoregressive process of order $q$, Section 2.3.1 in  \cite{MVT_timeseries_SPRINGER}) and $\NAAR(q)$ model (nonlinear additive
autoregressive model of order $q$). We now introduce the notation. 

\begin{definition}
\label{VAR(q)}
We say that $(\mathbf{X},\mathbf{Y})^\top=((X_t,Y_t)^\top, t\in\mathbb{Z})$ follows the bivariate $\VAR(q)$ model, if it has the following representation:
\begin{align*}
 X_t&= \alpha_{1}X_{t-1}+\dots + \alpha_{q}X_{t-q}+ \gamma_{1}Y_{t-1}+\dots +\gamma_{q}Y_{t-q} + \varepsilon_t^X,\\
Y_t&=\beta_{1}Y_{t-1}+\dots + \beta_{q}Y_{t-q} + \delta_{1}X_{t-1}+\dots +\delta_{q}X_{t-q} + \varepsilon_t^Y,
\end{align*}
where $\alpha_i, \beta_i, \gamma_i, \delta_i\in\mathbb{R}$, $i=1,\dots,q$, are real constants, and $(\varepsilon_t^X, t\in\mathbb{Z})$, $(\varepsilon_t^Y, t\in\mathbb{Z})$ are white noises. We say that it satisfies the stability condition, if $\deter(I_{d}-A_1z-\dots-A_qz^q)\neq 0$ for all $|z|\leq 1$, where $I_d$ denotes the $d$-dimensional identity matrix and $A_i = \begin{pmatrix}
\alpha_i & \gamma_i  \\
\beta_i & \delta_i 
\end{pmatrix}$.
We say that $\mathbf{X}$ (Granger) causes $\mathbf{Y}$ (notation $\mathbf{X}\to\mathbf{Y}$) if there exists $i\in \{1, \dots, q\}$ such that $\delta_i\neq 0$.  
\end{definition}
Let $i,j\in\mathbb{Z}: 0\leq j-i\leq q$. We can specify that $X_i$ causes $Y_j$ if $\delta_{j-i}\neq 0$. Note that $\mathbf{X}$ causes $\mathbf{Y}$ if and only if there exist $i\leq j$ such that $X_i$ causes $Y_j$. In this paper, our focus is on exploring whether $\mathbf{X}$ causes $\mathbf{Y}$. It's also important to highlight that a situation where $\mathbf{X}$ causes $\mathbf{Y}$ and vice versa simultaneously is admissible.

Under the stability assumption, we can rewrite these time series using causal representations \citep[Section 2.1.3]{MVT_timeseries_SPRINGER}:
\begin{equation} \label{causalVAR}
X_t=\sum_{i=0}^\infty a_i\varepsilon^X_{t-i}+\sum_{i=0}^\infty c_i\varepsilon^Y_{t-i};\qquad
Y_t=\sum_{i=0}^\infty b_i\varepsilon^Y_{t-i}+\sum_{i=0}^\infty d_i\varepsilon^X_{t-i},
\end{equation}
with suitable constants $a_i, b_i, c_i, d_i\in\mathbb{R}$. Thus, $\mathbf{X}$ causes $\mathbf{Y}$ if and only if there exists $i: d_i\neq 0$ \citep{GrangerSims}. 

Now, we state our model assumptions. 
\begin{definition}[Heavy-tailed VAR model] \label{heavy-tailed-VAR}
Let $(\mathbf{X},\mathbf{Y})^\top$ follow the stable $\VAR(q)$ model as defined above with its causal representation given by \eqref{causalVAR}. We introduce the assumptions:
\begin{itemize}
\item   $\varepsilon_t^X, \varepsilon_t^Y\overset{\text{iid}}{\sim}\RV(\theta)$ for some $\theta>0$,
\item  $\alpha_i, \beta_i, \gamma_i, \delta_i\geq 0$,
\item  $\exists \delta>0$ such that $\sum_{i=0}^\infty a_i^{\theta-\delta}<\infty, \sum_{i=0}^\infty b_i^{\theta-\delta}<\infty, \sum_{i=0}^\infty c_i^{\theta-\delta}<\infty$,\\$ \sum_{i=0}^\infty d_i^{\theta-\delta}<\infty$.
\end{itemize}
Under these assumptions, we refer to $(\mathbf{X},\mathbf{Y})^\top$ as following the \textit{heavy-tailed $\VAR(q, \theta)$ model} ($\hVAR(q,\theta)$ for short).
\end{definition}

The first assumption is crucial, as it ensures the regular variation of our time series. The second assumption can be relaxed and can be replaced by the extremal causal condition discussed in Subsection \ref{real-valued}. The third assumption is aimed at ensuring the a.s. summability of the sums $\sum_{i=0}^\infty a_i\varepsilon^X_{t-i}$.  Moreover, this assumption guarantees the stationarity of $\mathbf{X}$ and $\mathbf{Y}$. It also establishes a crucial max-sum equivalence relationship, namely $\pr(\sum_{i=0}^\infty \alpha_i\varepsilon_i^\cdot>u)\sim [\sum_{i=0}^\infty \alpha_i^\theta] \pr(\varepsilon_1^\cdot>u)$ \citep[Lemma A.3]{Theorem1.7}.

We consider a nonlinear generalization in order to minimize the assumptions on the body of the time series. First, we introduce the nonlinear counterpart of Definition~\ref{VAR(q)}.

\begin{definition}
\label{AFVAR}
We define $(\mathbf{X},\mathbf{Y})^\top=((X_t,Y_t)^\top, t\in\mathbb{Z})$ to follow the bivariate $\NAAR(q)$ model, specified by the equations:
\begin{align*}
X_t=f_{1}(X_{t-1}) + f_{2}(Y_{t-q}) + \varepsilon_t^X; 
\qquad
Y_t=g_{1}(Y_{t-1}) + g_{2}(X_{t-q}) + \varepsilon_t^Y.
\end{align*}
We state that $\mathbf{X}$ (Granger) causes $\mathbf{Y}$ if $g_2$ is a non-constant function on the support of $X_{t-q}$ (a.s.).
\end{definition}
It is interesting to note that in the univariate case $d=1$, an often used condition $\lim_{|x|\to\infty}\frac{|f_1(x)|}{|x|}<1$ is \say{almost} necessary for stationarity (\cite[Corollary 2.2]{Bhattacharya} or \cite[Theorem 2.2]{Andel}). 

\begin{definition}[Heavy-tailed NAAR model]
Let $(\mathbf{X},\mathbf{Y})^\top$ follow the stationary $\NAAR(q)$ model from Definition \ref{AFVAR}. We require functions $f_1, f_2, g_1, g_2$ to either be zero functions or continuous non-negative functions satisfying  $\lim_{x\to\infty}h(x)=\infty$ and $\lim_{x\to\infty}\frac{h(x)}{x}<1$ for $h=f_1, f_2, g_1, g_2$.
Moreover, let $\varepsilon_t^X, \varepsilon_t^Y\overset{\text{iid}}{\sim}\RV(\theta)$ be non-negative for some $\theta>0$. Then, we say that $(\mathbf{X},\mathbf{Y})^\top$ follows the \textit{heavy-tailed $\NAAR(q,\theta)$ model} ($\hNAAR(q,\theta)$ for short). 
\end{definition}

It's important to emphasize that our restrictions on the functions $f_1, f_2, g_1, g_2$ are minimal in the body -- we've only imposed conditions on their tails.  By imposing the constraint $\lim_{x\to\infty}\frac{h(x)}{x}<1$ for $h=f_2, g_2$, we ensure that the tail of $f_2(Y_{t-q})$ (resp. $g_2(X_{t-q})$) is not larger than the tail of $Y_{t-q}$ (resp. $X_{t-q}$). Assumption  $\lim_{x\to\infty}\frac{h(x)}{x}<1$ for $h=f_1, g_1$ is closely related to the time series' stationarity. In the $\hNAAR$ models, our assumption of regular variation for the noise variables directly implies that $X_t, Y_t\sim\RV(\theta)$ \cite[Theorem 2.3]{heavy_tailed_nonlinear_time_series}. 
Moreover, our framework doesn't assume the existence of any moments; it accommodates cases like $\theta<1$, where the expectation $\mathbb{E}(X_t)$ does not exist.

Obviously, $\hNAAR(q,\theta)$ models encompass a broader class of time series models compared to $\hVAR(q,\theta)$ models. However, they are not nested. In the $\hNAAR(q,\theta)$ case, we assumed that the $X_t$ and $Y_t$ are functions of only two previous values ($X_t$ being a function of $X_{t-1},Y_{t-q}$ and $Y_t$ being a function of $Y_{t-1}, X_{t-q}$). In the VAR case, $X_t, Y_t$ can depend on more than two previous values. Moreover, the NAAR case has an additional assumption $\varepsilon_t^X, \varepsilon_t^Y\geq 0$. Nevertheless, up to these two differences, the class of  $\hVAR(q,\theta)$  models lies inside of the class of $\hNAAR(q,\theta)$ models. 

\subsection{Causal direction}
\label{Section 2.2}
The subsequent two theorems constitute the core of this paper, connecting the classical concept of causality with causality in extreme events.
\begin{theorem}\label{Theorem 2.1} 
Let  $(\mathbf{X},\mathbf{Y})^\top$ be a bivariate time series which follows either the $\hVAR(q,\theta)$ model or the $\hNAAR(q,\theta)$ model. If $\mathbf{X}$ causes $\mathbf{Y}$, then $\Gamma^{time}_{\mathbf{X}\to \mathbf{Y}}(q)=1$. 
\end{theorem}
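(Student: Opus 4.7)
The goal is to prove $\lim_{u\to 1^-}\E[\max_{j\in\{0,\dots,q\}} F_Y(Y_j)\mid F_X(X_0)>u]=1$. Since the integrand lies in $[0,1]$, bounded convergence reduces this to showing $\max_j F_Y(Y_j)\to 1$ in conditional probability given $F_X(X_0)>u$, which, as $Y_t$ has an unbounded regularly varying right tail, is equivalent to $\max_j Y_j\to\infty$ in the same conditional probability. The event $\{F_X(X_0)>u\}$ equals $\{X_0>F_X^{-1}(u)\}$ with $F_X^{-1}(u)\to\infty$ as $u\to 1^-$, so the task is to transfer the largeness of $X_0$ to some $Y_j$ with $j\le q$.

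\textbf{The two models.} The $\hNAAR$ case is essentially immediate from the structural equation: non-negativity of $g_1(Y_{q-1})$ and $\varepsilon^Y_q$ gives $Y_q\ge g_2(X_0)$ almost surely, and ``$\mathbf{X}$ causes $\mathbf{Y}$'' forces $g_2$ to be non-zero, which by the model's dichotomy gives $g_2(x)\to\infty$; combined with $X_0\to\infty$ on the conditioning event, this yields $Y_q\to\infty$. For the $\hVAR$ case I would pass to the causal MA representation \eqref{causalVAR}. The non-negativity of the matrices $A_i$ in Definition \ref{VAR(q)} propagates through the standard recursion $\Psi_k=\sum_{i=1}^{\min(k,q)} A_i\Psi_{k-i}$, $\Psi_0=I$, so $\Psi_k\ge 0$ entrywise; in particular, if $k_0\in\{1,\dots,q\}$ is the smallest index with $\delta_{k_0}>0$, then the $(2,1)$-entry of $\Psi_{k_0}$ satisfies $d_{k_0}\ge\delta_{k_0}>0$. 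I would then invoke the max-sum equivalence (the ``one big jump'' principle recalled in Section \ref{Preliminaries}): under the summability assumption of Definition \ref{heavy-tailed-VAR}, conditional on $X_0>F_X^{-1}(u)$ with $u\to 1^-$, with probability tending to one a single noise among $\{\varepsilon^X_{-i}\}\cup\{\varepsilon^Y_{-i}\}$ accounts for the largeness of $X_0$. One then argues case-by-case that this same noise drives some $Y_j$ with $j\in\{0,\dots,q\}$ to infinity through its positive MA coefficient $d_{j+i}$ or $b_{j+i}$.

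\textbf{Main obstacle.} The delicate step is this last one in the VAR case. For every $i$ with $a_i>0$ (resp.\ $c_i>0$) one needs the support of the MA sequence $(d_l)$ (resp.\ $(b_l)$) to meet the window $\{i,i+1,\dots,i+q\}$; this is not a pure algebraic triviality but rests on the non-negativity of all VAR coefficients and on the fact that the extremal delay $q$ in $\Gamma^{time}_{\mathbf X\to\mathbf Y}(q)$ matches the model order. I expect this to be formalized through auxiliary propositions in Appendix \ref{chapter 5} describing the joint upper-tail behavior of $(X_0,Y_0,\dots,Y_q)$ for linear processes driven by $\RV(\theta)$ noise, together with a concentration argument to control the ``other terms'' in the MA expansion of $Y_j$, which are not independent of the conditioning event but whose left-tail effects are negligible on the scale of the right-tail quantile of $X_0$.
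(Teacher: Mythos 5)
Your $\hNAAR$ argument coincides with the paper's proof: non-negativity of $g_1$ and $\varepsilon^Y_q$ gives $Y_q\geq g_2(X_0)$, and the model's dichotomy forces $g_2(x)\to\infty$, so $\pr(Y_q>\lambda\mid X_0>v)\to 1$ for every $\lambda$. In the $\hVAR$ case, however, your route has genuine gaps at exactly the two places you flag, and both concern steps you defer to hoped-for auxiliary machinery that the paper does not in fact use for this theorem. First, the ``window'' condition is not settled by joint upper-tail propositions; it follows in one line from non-negativity: reading the structural equation $Y_r=\delta_r X_0+\sum_{i=1}^q\beta_i Y_{r-i}+\sum_{i\neq r}\delta_i X_{r-i}+\varepsilon^Y_r$ (with $\delta_r>0$, $r\leq q$) in MA form, every contribution besides $\delta_r X_0$ has non-negative coefficients, whence $d_{i+r}\geq\delta_r a_i$ and $b_{i+r}\geq \delta_r c_i$; the paper records precisely this as Lemma \ref{lemma Extremal causal condition}, and it fixes your window uniformly at the single offset $r$. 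Second, the one-big-jump decomposition itself, the case analysis over which noise is large, and the unstated ``concentration argument'' for the residual terms under the conditioning event are all unnecessary for this direction. The paper conditions directly on $\{X_0>v\}$, bounds $\pr(Y_r>\lambda\mid X_0>v)\geq \pr\bigl(W>\lambda-\delta_r v\mid X_0>v\bigr)$ with $W=\sum_{i}\phi_i\varepsilon^X_{r-i}+\sum_i\psi_i\varepsilon^Y_{r-i}$, $\phi_i,\psi_i\geq 0$, and then removes the conditioning with the association (FKG-type) inequality for non-decreasing functions of independent noises (Proposition \ref{PrvaLemma}): $\pr(W>\lambda-\delta_r v\mid X_0>v)\geq\pr(W>\lambda-\delta_r v)\to 1$ because $\lambda-\delta_r v\to-\infty$. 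This is exactly why the paper can remark that the regular variation assumption is never used in the proof of Theorem \ref{Theorem 2.1}; your max-sum-equivalence plan imports the machinery (Proposition \ref{TentoTheorem} and its lemmas) that the paper reserves for the converse statement, Theorem \ref{Theorem 2.2.}, where the single-big-jump principle genuinely is the driving idea.

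Could your sketch be completed? Plausibly yes: Lemma \ref{lemma Extremal causal condition} supplies the support condition, and the residual control would still need a truncation argument for the infinite MA sums together with a careful treatment of the dependence between the conditioning event and the remaining noises (in the spirit of the proof of Proposition \ref{TentoTheorem}, since the noises may be negative even though the coefficients are not). But as written, the VAR half rests on two load-bearing steps that are conjectured rather than proven, and it overlooks the much shorter direct argument: because $X_0$ itself enters the defining equation of $Y_r$ with a strictly positive coefficient, no extreme-value theory is needed at all.
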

The proof can be found in \hyperlink{Proof of Theorem 2.1.}{Appendix \ref{AppendixB}}.
%\hyperref[Proof of Theorem 2.1.]{Appendix \ref{AppendixB}}. 
Intriguingly, the regular variation condition is not used within the proof.  We assume that we know the exact (correct) order $q$. Nevertheless, for every $p\geq q$, we have $\Gamma^{time}_{\mathbf{X}\to \mathbf{Y}}(p)\geq \Gamma^{time}_{\mathbf{X}\to \mathbf{Y}}(q) =1$. The choice of an appropriate delay $p$ will be discussed in Subsection \ref{optimal lag section}. 

\begin{theorem} \label{Theorem 2.2.}
Let  $(\mathbf{X},\mathbf{Y})^\top$ be a bivariate time series which follows either the $\hVAR(q, \theta)$ model or the $\hNAAR(q, \theta)$ model. If $\mathbf{Y}$ does not cause $\mathbf{X}$, then $\Gamma^{time}_{\mathbf{Y}\to \mathbf{X}}(p)<1$ for all $p\in\mathbb{N}$. 
\end{theorem}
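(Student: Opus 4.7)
The plan is to find, for each model, an event $E_y\subseteq\{Y_0>y\}$ that is (asymptotically) independent of $(X_0,\dots,X_p)$ and whose probability is of the same order as $\mathbb{P}(Y_0>y)$. On such an event, $(X_0,\dots,X_p)$ behaves like its unconditional version, so $\max_i F_X(X_i)$ stays bounded away from $1$ with positive probability, which forces $\Gamma^{time}_{\mathbf{Y}\to\mathbf{X}}(p)<1$.

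In the $\hVAR(q,\theta)$ case, $\mathbf{Y}$ not causing $\mathbf{X}$ means $\gamma_1=\dots=\gamma_q=0$, and the causal representation~\eqref{causalVAR} then forces $c_i=0$ for every $i\geq 0$, so $X_t=\sum_i a_i\varepsilon^X_{t-i}$ is a functional of the $\varepsilon^X$-sequence alone. Decompose $Y_0=\tilde Y_0+Z_0$ with $\tilde Y_0:=\sum_i b_i\varepsilon^Y_{-i}$ and $Z_0:=\sum_i d_i\varepsilon^X_{-i}$; then $\tilde Y_0$ is independent of the entire process $\mathbf{X}$, and the max-sum equivalence recalled right after Definition~\ref{heavy-tailed-VAR} yields $\mathbb{P}(\tilde Y_0>y)\sim\lambda\,\mathbb{P}(Y_0>y)$ with $\lambda=(\sum_i b_i^\theta)/(\sum_i b_i^\theta+\sum_i d_i^\theta)>0$. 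Since the noise can be signed, I take $E_y:=\{\tilde Y_0>y+M\}\cap\{Z_0>-M\}$ for a large fixed $M$. Then $E_y\subseteq\{Y_0>y\}$, and regular variation of $\tilde Y_0$ together with $\mathbb{P}(Z_0>-M)\uparrow 1$ give $\mathbb{P}(E_y)/\mathbb{P}(Y_0>y)\to\lambda\,\mathbb{P}(Z_0>-M)$, which can be made as close to $\lambda$ as desired. In the $\hNAAR(q,\theta)$ case, $\mathbf{Y}$ not causing $\mathbf{X}$ means $f_2\equiv 0$, hence $X_t=f_1(X_{t-1})+\varepsilon^X_t$ is independent of $(\varepsilon^Y_t)$. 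Non-negativity yields $Y_0\geq\varepsilon^Y_0$, so the simpler choice $E_y:=\{\varepsilon^Y_0>y\}$ works, and regular variation of both $Y_0$ and $\varepsilon^Y_0$ with the same index $\theta$ gives $\mathbb{P}(E_y)/\mathbb{P}(Y_0>y)\to c>0$.

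Once $E_y$ is in hand I choose $w_0\in(0,1)$ close enough to $1$ that $\pi_0:=\mathbb{P}(\max_{0\leq i\leq p}F_X(X_i)\leq w_0)>0$; a union bound plus continuity of $F_X$ shows any $w_0>1-1/(2(p+1))$ suffices. Using $E_y\subseteq\{Y_0>y\}$ together with independence of $E_y$ and $(X_0,\dots,X_p)$ (in the VAR case letting $M\to\infty$ after letting $y\to\infty$), I obtain
\begin{equation*}
\liminf_{y\to\infty}\mathbb{P}\bigl(\max_i F_X(X_i)\leq w_0\mid Y_0>y\bigr)\;\geq\;\pi_0\cdot c',
\end{equation*}
for some positive constant $c'$ (equal to $c$ in the NAAR case and to $\lambda$ in the VAR case), whence $\Gamma^{time}_{\mathbf{Y}\to\mathbf{X}}(p)\leq 1-(1-w_0)\,\pi_0\,c'<1$.

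The main delicate step is the VAR case with signed noise: the event $\{\tilde Y_0>y\}$ no longer forces $\{Y_0>y\}$, so one has to marry the tail estimate to the truncation $\{Z_0>-M\}$ while using regular variation to ensure that shifting the threshold of $\tilde Y_0$ by the fixed amount $M$ does not change the limiting tail ratio $\mathbb{P}(\tilde Y_0>y+M)/\mathbb{P}(Y_0>y)\to\lambda$ as first $y\to\infty$ and then $M\to\infty$.
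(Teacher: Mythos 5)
Your $\hVAR(q,\theta)$ half is correct, and it takes a genuinely more elementary route than the paper. The paper lower-bounds $\pr(\max(X_0,\dots,X_p)<\lambda\mid Y_0>v)$ via $\sum_{t=0}^p|X_t|<\lambda$ and then invokes the full limiting computation of Proposition \ref{TentoTheorem} (in its absolute-value consequence), whose positivity comes from the $\varepsilon^Y$-component $C>0$; you instead implement the one-big-jump idea directly as a lower bound, forcing the $\mathbf{X}$-independent part $\tilde Y_0=\sum_i b_i\varepsilon^Y_{-i}$ above $y+M$ and truncating the shared part via $\{Z_0>-M\}$. This avoids Proposition \ref{TentoTheorem} altogether and buys simplicity, at the cost of not producing the exact limit constant (which the paper's machinery also yields and uses elsewhere, e.g.\ in Example 1). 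One slip of wording: $E_y$ is \emph{not} independent of $(X_0,\dots,X_p)$, since $Z_0=\sum_i d_i\varepsilon^X_{-i}$ is $\varepsilon^X$-measurable; the correct factorization pulls out only $\tilde Y_0$, i.e.\ $\pr(\{\max_i F_X(X_i)\le w_0\}\cap E_y)=\pr(\tilde Y_0>y+M)\,\pr(\{\max_i F_X(X_i)\le w_0\}\cap\{Z_0>-M\})\ge\pr(\tilde Y_0>y+M)\,(\pi_0-\pr(Z_0\le -M))$, after which your ordering ($y\to\infty$, then $M\to\infty$) gives $\liminf\ge\lambda\pi_0>0$ as claimed. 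Your union-bound device with $w_0$ (using $\pr(F_X(X_i)>w_0)\le 1-w_0$) is a fine replacement for the paper's fixed-$\lambda$ device.

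The $\hNAAR(q,\theta)$ half has a genuine gap: you justify $\pr(\varepsilon^Y_0>y)/\pr(Y_0>y)\to c>0$ by "regular variation of both with the same index $\theta$", but that implication is false. Two tails can share the index $\theta$ and still differ by a slowly varying factor --- e.g.\ $y^{-\theta}$ versus $y^{-\theta}\log y$ --- in which case the ratio tends to $0$ and your lower bound collapses. What you actually need is $\limsup_{y\to\infty}\pr(Y_0>y)/\pr(\varepsilon^Y_0>y)<\infty$ (no convergence of the ratio is required, only a positive $\liminf$ of its reciprocal), and this is exactly the content of the paper's Proposition \ref{Proposition 3}: it is proved by dominating the NAAR recursion almost surely by a linear AR-type sequence, using $h(x)\le K+cx$ with $c<1$ for $h=f_1,g_1,g_2$, together with Lemma \ref{prop 3 lemma} and max-sum equivalence --- a real argument, not a formal consequence of a shared tail index. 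With that proposition supplied (and granting, as the paper implicitly does, that under $f_2\equiv 0$ the stationary $\mathbf{X}$ is measurable with respect to the $\varepsilon^X$-sequence, hence independent of $\varepsilon^Y_0$), your event $E_y=\{\varepsilon^Y_0>y\}$ does complete the NAAR case, and in fact more cleanly than the paper's own proof, which conditions sequentially on $\{X_0<\lambda,\dots,X_{i-1}<\lambda\}$ and runs a two-case analysis on the tail of $Z=g_1(Y_{-1})+g_2(X_{-q})$. As written, though, the tail-comparability step is asserted rather than proved, and it is the crux of this half of the theorem.
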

The proof can be found in \hyperlink{Proof of Theorem 2.2.}{Appendix \ref{AppendixB}}.
%\hyperref[Proof of Theorem 2.2.]{Appendix \ref{AppendixB}}. 
The primary step of the proof stems from Proposition \ref{TentoTheorem}, presented in Appendix \ref{chapter 5}. The core idea is that large sums of independent, regularly varying random variables tend to be driven by only a single large value. Consequently, if $Y_0$ is large, it could be attributed to the largeness of an $\varepsilon_i^Y$, which does not affect $\mathbf{X}$. 

Note that distinct notation is employed for the time series order (denoted as $q\in\mathbb{N}$) and the extremal delay (denoted as $p\in\mathbb{N}$). Although these two coefficients need not be equivalent, Theorem \ref{Theorem 2.1} prompts our primary focus on scenarios where $p\geq q$.

\begin{example}
Consider the bivariate time series $(\mathbf{X},\mathbf{Y})^\top$ described by the equations:
\begin{align*}
X_t&=0.5X_{t-1}+ \varepsilon_t^X;\qquad
Y_t=0.5Y_{t-1}+0.5X_{t-1}+ \varepsilon_t^Y,
\end{align*}
where $\varepsilon_t^X, \varepsilon_t^Y\overset{\text{iid}}{\sim}{\rm Pareto}(1,1)$, with a tail index $\theta=1$.
A causal representation of this $\hVAR(1,1)$ model is given by:
\begin{align*}
X_t=\sum_{i=0}^\infty \frac{1}{2^i}\varepsilon^X_{t-i}; \qquad
Y_t=\sum_{i=0}^\infty \frac{1}{2^i}\varepsilon^Y_{t-i}+\sum_{i=0}^\infty \frac{i}{2^i}\varepsilon^X_{t-i}.
\end{align*}
In this case, the order is $q=1$, and it is sufficient to take only $$\Gamma^{time, -0}_{\mathbf{X}\to \mathbf{Y}}(1)=\lim_{u\to 1^-}\E[F_Y(Y_1)\mid F_X(X_0)>u],$$ as discussed in Subsection \ref{optimal lag section}. Let us give some vague computation of this coefficient. From Theorem \ref{Theorem 2.1}, we know that $\Gamma^{time}_{\mathbf{X}\to \mathbf{Y}}(1)=1$. For the other direction, rewrite 
\begin{equation*}
\begin{split}
&\lim_{u\to 1^-}\E[F_X(X_1)\mid F_Y(Y_0)>u]= \lim_{v\to \infty}\E[F_X(X_1)\mid \sum_{i=0}^\infty \frac{1}{2^i}\varepsilon^Y_{-i}+\sum_{i=0}^\infty \frac{i}{2^i}\varepsilon^X_{-i}>v].
\end{split}
\end{equation*}
First, note the following relations (the first follows from the independence and the second from  Lemma \ref{dva} in Appendix \ref{chapter 5}):  
\begin{align*}
&\lim_{v\to \infty}\E[F_X(X_1)\mid \sum_{i=0}^\infty \frac{1}{2^i}\varepsilon^Y_{-i}>v]=\E[F_X(X_1)]=1/2,\\
&\lim_{v\to \infty}\E[F_X(X_1)\mid \sum_{i=0}^\infty \frac{i}{2^i}\varepsilon^X_{-i}>v]=1. 
\end{align*}
Furthermore, we know that $\pr(X_1<\lambda\mid \sum_{i=0}^\infty \frac{1}{2^i}\varepsilon^Y_{-i}+\sum_{i=0}^\infty \frac{i}{2^i}\varepsilon^X_{-i}>v)= \frac{\pr(X_1<\lambda)}{2}$ for every constant $\lambda\in\mathbb{R}$ (using Proposition \ref{TentoTheorem} \footnote{Note the identities $\sum_{i=0}^\infty \frac{i}{2^i}=2=\sum_{i=0}^\infty \frac{1}{2^i}$.}). This implies that, with a probability of $1/2$, $X_1|{F_Y(Y_0)>u}$ has the same distribution as non-conditional $X_1$, and with complementary probability, it tends to $\infty$ as $u\to 1^-$. Combining these results, we obtain: 
$$\Gamma^{time, -0}_{\mathbf{Y}\to \mathbf{X}}(1) = \lim_{u\to 1^-}\E[F_X(X_1)\mid F_Y(Y_0)>u]=\frac{1}{2}\cdot  \frac{1}{2} + \frac{1}{2}\cdot 1=\frac{3}{4}.$$ 
The order $q$ is usually unknown. If we take $p=2$, then the value of $$\Gamma^{time}_{\mathbf{Y}\to \mathbf{X}}(2)=\lim_{u\to 1^-}\E[\max\{F_X(X_0), F_X(X_1), F_X(X_2)\}\mid F_Y(Y_0)>u]$$ will be slightly larger than $\frac{3}{4}$. More precisely, it will be equal to $\frac{1}{2}\cdot  \E[\max\{F_X(X_0), F_X(X_1),F_X(X_2)\}] + \frac{1}{2}\cdot 1 $. The true value is around $0.80$, as determined through simulations and the use of computer software.

\end{example}

\section{Properties and extensions}
\label{chapter 3}

In this section, we exclusively focus on Heavy-tailed VAR models. While we believe that similar results also apply to Heavy-tailed NAAR models, the formulation and proof of such results are beyond the scope of this paper.

\subsection{Real-valued coefficients}
\label{real-valued}

We discuss the extension to include potentially negative coefficients (as indicated by the second assumption in Definition \ref{heavy-tailed-VAR}) and non-directly proportional dependencies. Until now, we have assumed that a large positive $\mathbf{X}$ causes large positive $\mathbf{Y}$. In other words, we have assumed that all coefficients in the $\hVAR$ model are non-negative.

The most straightforward modification arises when a large positive $\mathbf{X}$ causes large negative $\mathbf{Y}$ (where the gain for one causes a loss for others). In such cases, it suffices to consider the maxima of the pair $(\mathbf{X}, -\mathbf{Y})$. While we apply this simple modification in our application, it cannot be universally applied.

The concept behind extending the causal tail coefficient for time series involves utilizing the absolute values $|\mathbf{X}|$ and $|\mathbf{Y}|$ instead of $\mathbf{X}$ and $\mathbf{Y}$. However, to implement this extension, we need to restrict the family of $\VAR(q)$ models in a specific manner. Theorems \ref{Theorem 2.1} and \ref{Theorem 2.2.} do not hold if we allow arbitrary coefficients $\alpha_i$, $\beta_i$, $\gamma_i$, $\delta_i \in \mathbb{R}$, whether positive or negative, where $i=1, \dots, q$. The following example illustrates a problematic scenario where Theorems \ref{Theorem 2.1} and \ref{Theorem 2.2.} do not hold.

\begin{example}
\label{priklad s divnymi minusmi}
Consider a vector $(\mathbf{X}, \mathbf{Y})^\top$ following a $\VAR(2)$ model defined as:
\begin{align*}
X_t&=0.5X_{t-1}+ \varepsilon_t^X; \qquad
Y_t=X_{t-1} - 0.5 X_{t-2}+ \varepsilon_t^Y. 
\end{align*}
Its causal representation can be written as 
\begin{align*}
X_t&=\sum_{i=0}^\infty \frac{1}{2^i}\varepsilon^X_{t-i};\qquad
Y_t=\varepsilon_t^Y + \varepsilon^X_{t-1}. 
\end{align*}

Detecting extreme causal relationships can be challenging in this model. This is because, despite $\mathbf{X}$ being a cause of $\mathbf{Y}$, it cannot be assumed that the extreme in $X_{t-1}$ will necessarily result in an extreme in $Y_t$. Even if $X_{t-2}$ is large, leading to a large $X_{t-1}$, the same does not hold for $Y_t$. To address this, we will impose a restriction on our model in the following manner.
\end{example}

\begin{definition}[Extremal causal condition]
Consider the pair $(\mathbf{X},\mathbf{Y})^\top$, which follows the stable $\VAR(q)$ model, with its causal structure taking the form:
\begin{align*}
X_t=\sum_{i=0}^\infty a_i\varepsilon^X_{t-i}+\sum_{i=0}^\infty c_i\varepsilon^Y_{t-i};\qquad
Y_t=\sum_{i=0}^\infty b_i\varepsilon^Y_{t-i}+\sum_{i=0}^\infty d_i\varepsilon^X_{t-i}.
\end{align*}
Let $\mathbf{X}$ cause $\mathbf{Y}$. We say that $(\mathbf{X},\mathbf{Y})^\top$ satisfies an \textit{extremal causal condition}, if there exists an integer $r \leq q$ such that, for every $i\in \mathbb{N}\cup \{0\}$: $$a_i\neq 0 \implies d_{i+r} \neq 0.$$ 
\end{definition}

\begin{lemma}
\label{lemma Extremal causal condition}
The extremal causal condition holds in the $\hVAR(q, \theta)$ models (i.e., where the coefficients are non-negative) when $\mathbf{X}$ causes $\mathbf{Y}$. 
\end{lemma}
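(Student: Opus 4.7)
The plan is to exploit the fact that every coefficient entering the model is non-negative, which prevents any cancellation in the coefficients $a_i, b_i, c_i, d_i$ of the causal representation \eqref{causalVAR}. First, since $\mathbf{X}$ causes $\mathbf{Y}$ there exists some index with $\delta_i > 0$; let $r := \min\{k \in \{1,\dots,q\} : \delta_k > 0\}$. This will be the value of $r$ claimed by the extremal causal condition, so what must be shown is: for every $i \geq 0$ with $a_i > 0$, also $d_{i+r} > 0$.

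To extract $d_m$, I would rewrite the $\VAR(q)$ recursion in matrix form $\mathbf{Z}_t = \sum_{k=1}^{q} A_k \mathbf{Z}_{t-k} + \boldsymbol{\varepsilon}_t$ with $\mathbf{Z}_t = (X_t, Y_t)^\top$, and identify the causal expansion $\mathbf{Z}_t = \sum_{m \geq 0} \Phi_m \boldsymbol{\varepsilon}_{t-m}$. Substituting the latter into the former gives the standard recurrence $\Phi_0 = I$ and
\[
\Phi_m = \sum_{k=1}^{\min(m,q)} A_k \Phi_{m-k}, \qquad m \geq 1,
\]
with $(\Phi_m)_{1,1} = a_m$ and $(\Phi_m)_{2,1} = d_m$ (up to the row/column convention of Definition~\ref{VAR(q)}). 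Because each $A_k$ is entrywise non-negative under the $\hVAR(q,\theta)$ assumption, a straightforward induction on $m$ shows that every $\Phi_m$ is also entrywise non-negative.

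With non-negativity in hand, I would then isolate from the recurrence at $m = i+r$ the single summand corresponding to $k = r$; its contribution to the $(2,1)$-entry is precisely $\delta_r \cdot a_i + \beta_r \cdot d_i$, which is at least $\delta_r a_i$. All other summands in the sum defining $d_{i+r}$ are non-negative, so
\[
d_{i+r} \;\geq\; \delta_r \, a_i,
\]
and the assumption $a_i > 0$ together with $\delta_r > 0$ immediately yields $d_{i+r} > 0$. This is the required conclusion.

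The argument has no real analytic content; the only genuine obstacle is bookkeeping, namely verifying (i) that the matrix convention used in Definition~\ref{VAR(q)} lines up so that the $(2,1)$-entry of $\Phi_m$ indeed equals $d_m$, and (ii) that non-negativity does propagate through the MA recursion, which just amounts to inductively observing that the product and sum of non-negative matrices stay non-negative. Regular variation, summability of the $p$th powers, and stability of the VAR play no role here; the claim is a purely algebraic consequence of the sign assumption $\alpha_i, \beta_i, \gamma_i, \delta_i \geq 0$.
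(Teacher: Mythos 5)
Your proposal is correct and follows essentially the same route as the paper's proof: both arguments reduce to the single inequality $d_{i+r} \geq \delta_r a_i$, obtained by isolating the $\delta_r X_{t-r}$ contribution in the expansion of $Y$ and using non-negativity of all coefficients to discard the remaining (non-negative) terms. The paper does this in one line by substituting the causal representation of $X_0$ into $Y_p = \delta_p X_0 + \dots$, while your matrix recurrence $\Phi_m = \sum_{k} A_k \Phi_{m-k}$ is merely a more explicit bookkeeping of the same computation.
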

The proof can be found in \hyperlink{proof of lemma Extremal causal condition}{Appendix \ref{AppendixB}}. Clearly, the aforementioned condition holds in models where $d_i \neq 0$ for all $i > 0$.

The extremal causal condition embodies the following concept: Take $k\geq r$. If $\varepsilon_t^X$ is \say{extreme}, this extreme event will also influence $Y_{t+k}$. This implication is in particular true if some of the coefficients are negative. If $\varepsilon_t^X$ is \say{extremely positive}, this implies that $Y_{t+k}$ will be \say{extremely  negative}. Nevertheless, under the extremal causal condition, if  $\varepsilon_t^X$ is \say{extreme}, $Y_{t+k}$ will also be \say{extreme}. 

Formulating similar conditions for $\NAAR$ models is challenging, since we would require that an appropriate combination of \textit{functions} is non-zero. Due to the extremal causal condition, our focus in this section remains exclusively on $\VAR$ models, excluding $\NAAR$ models. The following theorem demonstrates that, even when negative coefficients are introduced, the principles outlined in Section \ref{chapter 2} remain applicable, thanks to the extremal causal condition.

\begin{theorem} 
\label{Absolute value theorem}
Let  $(\mathbf{X},\mathbf{Y})^\top$ be a time series which follows the $\hVAR(q, \theta)$ model, with possibly negative coefficients, satisfying the extremal causal condition. Moreover, let $\varepsilon_t^X, \varepsilon_t^Y$ have full support on $\mathbb{R}$, and $|\varepsilon_t^X|, |\varepsilon_t^Y|\sim \RV(\theta)$.  If $\mathbf{X}$ causes $\mathbf{Y}$, and $\mathbf{Y}$ does not cause $\mathbf{X}$, then $\Gamma^{time}_{|\mathbf{X}|\to |\mathbf{Y}|}(q)=1$, and $\Gamma^{time}_{|\mathbf{Y}|\to |\mathbf{X}|}(q)<1$. \end{theorem}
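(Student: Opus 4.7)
The plan is to parallel the proofs of Theorems \ref{Theorem 2.1} and \ref{Theorem 2.2.}, with two key adaptations: a two-sided single-big-jump principle for sums of regularly varying summands with possibly negative coefficients, and the extremal causal condition, which guarantees that a large $\mathbf{X}$-innovation still propagates into a large absolute value of $\mathbf{Y}$ at an appropriate lag. Because $\mathbf{Y}$ does not cause $\mathbf{X}$, we have $c_i \equiv 0$, so $X_t = \sum_{i\geq 0} a_i\varepsilon_{t-i}^X$ depends only on the $\mathbf{X}$-innovations, and under the summability hypothesis both $|X_t|$ and $|Y_t|$ are regularly varying with index $\theta$ by the standard max-sum equivalence applied to real-valued summands (the tails of $\sum_i a_i \varepsilon^X_{-i}$ and $\sum_i |a_i|\,|\varepsilon^X_{-i}|$ are asymptotically equivalent up to the factor $\sum_i |a_i|^\theta$).

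For the forward claim $\Gamma^{time}_{|\mathbf{X}|\to|\mathbf{Y}|}(q)=1$, I would condition on $\{|X_0|>u\}$ and let $u\to\infty$. The absolute-value analogue of Proposition \ref{TentoTheorem} concentrates the conditional law on the event $E_k$ that a single summand $|a_k\varepsilon_{-k}^X|$ is of order $u$ while every other summand is $o(u)$; on $E_k$ one has $|X_0|\sim|a_k\varepsilon_{-k}^X|$. By the extremal causal condition there exists a fixed $r\leq q$ with $d_{k+r}\neq 0$, and the \emph{same} innovation $\varepsilon_{-k}^X$ enters $Y_r = \sum_i b_i\varepsilon_{r-i}^Y + \sum_i d_i\varepsilon_{r-i}^X$ via the coefficient $d_{k+r}$. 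Conditionally on $E_k$, every other term in the expansion of $Y_r$ is $o(|\varepsilon_{-k}^X|)$, so their signed sum cannot cancel the dominant term, giving $|Y_r|\sim|d_{k+r}\varepsilon_{-k}^X|\to\infty$. Thus $F_{|Y|}(|Y_r|)\to 1$ in conditional probability; since $r\in\{0,\dots,q\}$, the maximum inside $\Gamma^{time}_{|\mathbf{X}|\to|\mathbf{Y}|}(q)$ tends to $1$, and bounded convergence transfers the limit to the expectation.

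For the reverse claim $\Gamma^{time}_{|\mathbf{Y}|\to|\mathbf{X}|}(q)<1$, I apply the two-sided max-sum equivalence to $Y_0=\sum_i b_i\varepsilon_{-i}^Y+\sum_i d_i\varepsilon_{-i}^X$: asymptotically, the event $\{|Y_0|>u\}$ decomposes into the disjoint events that the dominating summand is one of the $b_\ell\varepsilon_{-\ell}^Y$ or one of the $d_\ell\varepsilon_{-\ell}^X$, with limiting probabilities proportional to $\sum_i|b_i|^\theta$ and $\sum_i|d_i|^\theta$. The first is strictly positive (since $b_0=1$), so with a limiting probability $\pi>0$ the large value of $|Y_0|$ is driven by some $\varepsilon_{-\ell}^Y$, which is independent of the entire $\mathbf{X}$-process because $c_i\equiv 0$. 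Conditionally on this driving event, $(X_0,\dots,X_q)$ retains its unconditional distribution and contributes $\E[\max_{0\leq j\leq q}F_{|X|}(|X_j|)]<1$ to the limit (the strict inequality uses that this maximum of $q+1$ almost-surely continuous variates is strictly less than $1$ almost surely), whereas the complementary mass $1-\pi$ contributes at most $1$. Combining yields $\Gamma^{time}_{|\mathbf{Y}|\to|\mathbf{X}|}(q)\leq \pi\cdot\E[\max_j F_{|X|}(|X_j|)]+(1-\pi)<1$.

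The main obstacle I anticipate is rigorously justifying the symmetric single-big-jump principle for real-valued coefficients, i.e., ruling out that $|X_0|>u$ arises from a delicate cancellation of many moderately large opposite-sign terms. Under the summability assumption $\sum|a_i|^{\theta-\delta}<\infty$, however, conditional on $|\varepsilon_{-k}^X|$ being of exact order $u$, the remainder has tail proportional to $(\sum_{i\neq k}|a_i|^\theta)\pr(|\varepsilon^X|>u)$ and is thus $o_p(u)$; the same reasoning applied to $Y_r$ rules out cancellation of $d_{k+r}\varepsilon_{-k}^X$ by the other terms, which is precisely what is needed to conclude.
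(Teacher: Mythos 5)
Your proposal is correct and follows essentially the same route as the paper: both arguments rest on the single-big-jump principle for regularly varying sums (the paper's Proposition \ref{TentoTheorem} and its absolute-value consequence), both use the extremal causal condition to guarantee that the dominating innovation $\varepsilon^X_{-k}$ reappears in $Y_r$ with nonzero coefficient $d_{k+r}$ (making the exceptional set $\Phi$ empty in Lemma \ref{dva}), and both obtain $\Gamma^{time}_{|\mathbf{Y}|\to|\mathbf{X}|}(q)<1$ from the strictly positive limiting probability that the extreme of $|Y_0|$ is driven by a $\varepsilon^Y$-innovation independent of $\mathbf{X}$. The only difference is presentational: the paper reduces the signed case to its non-negative Proposition \ref{TentoTheorem} via the triangle inequality and the tail-balance equivalence $\pr\left(\left|\sum_{i=0}^\infty a_i\varepsilon^X_{-i}\right|>v\right)\sim\pr\left(\sum_{i=0}^\infty |a_i||\varepsilon^X_{-i}|>v\right)$, whereas you re-derive the dominating-summand decomposition inline, and the uniformity-over-$k$ issue you flag is exactly what the $n_0$-truncation in the paper's proof of Proposition \ref{TentoTheorem} handles.
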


The proof can be found in \hyperlink{Proof Absolute value theorem}{Appendix \ref{AppendixB}}.

\subsection{Common cause and different tail behavior}

Reichenbach's common cause principle \citep{Reichenbach} states that for every given pair of random variables $(\mathbf{X}, \mathbf{Y})$, precisely one of the following statements holds true: $\mathbf{X}$ causes $\mathbf{Y}$, $\mathbf{Y}$ causes $\mathbf{X}$, they are independent or there exists $\mathbf{Z}$ causing both $\mathbf{X}$ and $\mathbf{Y}$. The challenge lies in distinguishing between true causality and dependence stemming from a common cause. The subsequent theorem demonstrates that our methodology allows us to distinguish between causality and a correlation due to a common cause.
\begin{theorem}
\label{Common cause theorem}
Let $(\mathbf{X,Y,Z})^\top = ((X_t,Y_t,Z_t)^\top, t\in\mathbb{Z})$  follow the three-dimensional stable $\VAR(q)$ model, with iid regularly varying noise variables. Let $\mathbf{Z}$ be a common cause of both $\mathbf{X}$ and $\mathbf{Y}$, and neither $\mathbf{X}$ nor $\mathbf{Y}$ cause $\mathbf{Z}$. If $\mathbf{Y}$ does not cause $\mathbf{X}$, then $\Gamma^{time}_{\mathbf{Y}\to \mathbf{X}}(p)<1$ for all $p\in\mathbb{N}$. 
\end{theorem}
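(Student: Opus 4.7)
The plan is to mimic the proof strategy of Theorem~\ref{Theorem 2.2.}, adapting it to the three-dimensional setting where a common cause is present. The first step is to use the causal assumptions to simplify the MA-type causal representation. Since neither $\mathbf{X}$ nor $\mathbf{Y}$ cause $\mathbf{Z}$, the third component depends only on its own innovations: $Z_t = \sum_{i\ge 0} g_i \varepsilon^Z_{t-i}$. Since $\mathbf{Y}$ does not cause $\mathbf{X}$, the representation of $X_t$ contains no contribution from $\varepsilon^Y$, so that
$$X_t = \sum_{i\ge 0} a_i \varepsilon^X_{t-i} + \sum_{i\ge 0} e_i \varepsilon^Z_{t-i}.$$
For $Y_t$, we keep the general three-fold decomposition
$$Y_t = \sum_{i\ge 0} b_i \varepsilon^Y_{t-i} + \sum_{i\ge 0} d_i \varepsilon^X_{t-i} + \sum_{i\ge 0} f_i \varepsilon^Z_{t-i},$$
with $b_0 \neq 0$ since $\varepsilon^Y_t$ enters the defining equation for $Y_t$.

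Next, I would invoke the single-big-jump principle embodied by Proposition~\ref{TentoTheorem}: an upper extreme of $Y_0$ asymptotically arises from a single large noise term, and by the max-sum equivalence for regularly varying sums, the asymptotic mass carried by the three noise families is proportional to $\sum_i b_i^\theta$, $\sum_i d_i^\theta$, and $\sum_i f_i^\theta$, respectively. The crucial observation is then
$$\pi_Y := \frac{\sum_i b_i^\theta}{\sum_i b_i^\theta + \sum_i d_i^\theta + \sum_i f_i^\theta} > 0,$$
because $b_0 \neq 0$.

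Finally, I would combine the pieces. Conditionally on the event that the driving extreme of $Y_0$ lies in the $\varepsilon^Y$ family, the vector $(X_0,\dots,X_p)$ asymptotically retains its unconditional law, because $\varepsilon^Y$ is independent of both $\varepsilon^X$ and $\varepsilon^Z$ and does not appear in the representation of $X$. Hence on this event the conditional expectation of $\max\{F_X(X_0),\dots,F_X(X_p)\}$ converges to some constant $c_p < 1$, while on the complementary event the trivial bound by $1$ is used. Taking the limit,
$$\Gamma^{time}_{\mathbf{Y}\to \mathbf{X}}(p) \le \pi_Y c_p + (1-\pi_Y) \cdot 1 = 1 - \pi_Y(1-c_p) < 1.$$

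The main obstacle is to justify rigorously the mass-splitting step in the presence of the shared noise process $\varepsilon^Z$, which contributes correlated regularly varying terms to both $X$ and $Y$. One needs to argue that conditioning on the $\varepsilon^Y$-driven event asymptotically decouples $X$ from $Y$, so that the marginal law of $X$ is preserved in the limit. This is precisely where the assumption that neither $\mathbf{X}$ nor $\mathbf{Y}$ causes $\mathbf{Z}$ is indispensable, since it prevents any back-propagation of $\mathbf{Y}$'s extremes through $\mathbf{Z}$ into $\mathbf{X}$; without this assumption one could engineer examples analogous to Example~\ref{priklad s divnymi minusmi} in which the decomposition fails.
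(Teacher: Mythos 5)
Your proposal is correct and follows essentially the same route as the paper: the same causal representation exploiting the assumptions that $\mathbf{Y}\not\to\mathbf{X}$ and $\mathbf{X},\mathbf{Y}\not\to\mathbf{Z}$, followed by the single-big-jump mass splitting of Proposition~\ref{TentoTheorem}, with the positive $\varepsilon^Y$-mass fraction (your $\pi_Y>0$, guaranteed because $\varepsilon^Y_0$ enters $Y_0$ with nonzero coefficient) forcing $\Gamma^{time}_{\mathbf{Y}\to \mathbf{X}}(p)<1$. The obstacle you flag --- rigorously decoupling $\mathbf{X}$ from the conditioning event in the presence of the shared $\varepsilon^Z$ terms --- is closed in the paper by a one-line observation rather than a new argument: since all noise variables are iid, the $\varepsilon^X$- and $\varepsilon^Z$-sums appearing in the representations of $X_p$ and of $Y_0$ can each be rewritten as a single non-negative linear combination over one merged iid family, after which Proposition~\ref{TentoTheorem} applies verbatim with the $\varepsilon^Y$-sum playing the role of the independent sum (so $C>0$ and the limiting conditional probability that $\mathbf{X}$ stays small is strictly positive).
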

The proof can be found in \hyperlink{Proof of Common cause theorem}{Appendix \ref{AppendixB}}. Theorem \ref{Common cause theorem} makes an initial assumption that the tail indexes of $\mathbf{X}, \mathbf{Y},$ and $\mathbf{Z}$ are equal. However, it can be easily extended to the case when  $\mathbf{Z}$ has lighter tails (see discussion in Section \ref{section hidden confounder}). In practical scenarios, complete observation of all pertinent data is often unattainable. Nevertheless, Theorem \ref{Common cause theorem} remains valid even when we do not observe the common cause. However, the common cause still needs to fulfill the condition that noise is regularly varying with not heavier tails than those of $\mathbf{X}$ and $\mathbf{Y}$. We can not check this assumption in practice. 

\begin{example}\label{Example3}
Let $(\mathbf{X,Y,Z})^\top$ follow the three-dimensional $\VAR(1)$ model, specified by
\begin{align*}
Z_t&=0.5Z_{t-1}+\varepsilon_t^Z,\\
X_t&=0.5X_{t-1}+0.5Z_{t-1} + \varepsilon_t^X,\\
Y_t&=0.5Y_{t-1}+0.5Z_{t-1} +0.5X_{t-1} + \varepsilon_t^Y,
\end{align*}
where $\varepsilon_t^X, \varepsilon_t^Y\overset{\text{iid}}{\sim}{\rm Pareto}(2,2)$ and $\varepsilon_t^Z \overset{\text{iid}}{\sim}{\rm Pareto}(1,1)$ (i.e., $\varepsilon_t^Z$ has a heavier tail than $\varepsilon_t^X, \varepsilon_t^Y$). Then $\Gamma^{time}_{\mathbf{X}\to \mathbf{Y}}(1)=\Gamma^{time}_{\mathbf{Y}\to \mathbf{X}}(1)=1$ \footnote{We do not provide a rigorous proof of this equality, but such a proof follows similar steps as the proof of Theorem \ref{Theorem 2.1}. We can use the fact that $\pr(\varepsilon_t^Z + \varepsilon_t^X>v)\sim \pr(\varepsilon_t^Z>v)$ as $v\to\infty$ and appropriately modify Proposition \ref{TentoTheorem} such that $\varepsilon_t^Z$ is the leading term on both sides. The same results can be seen from simulations in Subsection \ref{section hidden confounder}.}, even though $\mathbf{Y}$ does not cause $\mathbf{X}$.

Even within the context of the unconfounded scenario (or when $\varepsilon_t^Z$ possesses tails that are lighter than those of $\varepsilon_t^X$ and $\varepsilon_t^Y$), a distinct challenge emerges when the tail behaviors of $\varepsilon_X$ and $\varepsilon_Y$ differ. Results from Section \ref{chapter 2} remain applicable if $\varepsilon_X$ has lighter tails than $\varepsilon_Y$. However, if $\varepsilon_X$  has heavier tails than $\varepsilon_Y$, then  $\Gamma^{time}_{\mathbf{X}\to \mathbf{Y}}(1)=\Gamma^{time}_{\mathbf{Y}\to \mathbf{X}}(1)=1$ and we can not distinguish between the cause and the effect. A more detailed discussion can be found in Subsection \ref{section hidden confounder}.
\end{example}

\subsection{Estimating the extremal delay $p$}
\label{optimal lag section}

Up to this point, we have assumed prior knowledge of the exact order $q$ of our time series, with $p$ being set equal to $q$. However, what if this order is unknown? In cases where $p$ is too small, accurate causal relationships cannot be obtained (refer to Lemma \ref{minimal lag lemma} below). Conversely, as $p$ becomes larger, $\Gamma^{time}_{\mathbf{Y}\to \mathbf{X}}(p)$ approaches $1$, posing challenges for empirical inference.

One practical approach to address this challenge is to utilize the extremogram \citep{Extremogram}. Similar to the role of correlograms in classical cases, extremograms help us select a \say{reasonable} value for $p$ by examining plots.  However, we can examine the values of $\Gamma^{time}_{\mathbf{Y}\to \mathbf{ X}}(p)$ for a range of values for $p$. An illustrative example can be found in Subsection \ref{section Lag}.

Now, let's delve into another consideration: the problem of estimating the temporal synchronization between two time series $(\mathbf{X}, \mathbf{Y})^\top$, where $\mathbf{X}$ is the cause of $\mathbf{Y}$. The objective is to determine the time required for information originating from $\mathbf{X}$ to impact $\mathbf{Y}$. In the context of an intervention applied to $\mathbf{X}$, the question is: \textit{when} will this intervention influence $\mathbf{Y}$? A practical instance of this scenario might arise in economics, such as when dealing with two time series representing milk and cheese prices over time. Consider a situation where the government imposes a large tax increase on milk prices at a specific moment. In such a case, when can we expect to observe a subsequent rise in cheese prices? Mathematically, we aim to estimate a parameter known as the \textit{minimal delay}.

\begin{definition}[Minimal delay]
Let $(\mathbf{X},\mathbf{Y})^\top$ follow the stable $\VAR(q)$ model specified in
Definition \ref{VAR(q)}.
%\begin{align*}
%X_t&=\alpha_{1}X_{t-1}+\dots + \alpha_{q}X_{t-q} + \gamma_{1}Y_{t-1}+\dots %+\gamma_{q}Y_{t-q} + \varepsilon_t^X,\\
%Y_t&=\beta_{1}Y_{t-1}+\dots + \beta_{q}Y_{t-q} + \delta_{1}X_{t-1}+\dots %+\delta_{q}X_{t-q} + \varepsilon_t^Y.
%\end{align*}
We call $s\in\mathbb{N}$ the \textit{minimal delay}, if $\gamma_1=\dots = \gamma_{s-1} = \delta_1=\dots = \delta_{s-1}=0$ and either $\delta_s\neq 0$ or $\gamma_s\neq 0$. If such $s$ does not exist, we define the minimal delay as $+\infty$.

\end{definition}

The subsequent lemma establishes connections between the minimal delay and the causal tail coefficient for time series.

\begin{lemma}
\label{minimal lag lemma}
Let $(\mathbf{X},\mathbf{Y})^\top$ follow the $\hVAR(q, \theta)$ model, where $\mathbf{X}$ causes $\mathbf{Y}$. Let $s$ be the minimal delay. Then, $\Gamma^{time}_{\mathbf{X}\to \mathbf{Y}}(r)<1$ for all $r<s$, and $\Gamma^{time}_{\mathbf{X}\to \mathbf{Y}}(r)=1$ for all $r\geq s$. 
\end{lemma}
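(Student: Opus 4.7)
The strategy is to pass to the causal (moving-average) representation \eqref{causalVAR} of $(\mathbf{X},\mathbf{Y})^\top$ and exploit the single-big-jump principle encoded in Proposition~\ref{TentoTheorem}. First I would translate the minimal-delay hypothesis $\gamma_1=\dots=\gamma_{s-1}=\delta_1=\dots=\delta_{s-1}=0$ into a statement about the MA coefficients, namely $c_i=d_i=0$ for every $0\le i<s$ and $d_s\ne 0$ (the latter uses that $\mathbf{X}$ causes $\mathbf{Y}$). The reasoning is combinatorial: every directed path in the VAR dependence graph transmitting an $\varepsilon^Y$-innovation into $X_t$ (respectively an $\varepsilon^X$-innovation into $Y_t$) must traverse at least one of the vanishing cross-edges and hence has length at least $s$.

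For $r<s$ the plan is to exhibit a fixed, positive-probability event on which $X_0$ is extreme while $(Y_0,\ldots,Y_r)$ is not pushed up. Since $d_{t+k}=0$ whenever $t+k<s$, none of $Y_0,\ldots,Y_r$ depends on $\varepsilon^X_0$, whereas $a_0=1$ implies that $\varepsilon^X_0$ enters $X_0$ with nonzero coefficient. Proposition~\ref{TentoTheorem} then yields that, conditionally on $\{F_X(X_0)>u\}$ as $u\to 1^-$, the limiting probability that $\varepsilon^X_0$ is the dominant ``culprit'' equals
\[
p_0 \;=\; \frac{1}{\sum_{i=0}^\infty a_i^\theta + \sum_{i=0}^\infty c_i^\theta} \;>\; 0.
\]
On that event $(Y_0,\ldots,Y_r)$ retains its unconditional joint law by independence, giving
\[
\Gamma^{time}_{\mathbf{X}\to\mathbf{Y}}(r) \;\le\; p_0\, \mathbb{E}\!\left[\max_{0\le t\le r}F_Y(Y_t)\right] + (1-p_0) \;<\; 1,
\]
since $\max_{0\le t\le r}F_Y(Y_t)<1$ almost surely.

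For $r\ge s$, monotonicity in $r$ reduces the claim to verifying $\Gamma^{time}_{\mathbf{X}\to\mathbf{Y}}(s)=1$. Here the plan is to replay the proof of Theorem~\ref{Theorem 2.1} with the minimal delay $s$ in place of the order $q$: partition $\{F_X(X_0)>u\}$ by the dominant culprit innovation and exhibit, for each possibility, some $Y_t$ with $t\le s$ receiving it with a nonzero MA coefficient. For a recent culprit $\varepsilon^X_{-i}$ with $0\le i\le s$, the choice $t=s-i$ works because the coefficient of $\varepsilon^X_{-i}$ in $Y_{s-i}$ equals $d_s\ne 0$.

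The main obstacle is handling distant culprits $\varepsilon^X_{-i}$ with $i>s$ and $\varepsilon^Y_{-i}$ with $i\ge s$: one must argue that the non-negativity of the $\hVAR$ coefficients prevents cancellation in the recursions defining $d_k$ and $b_k$, keeping them nonzero for all sufficiently large $k$, and then aggregate the partition pieces using the summability assumptions $\sum_i a_i^{\theta-\delta}<\infty$ and $\sum_i c_i^{\theta-\delta}<\infty$. This is the technical core and parallels the corresponding bookkeeping step in the proof of Theorem~\ref{Theorem 2.1}.
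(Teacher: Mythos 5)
Your treatment of the case $r<s$ is essentially the paper's argument in different packaging. The paper shows $\lim_{v\to\infty}\pr(Y_{s-1}<\lambda\mid X_0>v)>0$ by noting $d_i=0$ for $i<s$, so $\varepsilon^X_0$ appears in the causal representation of $X_0$ (with coefficient $a_0=1$) but not in that of $Y_0,\dots,Y_{s-1}$, and then applying Proposition~\ref{TentoTheorem} with the single $\varepsilon^X_0$ playing the role of the independent ``second sum''; the resulting limiting constant $\frac{C+\sum_{i\in\Phi}b_i^\theta}{C+B}$ is exactly your culprit weight $p_0$. Your version, which tracks the whole vector $(Y_0,\dots,Y_r)$ rather than just $Y_{s-1}$, is if anything slightly more careful than the paper's shortcut. (One shared blind spot: your claim that minimality plus $\mathbf{X}\to\mathbf{Y}$ forces $d_s\neq 0$ in fact requires $\delta_s\neq 0$; if the minimal delay were realized only by $\gamma_s\neq 0$, then $d_s=\delta_s=0$. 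The paper's proof silently makes the same reading, so I do not count this against you.)

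The $r\ge s$ half, however, has a genuine gap, and it stems from misremembering how Theorem~\ref{Theorem 2.1} is proved. That proof does \emph{not} partition $\{F_X(X_0)>u\}$ by dominant culprit: it writes $Y_s=\delta_s X_0+R$, where $R$ is a combination of innovations with non-negative coefficients, bounds $\pr(Y_s>\lambda\mid X_0>v)\ge\pr(\delta_s v+R>\lambda\mid X_0>v)$, and drops the conditioning via the association inequality of Proposition~\ref{PrvaLemma}, so that $\pr(\delta_s v+R>\lambda)\to 1$; no regular variation, no partition, and no aggregation over culprits is needed. The paper's proof of the lemma is literally this with the minimal delay $s$ substituted for an arbitrary index with $\delta_r>0$. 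Your partition route, by contrast, rests on a claim that is false as stated: non-negativity does \emph{not} keep $d_k$ and $b_k$ nonzero for all sufficiently large $k$ --- take $X_t=\varepsilon^X_t$ and $Y_t=\delta_s X_{t-s}+\varepsilon^Y_t$, an $\hVAR$ model in which $d_k=0$ for every $k\neq s$ and $b_k=0$ for every $k\ge 1$, yet the lemma plainly holds. The correct fact (the mechanism behind Lemma~\ref{lemma Extremal causal condition}) is pointwise, not eventual: $d_{i+s}\ge\delta_s a_i$ and $b_{i+s}\ge\delta_s c_i$, so every innovation entering $X_0$'s representation enters $Y_s$'s with positive coefficient. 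Even granting that repair, your sketch still owes the uniform aggregation over infinitely many culprit events (the truncation bookkeeping carried out inside the proof of Proposition~\ref{TentoTheorem}), which you flag but do not supply. The cleanest fix is to abandon the partition for this half and argue as in Theorem~\ref{Theorem 2.1}.
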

The proof can be found in \hyperlink{Proof of minimal lag lemma}{Appendix \ref{AppendixB}}. 
An approach to estimating the minimal delay $s$ involves identifying the smallest value of $s$ for which $\Gamma^{time}_{\mathbf{X}\to \mathbf{Y}}(s)=1$.

Estimating the minimal delay $s$ through extreme values holds relevance across various applications. One such application involves the assessment of \textit{synchrony} between two time series. \textit{Synchrony}, as described in \citep{synchrony}, quantifies the interdependence of two time series, seeking to determine an optimal lag that aligns the time series most closely. Conventional methodologies encompass techniques like time-lagged cross-correlation or dynamic time warping. Utilizing the causal tail coefficient for time series can offer an avenue to proceed using extreme values, facilitating the conception of \textit{synchronization of extremes}. This may pave the way for future research and methodological developments.

\section{Estimations and simulations}
\label{chapter 4}

All the methodologies presented in this section have been implemented in the \textsf{R} programming language \citep{R}. The corresponding code can be accessed either in the supplementary package or through the online repository available at \url{https://github.com/jurobodik/Causality-in-extremes-of-time-series.git}.

\subsection{Non-parametric estimator}

We propose an estimator of the causal tail coefficient $\Gamma^{time}_{\mathbf{X}\to \mathbf{Y}}(p)$ based on a finite sample $(x_1, y_1)^\top, \dots, (x_n, y_n)^\top$. The estimator uses only the values of $y_i$ where the corresponding $x_i$ surpasses a given threshold.

\begin{definition}
\label{DEF}
We define
$$\hat\Gamma^{time}_{\mathbf{X}\to \mathbf{Y}}(p):=\frac{1}{k}\sum_{\substack{i\leq n-p \\ x_i\geq\tau_k^X}}\max\{\hat{F}_Y(y_i), \dots, \hat{F}_Y(y_{i+p})\}, $$ where $\tau_k^X=x_{(n-k+1)}$ is the $k$-th largest value of $x_1, \dots, x_n$, and $\hat{F}_Y(t)=\frac{1}{n}\sum_{j=1}^n \1(y_j\leq t)$. 
\end{definition}

The value $k$ signifies the number of extremes considered in the estimator. In the upcoming discussions, $k$ will be contingent upon $n$, leading us to employ the notation $k_n$ instead of $k$. A fundamental condition in extreme value theory is expressed as:
 \begin{equation}
 \label{k_deleno_n}
 k_n\to\infty, \; \frac{k_n}{n}\to 0, \quad \text{ as } n\to\infty.
 \end{equation}

The next theorem shows that such a statistic is \say{reasonable} by showing that it is asymptotically unbiased. This result doesn't necessitate any stringent model assumptions; it only relies on the assumption of a certain rate of convergence for the empirical distribution function. 

\begin{theorem}
\label{Theorem o asymptotic}
Let $(\mathbf{X},\mathbf{Y})^\top=((X_t,Y_t)^\top, t\in\mathbb{Z})$ be a stationary bivariate time series, whose marginal distributions are absolutely continuous with support on some neighborhood of infinity. Let $\Gamma^{time}_{\mathbf{X}\to \mathbf{Y}}(p)$ exist. Let $k_n$ satisfy (\ref{k_deleno_n}) and 
\begin{equation}
\label{zxc}
\frac{n}{k_n}P\left(\frac{n}{k_n} \sup_{x\in\mathbb{R}}|\hat{F}_X(x)-F_X(x)|>\delta\right)\overset{n\to\infty}{\longrightarrow}0, \,\,\, \forall \delta>0.
\end{equation}
Then, $\E \hat\Gamma^{time}_{\mathbf{X}\to \mathbf{Y}}(p)\overset{n\to\infty}{\to}\Gamma^{time}_{\mathbf{X}\to \mathbf{Y}}(p)$ \footnote{$\hat\Gamma^{time}_{\mathbf{X}\to \mathbf{Y}}(p)$ depends on $n$, although we omitted this index for clarification.}.
\end{theorem}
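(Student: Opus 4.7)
The plan is a three-step chain of approximations, each separating $\E\hat\Gamma^{time}_{\mathbf{X}\to\mathbf{Y}}(p)$ from $\Gamma^{time}_{\mathbf{X}\to\mathbf{Y}}(p)$ by a vanishing error. Throughout, set $Z_i := \max\{F_Y(Y_i), \dots, F_Y(Y_{i+p})\}$; by strict stationarity $(X_i, Z_i) \overset{d}{=} (X_0, Z_0)$ and $Z_i \in [0,1]$. Write $\hat\Gamma_n$ for the estimator of Definition~\ref{DEF}.

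\emph{Step 1: replace $\hat F_Y$ by $F_Y$.} Let $\tilde\Gamma_n$ be $\hat\Gamma_n$ with this substitution. Since $\max$ is coordinatewise $1$-Lipschitz,
$$|\hat\Gamma_n - \tilde\Gamma_n| \leq \sup_t|\hat F_Y(t) - F_Y(t)|,$$
which is bounded by $1$ and tends to $0$ in probability (by the analogue of (\ref{zxc}) for $Y$, or standard uniform convergence of the empirical CDF for stationary sequences). Dominated convergence gives $\E|\hat\Gamma_n - \tilde\Gamma_n| \to 0$.

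\emph{Step 2 (main step): replace $\tau_k^X$ by $q_n := F_X^{-1}(1 - k_n/n)$.} Define $\Gamma'_n := k_n^{-1} \sum_{i \leq n-p} Z_i \1(X_i \geq q_n)$. Since $Z_i \in [0,1]$, a direct count of disagreeing indicators gives $|\tilde\Gamma_n - \Gamma'_n| \leq N_n / k_n$, where $N_n = \#\{i : X_i \text{ between } \tau_k^X \text{ and } q_n\}$. From $\hat F_X(\tau_k^X) = 1 - k_n/n + O(1/n)$ and $\hat F_X(q_n) = F_X(q_n) + (\hat F_X - F_X)(q_n) = 1 - k_n/n + O(\sup_x|\hat F_X - F_X|)$, one obtains
$$N_n \leq 1 + n\,\sup_x|\hat F_X(x) - F_X(x)|, \qquad \frac{N_n}{k_n} \leq \frac{1}{k_n} + W_n, \quad W_n := \frac{n}{k_n}\sup_x|\hat F_X - F_X|.$$
Condition (\ref{zxc}) states $\pr(W_n > \delta) = o(k_n/n)$; combined with the deterministic bound $W_n \leq n/k_n$, splitting at level $\delta$ yields $\E W_n \leq \delta + (n/k_n)\pr(W_n > \delta) \to \delta$, so $\E W_n \to 0$, and thus $\E|\tilde\Gamma_n - \Gamma'_n| \to 0$.

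\emph{Step 3: compute $\E\Gamma'_n$ and pass to the limit.} By strict stationarity $\E[Z_i\1(X_i \geq q_n)] = \E[Z_0\1(X_0 \geq q_n)]$ for every $i$, and continuity of $F_X$ forces $\pr(X_0 \geq q_n) = k_n/n$, so
$$\E\Gamma'_n = \frac{n-p}{k_n}\,\E[Z_0 \1(X_0 \geq q_n)] = \frac{n-p}{n}\,\E\bigl[Z_0 \mid F_X(X_0) \geq 1 - k_n/n\bigr].$$
Since $(n-p)/n \to 1$ and $1 - k_n/n \to 1^-$, the assumed existence of the limit in (\ref{2.1}) gives $\E\Gamma'_n \to \Gamma^{time}_{\mathbf{X}\to\mathbf{Y}}(p)$. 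Combining Steps 1--3 concludes the proof.

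The principal obstacle is Step 2: $\tau_k^X$ is a high-order statistic, and the naive deterministic bound $N_n/k_n \leq n/k_n$ diverges. Condition (\ref{zxc}) is calibrated precisely so that $W_n$ converges to zero not only in probability but also in $L^1$, despite its deterministic upper bound $n/k_n$. This $L^1$ control of the threshold error is exactly what is needed to carry the approximation through the expectation rather than merely in probability.
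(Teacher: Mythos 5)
Your proof is correct, and it takes a genuinely different route through the key step than the paper does. The paper's proof first converts $\E \hat\Gamma^{time}_{\mathbf{X}\to \mathbf{Y}}(p)$ into a conditional expectation given the random event $\{X_1>X_{(n-k_n)}\}$, using the identity $\pr(\hat{F}_X(X_1)>1-\frac{k_n}{n})=\frac{k_n}{n}$, and then compares conditioning on $\{X_1>X_{(n-k_n)}\}$ with conditioning on $\{X_1>u_n\}$ (with $u_n$ the $(1-\frac{k_n}{n})$-quantile) by splitting the indicators into the three orderings of $X_1$, $u_n$, $X_{(n-k_n)}$ and killing the two mismatch terms with \eqref{zxc}. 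You instead stay at the level of the sum: you swap the random threshold $\tau_k^X$ for the deterministic quantile $q_n$, bound the number of flipped indicators by $1+n\sup_x|\hat{F}_X(x)-F_X(x)|$, and deduce $\E W_n\to 0$ for $W_n:=\frac{n}{k_n}\sup_x|\hat{F}_X(x)-F_X(x)|$ from \eqref{zxc} together with the trivial bound $W_n\leq \frac{n}{k_n}$ --- the same tension between the $\frac{n}{k_n}$ normalization and the uniform empirical error that the paper resolves in the $\varepsilon$--$\delta$ computation at the end of its proof. Your Step 3 is then an exact identity, $\E\Gamma'_n=\frac{n-p}{n}\E[Z_0\mid F_X(X_0)\geq 1-\frac{k_n}{n}]$ with $Z_0=\max\{F_Y(Y_0),\dots,F_Y(Y_p)\}$, which needs only continuity of $F_X$ and stationarity of the $(p+1)$-dimensional marginals. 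What your route buys: it avoids the paper's assertion that $\pr(X_1\leq X_{(k)})=\frac{k}{n}$ ``follows from the stationarity'', which is really a rank-uniformity (exchangeability-type) property that can fail for merely stationary series, since the rank of $X_1$ within the observation window need not be uniform; your argument requires only $\pr(X_0\geq q_n)=\frac{k_n}{n}$. It also treats the edge truncation $i\leq n-p$ explicitly through the factor $\frac{n-p}{n}$, which the paper glosses over when it invokes stationarity jointly with the sample-dependent functions $\hat{F}_X,\hat{F}_Y$. One caveat you share with the paper: the claim $\E\sup_t|\hat{F}_Y(t)-F_Y(t)|\to 0$ is not implied by the stated hypotheses, since \eqref{zxc} concerns $\hat{F}_X$ only; the paper asserts it ``from the assumptions'', and you at least flag that one needs either the analogue of \eqref{zxc} for $\mathbf{Y}$ or a Glivenko--Cantelli property for the stationary sequence (e.g., under ergodicity), so on this point you are at parity with the published argument.
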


\begin{consequence}\label{Consequence5}
Let $\mathbf{X}\to\mathbf{Y}$. Under the assumptions of Theorem~\ref{Theorem o asymptotic} and Theorem~\ref{Theorem 2.1}, the proposed estimator is consistent; that is,  $\hat\Gamma^{time}_{\mathbf{X}\to \mathbf{Y}}(p)\overset{P}{\to}\Gamma^{time}_{\mathbf{X}\to \mathbf{Y}}(p)$ as $n\to\infty$. 
\end{consequence}
The proofs of Theorem~\ref{Theorem o asymptotic} and Consequence~\ref{Consequence5} can be found in \hyperlink{Proof of asymptotic unbias}{Appendix \ref{AppendixB}}. 
To establish asymptotic properties of $\hat\Gamma^{time}_{\mathbf{X}\to \mathbf{Y}}(p)$, it is reasonable to assume the consistency of $\hat F_X$. However, this assumption is accompanied by an additional requirement for the rate of its convergence, as specified in condition \eqref{zxc}. While this condition is not overly restrictive, we believe that the condition \eqref{zxc} can be improved. For iid random variables, condition \eqref{zxc} holds if $k_n^2/n \to \infty$ (follows from the Dvoretzky--Kiefer--Wolfowitz inequality). The following lemma establishes that condition \eqref{zxc} is also fulfilled within a certain subset of autoregressive models.

 \begin{lemma}
 \label{lemma o concentration inequality}
 Let $\mathbf{X} = (X_t, t\in\mathbb{Z})$ has the form 
 $$
 X_t = \sum_{k=0}^\infty a_k \varepsilon_{t-k},
 $$
where $(\varepsilon_t, t \in \mathbb{Z})$ are iid random variables with density $f_\varepsilon$. Assume $|a_k|\leq \gamma k^{-\beta}$ for some $\beta>1$, $\gamma>0$ and all $k \in \mathbb{N}$.  Let $f^\star = \max(1, |f_\varepsilon|_{\infty}, |f'_{\varepsilon}|_{\infty}) < \infty$, where $|f|_{\infty} = \sup_{x\in\mathbb{R}}|f(x)|$ is the supremum norm. Assume $\E|\varepsilon_0|^q<\infty$ for $q>2$ and $\pr(|\varepsilon_0|>x) \leq L(\log x)^{-r_0}x^{-q}$ for some constants $L>0, r_0>1$ and for every $x>1$. If the sequence $(k_n)$ satisfies \eqref{k_deleno_n} and 
\begin{equation}\label{aaa}
\exists c>\max\left\{ \frac{1}{2},\frac{2}{1+q\beta}\right\}: \frac{k_n}{n^c}\to\infty,  \text{ as }n\to\infty,
\end{equation}
then the condition \eqref{zxc} is satisfied. 

 \end{lemma}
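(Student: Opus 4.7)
The plan is to verify condition~(\ref{zxc}) by a standard truncation-plus-concentration scheme for the empirical process of $(X_t)$: couple $(X_t)$ with an $m$-dependent truncation, apply a Bernstein-type bound after a block decomposition, and discretize the half-lines $\{(-\infty,x] : x\in\mathbb{R}\}$ using the smoothness of $F_X$ inherited from $f_\varepsilon$. First I would introduce $X_t^{(m)}:=\sum_{k=0}^m a_k\varepsilon_{t-k}$ and the remainder $R_t^{(m)} := X_t-X_t^{(m)} = \sum_{k>m}a_k\varepsilon_{t-k}$. Using $|a_k|\le\gamma k^{-\beta}$ with $\beta>1$ together with the refined tail bound $\pr(|\varepsilon_0|>x)\le L(\log x)^{-r_0}x^{-q}$ and a union bound over $t\in\{1,\dots,n\}$, I would show that for a suitably polynomially small $\eta_n$ one has $\pr(\max_{1\le t\le n}|R_t^{(m_n)}|>\eta_n) = o(k_n/n)$; the factor $(\log x)^{-r_0}$ with $r_0>1$ is what makes this union bound pay off. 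Because $f_\varepsilon$ is bounded, convolution implies that $F_X$ and each $F_{X^{(m)}}$ are Lipschitz with uniform constants, so on the good event one has $\sup_x|\hat F_X(x)-\hat F_{X^{(m_n)}}(x)|\le C\eta_n$, and a similar estimate bounds the bias $\sup_x|F_X-F_{X^{(m_n)}}|$. These two reductions leave a deviation problem for $\hat F_{X^{(m_n)}}$ around its expectation.

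For $|s-t|>m_n$ the variables $X_s^{(m_n)}$ and $X_t^{(m_n)}$ depend on disjoint sets of innovations and are therefore independent. Decomposing $\{1,\dots,n\}$ into alternating blocks of length $m_n$ isolates two collections of iid summands, so that for any fixed $x$ the standard Bernstein/Hoeffding inequality gives $\pr(|\hat F_{X^{(m_n)}}(x)-\E \hat F_{X^{(m_n)}}(x)|>\delta)\le 2\exp(-c\, n\delta^2/m_n)$ for some $c>0$. Using the uniform Lipschitz property of $\E\hat F_{X^{(m_n)}}$, the supremum over $x$ can be replaced by a maximum over a grid of size $O(n)$ up to an error of order $1/n$, and a union bound over the grid yields $\pr(\sup_x|\hat F_{X^{(m_n)}}(x)-\E\hat F_{X^{(m_n)}}(x)|>\delta)\le 2n\exp(-c\, n\delta^2/m_n)$. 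Plugging $\delta\asymp k_n/n$ into this inequality and multiplying by $n/k_n$ as required by (\ref{zxc}), the concentration contribution vanishes as soon as $k_n^2/(n m_n\log n)\to\infty$, while the truncation step above produces an error of order $m_n^{-(q\beta-1)}$ up to logarithms. Optimizing $m_n$ as a small power of $n$ to balance these two constraints reproduces exactly the threshold $c>\max\{1/2,\,2/(1+q\beta)\}$ appearing in (\ref{aaa}): the $1/2$ arises from the iid-like concentration bound, and the $2/(1+q\beta)$ from the concentration-versus-truncation trade-off.

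The main obstacle is the simultaneous tuning of $m_n$: it must grow fast enough that the coupling remainder $R_t^{(m_n)}$ is uniformly negligible over $t\le n$, yet slowly enough that the block decomposition still leaves sufficiently many effectively independent blocks to beat both the spatial union-bound factor $n$ and the factor $n/k_n$ in (\ref{zxc}). The logarithmic refinement $(\log x)^{-r_0}$ with $r_0>1$ in the tail of $\varepsilon_0$ is essential to absorb the $\log n$ factors produced by the two union bounds, and the assumption that $f'_\varepsilon$ (not only $f_\varepsilon$) is bounded is precisely what makes the grid-based discretization accurate enough not to worsen the final rate. Apart from this delicate balancing act the argument is standard, and in particular no specific regular variation property of the innovations is used, only the polynomial tail bound assumed in the lemma.
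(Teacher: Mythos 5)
Your overall architecture (truncate to an $m_n$-dependent sequence, block into independent subsequences, Hoeffding plus a grid, then balance against the truncation error) is the natural first attempt, but it is quantitatively too weak to yield the lemma under exactly \eqref{aaa}, and your final claim that the optimization ``reproduces exactly the threshold $c>\max\{1/2,2/(1+q\beta)\}$'' does not survive the computation. Write $\delta_n\asymp k_n/n\asymp n^{c-1}$ for the deviation level demanded by \eqref{zxc}. Your blocking step forces $m_n\lesssim n\delta_n^2/\log n\asymp n^{2c-1}/\log n$, so that $\exp(-c\,n\delta_n^2/m_n)$ beats the factor $n/k_n$ and the grid cardinality. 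Your truncation step, with a Nagaev-type bound $\pr(|R^{(m)}_t|>\eta)\lesssim \eta^{-q}\sum_{k>m}|a_k|^q\lesssim \eta^{-q}m^{1-q\beta}$ and a union bound over $t\leq n$, requires $n\,\delta_n^{-q}m_n^{1-q\beta}=o(k_n/n)$, i.e. $m_n\gtrsim n^{(2+q-c(1+q))/(q\beta-1)}$ (and the uncentred remainder mean, of order $m_n^{1-\beta}$, adds a further lower bound on $m_n$). Compatibility of the two constraints forces $(2+q-c(1+q))/(q\beta-1)<2c-1$, i.e. $c>(1+q+q\beta)/(2q\beta+q-1)$. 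This threshold is strictly larger than $\max\{1/2,\,2/(1+q\beta)\}$ for every $q>2$, $\beta>1$: for instance $q=3$, $\beta=2$ gives $5/7$, whereas the lemma only assumes $c>1/2$. So a single truncation scale proves a strictly weaker lemma, not this one.

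The paper does not carry out any such analysis: its proof is a two-line verification on top of Proposition 13 of \cite{Concentration_inequality}, which supplies the much sharper bound $\pr\left(n\sup_{x\in\mathbb{R}}|\hat{F}_X(x)-F_X(x)|/f^\star>z\right)\lesssim n\,z^{-q\beta}(\log z)^{-r_0}$, valid for all $z\geq \gamma\sqrt{n}(\log n)^\alpha$ with some $\alpha>1/2$. Choosing $z=f^\star k_n/\delta$, the admissibility restriction $z\gtrsim\sqrt{n}(\log n)^\alpha$ is where $c>1/2$ enters, and the requirement $\frac{n}{k_n}\cdot\frac{n}{k_n^{q\beta}}\to 0$ is where $c>2/(1+q\beta)$ enters. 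The decay $z^{-q\beta}$, with the exponent appearing as the \emph{product} $q\beta$, is precisely what your single-scale scheme cannot produce: your error term has the form $\eta^{-q}m^{1-q\beta}$ with $m$ capped by the blocking constraint, and converting $m$-decay into $z$-decay requires the multi-scale/martingale-type argument of the cited work. Two smaller inaccuracies in your rationale: boundedness of $f_\varepsilon$ alone already makes $F_X$ Lipschitz, so $|f'_\varepsilon|_\infty<\infty$ is not what rescues your grid step (it is consumed inside the cited proposition), and similarly the factor $(\log x)^{-r_0}$, $r_0>1$, is used within the cited inequality rather than to absorb your union-bound logarithms, which would in any case have to be beaten by polynomial slack.
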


The proof can be found in \hyperlink{proof o concentration inequality}{Appendix \ref{AppendixB}}. It is based on Proposition 13 in \cite{Concentration_inequality}. Several concentration bounds exist that describe the convergence speed of the empirical distribution functions, and different assumptions can be made to guarantee the validity of the condition \eqref{zxc}. According to Theorem 1.2 in \cite{kontorovich}, condition \eqref{zxc} holds for geometrically ergodic Markov chains. Regrettably, this result was originally presented solely for $\mathbb{N}$-valued random variables. The Dvoretzky--Kiefer--Wolfowitz inequality provides a similar outcome, but we did not come across a specific adaptation of this inequality for autoregressive time series.

Lemma \ref{lemma o concentration inequality} requires a finite second moment of $\varepsilon_i$, a technical condition that follows from the findings in \cite{Concentration_inequality}. In cases of heavy-tailed time series characterized by a tail index $\theta$, this condition necessitates $\theta>2$. Notably, the coefficient $\beta$ in Lemma \ref{lemma o concentration inequality} encapsulates the strength of dependence in the time series. Larger values of $\beta$ correspond to faster decay of dependence, thereby allowing for milder assumptions on the intermediate sequence $(k_n)$. Throughout the paper, we opt for the choice $k_n=\sqrt{n}$, a decision briefly explored in Subsection \ref{section Threshold}.

\subsection{Some insight using simulations}
\label{UHNJ}
We will simulate the behavior of the estimates $\hat\Gamma_{\mathbf{X}\to \mathbf{Y}}^{time}$ for a series of models. Initially, we employ the Monte Carlo principle to estimate the distributions of $\hat\Gamma_{\mathbf{X}\to \mathbf{Y}}^{time}$ and $\hat\Gamma_{\mathbf{Y}\to \mathbf{X}}^{time}$ for the following model.
\begin{model}
\label{Time series 1}
Let $(\mathbf{X},\mathbf{Y})^\top$ follow the $\VAR(2)$ model
\begin{align*}
X_t&=0.5X_{t-1}+ \varepsilon_t^X; \qquad
Y_t=0.5Y_{t-1}+ \delta X_{t-2}+\varepsilon_t^Y,
\end{align*}
where $\varepsilon_t^X, \varepsilon_t^Y$ are independent noise variables and $\delta\in\mathbb{R}$. 
\end{model}
Figure~\ref{histogram} presents the histograms of $\hat\Gamma_{\mathbf{X}\to \mathbf{Y}}^{time}(2)$ and $\hat\Gamma_{\mathbf{Y}\to \mathbf{X}}^{time}(2)$ from $1000$ simulated instances of time series, each with a length of $n=5000$, following Model 1. The parameter $\delta$ is set to $0.5$, and $\varepsilon_t^X$ and $\varepsilon_t^Y$ are drawn from the Cauchy distribution. Figure~\ref{histogram} illustrates that within the causal direction, the values of $\hat\Gamma_{\mathbf{X}\to \mathbf{Y}}^{time}$ predominantly fall within the range of $(0.9, 1)$. Conversely, in the non-causal direction, the values of $\hat\Gamma_{\mathbf{Y}\to \mathbf{X}}^{time}$ typically lie below 0.9. This underscores the asymmetry between the cause and the effect.

\begin{figure}[b]
\centering
\includegraphics[scale=0.7]{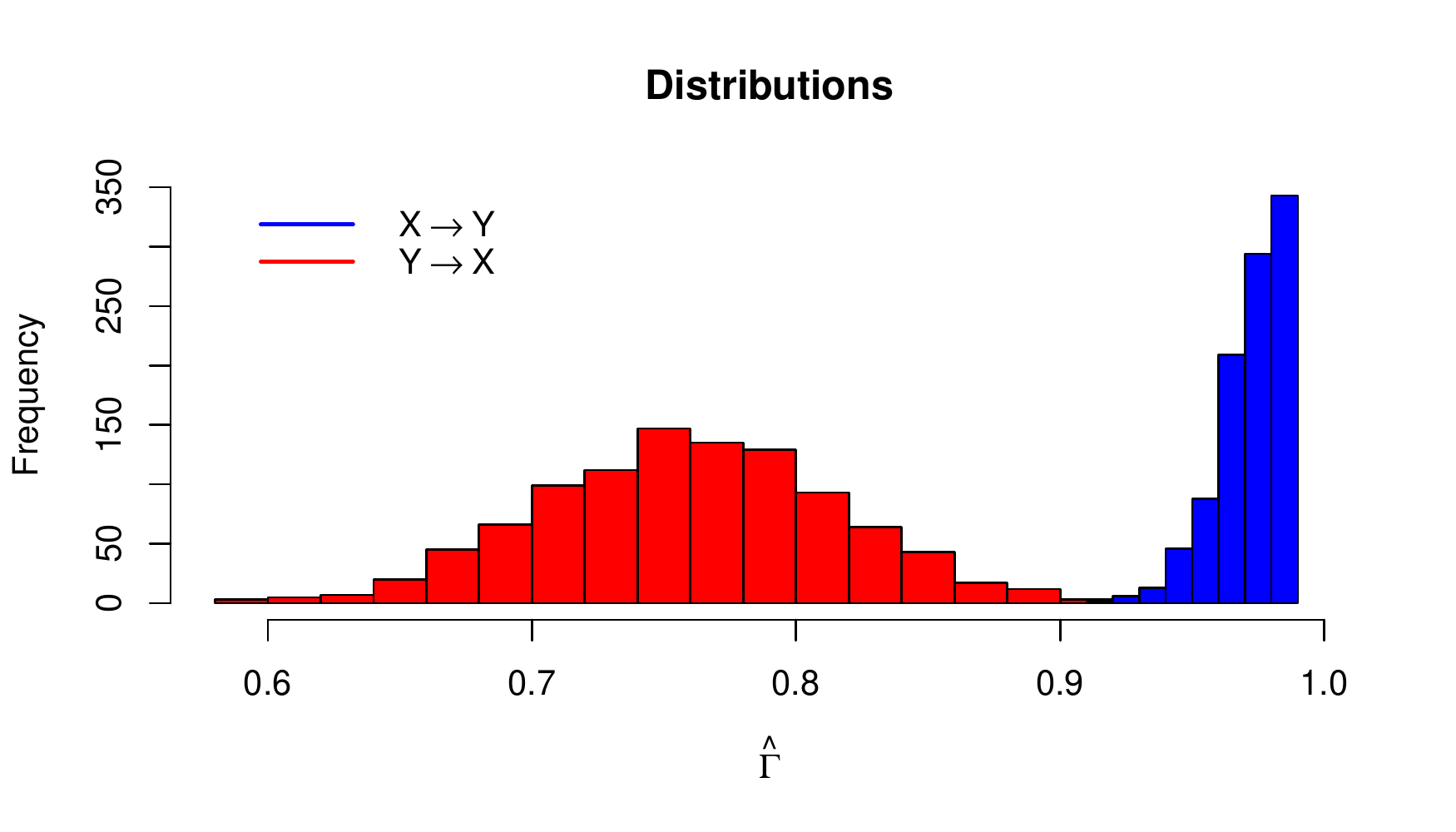}
\caption{The histograms provide an approximate visualization of the distributions of $\hat\Gamma_{\mathbf{X}\to \mathbf{Y}}^{time}(2)$ (in blue) and $\hat\Gamma_{\mathbf{Y}\to \mathbf{X}}^{time}(2)$ (in red) based on Model \ref{Time series 1} with a correct causal direction $\mathbf{X}\to \mathbf{Y}$ and a sample size $n=5000$. The threshold $k_n$ is chosen as $k_n=70=\floor{\sqrt{5000}}$.}
\label{histogram}
\end{figure}

We proceed by simulating $(\mathbf{X},\mathbf{Y})^\top$ according to Model \ref{Time series 1}. We explore three different values for the parameter $\delta$ ($0.1$, $0.5$, and $0.9$) and three distinct sample sizes ($n = 100$, $1000$, and $10000$). The random variables $\varepsilon_t^X$ and $\varepsilon_t^Y$ are generated from either the standard normal distribution (thus violating the RV assumption) or the standard Pareto distribution. For each value of $\delta$, noise distribution, and sample size, we calculate the estimators $\hat{\Gamma}^{time}_{\mathbf{X}\to \mathbf{Y}} := \hat{\Gamma}^{time}_{\mathbf{X}\to \mathbf{Y}}(2)$. This procedure is repeated $200$ times, and we compute the means and quantiles of these estimators. The summarized outcomes are presented in Table \ref{Table1}, where each cell corresponds to a specific model defined by $\delta$, noise distribution, and sample size.

\begin{table*}[]
\centering
\begin{tabular}{clll}
\multicolumn{4}{c}{\textit{\textbf{Errors with standard Pareto distributions}}}                                                                                                                                                                           \\ \hline
\multicolumn{1}{|c|}{}                              & \multicolumn{1}{c|}{$n=100$}                                     & \multicolumn{1}{c|}{$n=1000$}                                     & \multicolumn{1}{c|}{$n=10000$}                                     \\ \hline
\multicolumn{1}{|c|}{\multirow{2}{*}{$\delta=0.1$}} & \multicolumn{1}{l|}{$\hat{\Gamma}_{\mathbf{X}\to \mathbf{Y}}^{time}=0.80 \pm 0.11$} & \multicolumn{1}{l|}{$\hat{\Gamma}_{\mathbf{X}\to \mathbf{Y}}^{time}=0.94 \pm 0.04$}  & \multicolumn{1}{l|}{$\hat{\Gamma}_{\mathbf{X}\to \mathbf{Y}}^{time}=0.98 \pm 0.01$}   \\
\multicolumn{1}{|c|}{}                              & \multicolumn{1}{l|}{$\hat{\Gamma}_{\mathbf{Y}\to \mathbf{X}}^{time}=0.65 \pm 0.15$} & \multicolumn{1}{l|}{$\hat{\Gamma}_{\mathbf{Y}\to \mathbf{X}}^{time}=0.66 \pm 0.16$}  & \multicolumn{1}{l|}{$\hat{\Gamma}_{\mathbf{Y}\to \mathbf{X}}^{time}=0.65 \pm 0.12$}   \\ \hline
\multicolumn{1}{|c|}{\multirow{2}{*}{$\delta=0.5$}} & \multicolumn{1}{l|}{$\hat{\Gamma}_{\mathbf{X}\to \mathbf{Y}}^{time}=0.90\pm 0.05$}  & \multicolumn{1}{l|}{$\hat{\Gamma}_{\mathbf{X}\to \mathbf{Y}}^{time}=0.98 \pm 0.01$}  & \multicolumn{1}{l|}{$\hat{\Gamma}_{\mathbf{X}\to \mathbf{Y}}^{time}=0.994 \pm 0.00$} \\
\multicolumn{1}{|c|}{}                              & \multicolumn{1}{l|}{$\hat{\Gamma}_{\mathbf{Y}\to \mathbf{X}}^{time}=0.71 \pm 0.12$} & \multicolumn{1}{l|}{$\hat{\Gamma}_{\mathbf{Y}\to \mathbf{X}}^{time}=0.75 \pm 0.19$}  & \multicolumn{1}{l|}{$\hat{\Gamma}_{\mathbf{Y}\to \mathbf{X}}^{time}=0.79 \pm 0.11$}   \\ \hline
\multicolumn{1}{|c|}{\multirow{2}{*}{$\delta=0.9$}} & \multicolumn{1}{l|}{$\hat{\Gamma}_{\mathbf{X}\to \mathbf{Y}}^{time}=0.93 \pm 0.05$} & \multicolumn{1}{l|}{$\hat{\Gamma}_{\mathbf{X}\to \mathbf{Y}}^{time}=0.98 \pm 0.01$}  & \multicolumn{1}{l|}{$\hat{\Gamma}_{\mathbf{X}\to \mathbf{Y}}^{time}=0.996 \pm 0.00$} \\
\multicolumn{1}{|c|}{}                              & \multicolumn{1}{l|}{$\hat{\Gamma}_{\mathbf{Y}\to \mathbf{X}}^{time}=0.75 \pm 0.17$} & \multicolumn{1}{l|}{$\hat{\Gamma}_{\mathbf{Y}\to \mathbf{X}}^{time}=0.80 \pm 0.15$}   & \multicolumn{1}{l|}{$\hat{\Gamma}_{\mathbf{Y}\to \mathbf{X}}^{time}=0.84 \pm 0.10$}    \\ \hline
\multicolumn{4}{c}{\textit{\textbf{Errors with standard Gaussian distributions}}}                                                                                                                                                                         \\ \hline
\multicolumn{1}{|c|}{}                              & \multicolumn{1}{c|}{$n=100$}                                     & \multicolumn{1}{c|}{$n=1000$}                                     & \multicolumn{1}{c|}{$n=10000$}                                     \\ \hline
\multicolumn{1}{|c|}{\multirow{2}{*}{$\delta=0.1$}} & \multicolumn{1}{l|}{$\hat{\Gamma}_{\mathbf{X}\to \mathbf{Y}}^{time}=0.68 \pm 0.14$} & \multicolumn{1}{l|}{$\hat{\Gamma}_{\mathbf{X}\to \mathbf{Y}}^{time}=0.68\pm 0.10$}    & \multicolumn{1}{l|}{$\hat{\Gamma}_{\mathbf{X}\to \mathbf{Y}}^{time}=0.69 \pm 0.07$}   \\
\multicolumn{1}{|c|}{}                              & \multicolumn{1}{l|}{$\hat{\Gamma}_{\mathbf{Y}\to \mathbf{X}}^{time}=0.63 \pm 0.19$} & \multicolumn{1}{l|}{$\hat{\Gamma}_{\mathbf{Y}\to \mathbf{X}}^{time}=0.63\pm 0.13$}   & \multicolumn{1}{l|}{$\hat{\Gamma}_{\mathbf{Y}\to \mathbf{X}}^{time}=0.62\pm 0.08$}    \\ \hline
\multicolumn{1}{|c|}{\multirow{2}{*}{$\delta=0.5$}} & \multicolumn{1}{l|}{$\hat{\Gamma}_{\mathbf{X}\to \mathbf{Y}}^{time}=0.83 \pm 0.11$} & \multicolumn{1}{l|}{$\hat{\Gamma}_{\mathbf{X}\to \mathbf{Y}}^{time}=0.86\pm 0.06$}   & \multicolumn{1}{l|}{$\hat{\Gamma}_{\mathbf{X}\to \mathbf{Y}}^{time}=0.90\pm 0.03$}    \\
\multicolumn{1}{|c|}{}                              & \multicolumn{1}{l|}{$\hat{\Gamma}_{\mathbf{Y}\to \mathbf{X}}^{time}=0.64\pm 0.20$}   & \multicolumn{1}{l|}{$\hat{\Gamma}_{\mathbf{Y}\to \mathbf{X}}^{time}=0.65\pm 0.13$}   & \multicolumn{1}{l|}{$\hat{\Gamma}_{\mathbf{Y}\to \mathbf{X}}^{time}=0.66\pm 0.06$}    \\ \hline
\multicolumn{1}{|c|}{\multirow{2}{*}{$\delta=0.9$}} & \multicolumn{1}{l|}{$\hat{\Gamma}_{\mathbf{X}\to \mathbf{Y}}^{time}=0.88\pm 0.07$}  & \multicolumn{1}{l|}{$\hat{\Gamma}_{\mathbf{X}\to \mathbf{Y}}^{time}=0.93\pm 0.03$}   & \multicolumn{1}{l|}{$\hat{\Gamma}_{\mathbf{X}\to \mathbf{Y}}^{time}=0.96\pm 0.01$}    \\
\multicolumn{1}{|c|}{}                              & \multicolumn{1}{l|}{$\hat{\Gamma}_{\mathbf{Y}\to \mathbf{X}}^{time}=0.64\pm 0.19$}  & \multicolumn{1}{l|}{$\hat{\Gamma}_{\mathbf{Y}\to \mathbf{X}}^{time}=0.65\pm 0.13$} & \multicolumn{1}{l|}{$\hat{\Gamma}_{\mathbf{Y}\to \mathbf{X}}^{time}=0.66\pm 0.09$}    \\ \hline
\multicolumn{4}{c}{\textit{\textbf{$X$ with Pareto error, $Y$ with Gaussian error}}}                                                                                                                                                                      \\ \hline
\multicolumn{1}{|c|}{}                              & \multicolumn{1}{c|}{$n=100$}                                     & \multicolumn{1}{c|}{$n=1000$}                                     & \multicolumn{1}{c|}{$n=10000$}                                     \\ \hline
\multicolumn{1}{|c|}{\multirow{2}{*}{$\delta=0.5$}} & \multicolumn{1}{l|}{$\hat{\Gamma}_{\mathbf{X}\to \mathbf{Y}}^{time}=0.96\pm 0.02$}  & \multicolumn{1}{l|}{$\hat{\Gamma}_{\mathbf{X}\to \mathbf{Y}}^{time}=0.98\pm 0.01$} & \multicolumn{1}{l|}{$\hat{\Gamma}_{\mathbf{X}\to \mathbf{Y}}^{time}=0.997\pm 0.00$}  \\
\multicolumn{1}{|c|}{}                              & \multicolumn{1}{l|}{$\hat{\Gamma}_{\mathbf{Y}\to \mathbf{X}}^{time}=0.80\pm 0.11$}   & \multicolumn{1}{l|}{$\hat{\Gamma}_{\mathbf{Y}\to \mathbf{X}}^{time}=0.92\pm 0.04$}   & \multicolumn{1}{l|}{$\hat{\Gamma}_{\mathbf{Y}\to \mathbf{X}}^{time}=0.98\pm 0.01$}   \\ \hline
\multicolumn{4}{c}{\textit{\textbf{$X$ with Gaussian error, $Y$ with Pareto error}}}                                                                                                                                                                      \\ \hline
\multicolumn{1}{|c|}{}                              & \multicolumn{1}{c|}{$n=100$}                                     & \multicolumn{1}{c|}{$n=1000$}                                     & \multicolumn{1}{c|}{$n=10000$}                                     \\ \hline
\multicolumn{1}{|c|}{\multirow{2}{*}{$\delta=0.5$}} & \multicolumn{1}{l|}{$\hat{\Gamma}_{\mathbf{X}\to \mathbf{Y}}^{time}=0.65\pm 0.15$}  & \multicolumn{1}{l|}{$\hat{\Gamma}_{\mathbf{X}\to \mathbf{Y}}^{time}=0.67\pm 0.12$}    & \multicolumn{1}{l|}{$\hat{\Gamma}_{\mathbf{X}\to \mathbf{Y}}^{time}=0.68\pm 0.05$}    \\
\multicolumn{1}{|c|}{}                              & \multicolumn{1}{l|}{$\hat{\Gamma}_{\mathbf{Y}\to \mathbf{X}}^{time}=0.62\pm 0.20$}  & \multicolumn{1}{l|}{$\hat{\Gamma}_{\mathbf{Y}\to \mathbf{X}}^{time}=0.63\pm 0.13$}   & \multicolumn{1}{l|}{$\hat{\Gamma}_{\mathbf{Y}\to \mathbf{X}}^{time}=0.63\pm 0.08$}    \\ \hline
\end{tabular}
\caption{The results obtained from $200$ simulated time series following Model \ref{Time series 1} where $\mathbf{X}$ causes $\mathbf{Y}$. Each cell in the table corresponds to a distinct coefficient $\delta$, a specific number of data-points $n$, and a particular noise distribution. The value $\hat{\Gamma}_{\mathbf{X}\to \mathbf{Y}}^{time}=\cdot \pm \cdot$ represents the mean of all $200$ estimated coefficients $\hat{\Gamma}_{\mathbf{X}\to \mathbf{Y}}^{time}$, along with the difference between the $95\%$ empirical quantile and the mean, based on all $200$ simulations.  }
\label{Table1}
\end{table*}

\begin{Results}
The method's performance is unexpectedly robust even when the assumption of regular variation is violated. Although we lack theoretical justification for this observation, particularly with Gaussian noise where the true theoretical value is $\Gamma_{\mathbf{X}\to \mathbf{Y}}^{time}(p) = \Gamma_{\mathbf{Y}\to \mathbf{X}}^{time}(p) = 1$, empirical results suggest that $\hat\Gamma_{\mathbf{X}\to \mathbf{Y}}^{time}(p)$ converges faster than $\hat\Gamma_{\mathbf{Y}\to \mathbf{X}}^{time}(p)$. This sub-asymptotic behavior reveals an inherent asymmetry. Further investigation is required to determine whether this observation holds as a general result.

Conversely, when the tails of the noise variables differ between the cause and the effect, our method does not perform well. If $\varepsilon_t^X$ possesses heavier tails than $\varepsilon_t^Y$, for large $n$, both estimates tend to converge close to $1$. Alternatively, if $\varepsilon_t^X$ has lighter tails than $\varepsilon_t^Y$, both $\hat\Gamma_{\mathbf{X}\to \mathbf{Y}}^{time}$ and $\hat\Gamma_{\mathbf{Y}\to \mathbf{X}}^{time}$ tend to be very small, significantly deviating from $1$. Consequently, different tail behaviors of the noise variables appear to be the primary challenge. This issue is explored in greater detail in Subsection \ref{section hidden confounder}.
\end{Results}
\subsection{Choice of a threshold}
\label{section Threshold}

A common challenge in extreme value theory is the selection of an appropriate threshold. In our case, this corresponds to the choice of the parameter $k$ in Definition \ref{DEF}. There exists a trade-off between bias and variance; smaller values of $k$ lead to reduced bias (but increased variance), and larger values of $k$ result in higher bias (but lower variance). Unfortunately, there is no universally applicable method for threshold selection.

To provide insight into this behavior, consider Model \ref{Time series 1} with $n=1000$ and $\delta = 0.5$. Figure \ref{Threshold} illustrates the estimators $\hat\Gamma^{time}_{\mathbf{X}\to \mathbf{Y}}(2)$ and $\hat\Gamma^{time}_{\mathbf{Y}\to \mathbf{X}}(2)$ using various values of $k$.  The variance of $\hat\Gamma^{time}_{\mathbf{Y}\to \mathbf{X}}(2)$ for small $k$ is very large. Conversely, as $k$ increases, the bias of  $\hat\Gamma^{time}_{\mathbf{X}\to \mathbf{Y}}(2)$ will increase. 

Based on this example and several others, the choice $k=\sqrt{n}$ appears to be a reasonable option. It is crucial to emphasize that while this choice may be pragmatic, it might not necessarily be optimal.

By visually evaluating Figure \ref{Threshold}, it is possible to infer the values of $\hat\Gamma^{time}_{\mathbf{X}\to \mathbf{Y}}(2)$ and $\hat\Gamma^{time}_{\mathbf{Y}\to \mathbf{X}}(2)$. The observation seems to be that $\hat\Gamma^{time}_{\mathbf{X}\to \mathbf{Y}}(2)$ approaches one as $k$ decreases, while the same trend does not hold for $\hat\Gamma^{time}_{\mathbf{Y}\to \mathbf{X}}(2)$.

\begin{figure}[b]
\centering
\includegraphics[scale=0.43]{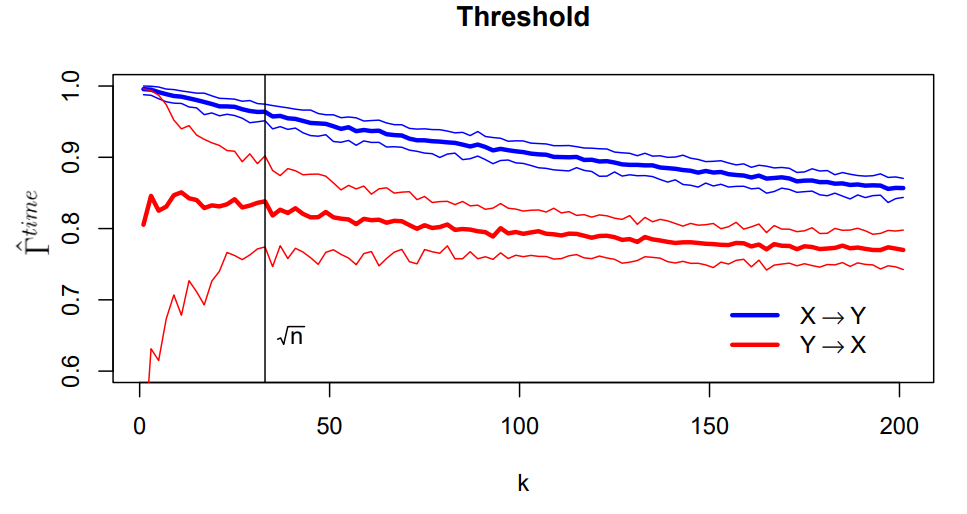}
\caption{The figure illustrates the behavior of the estimators $\hat\Gamma^{time}_{\mathbf{X}\to \mathbf{Y}}(2)$ (in blue) and $\hat\Gamma^{time}_{\mathbf{Y}\to \mathbf{X}}(2)$ (in red) under varying selections of the parameter $k$. The time series are generated based on Model \ref{Time series 1}, utilizing $n=1000$ and $\delta=0.5$. The thick line denotes the mean value across $100$ realizations, while the thin lines correspond to the $5\%$ and $95\%$ empirical pointwise quantiles. }
\label{Threshold}
\end{figure}

\subsection{Choice of the extremal delay}
\label{section Lag}

To provide an example of how $\Gamma^{time}_{\mathbf{X}\to \mathbf{Y}}(p)$ behaves for various selections of $p$, we consider the following model.

\begin{model}\label{Model2}
Let $(\mathbf{X},\mathbf{Y})^\top$ follow the $\hVAR(6,1)$ model 
\begin{align*}
X_t&=0.5X_{t-1}+ \varepsilon_t^X; \qquad
Y_t=0.5Y_{t-1}+ 0.5 X_{t-6}+\varepsilon_t^Y,
\end{align*}
where $\varepsilon_t^X, \varepsilon_t^Y\overset{\text{iid}}{\sim}{\rm Cauchy}$. 
\end{model}
Notice that the minimal delay is equal to $6$. Similar to the approach in Subsection \ref{section Threshold}, we simulate data from Model \ref{Model2} with a size of $n=1000$. Subsequently, we compute $\hat\Gamma^{time}_{\mathbf{X}\to \mathbf{Y}}(p)$ and $\hat\Gamma^{time}_{\mathbf{Y}\to \mathbf{X}}(p)$ for different values of $p$. This process is repeated 100 times. Figure \ref{Lag choice} shows the mean, $5\%$ and $95\%$ empirical quantiles of these estimates.

\begin{figure}[b]
\centering
\includegraphics[scale=0.47]{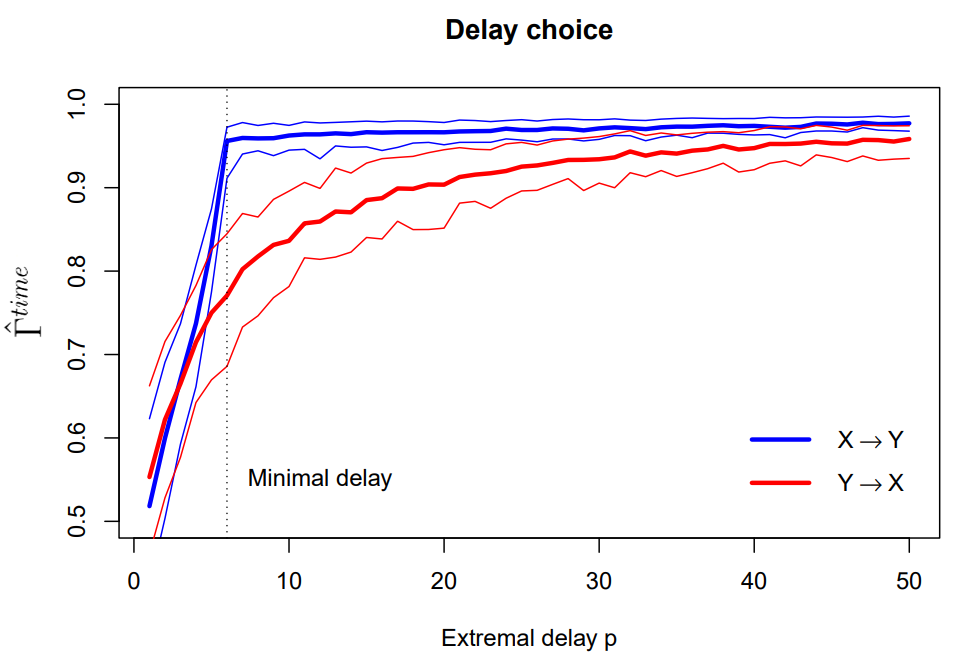}
\caption{The figure visualizes the behavior of the estimators $\hat\Gamma^{time}_{\mathbf{X}\to \mathbf{Y}}(p)$ (in blue) and $\hat\Gamma^{time}_{\mathbf{Y}\to \mathbf{X}}(p)$ (in red) for various selections of the extremal delay $p$. This is done using the  $\VAR(6)$ model (Model \ref{Model2}) with $n=1000$. The thick line represents the mean value across $100$ realizations, while the thin lines correspond to the $5\%$ and $95\%$ empirical quantiles of these estimates}
\label{Lag choice}
\end{figure}

The coefficient $\hat\Gamma^{time}_{\mathbf{X}\to \mathbf{Y}}(p)$ in the correct causal direction rises much faster than in the other direction until it reaches the \say{correct} minimal delay. Following this, the coefficient approaches $1$, remaining in close proximity even for larger $p$, aligning with theoretical expectations. Conversely, $\hat\Gamma^{time}_{\mathbf{Y}\to \mathbf{X}}(p)$ experiences a slower ascent, gradually converging towards $1$.

Nevertheless, practical scenarios often demand the selection of a specific $p$. In cases involving a small number of time series, like the application in Subsection \ref{Application}, it's feasible to calculate $\hat{\Gamma}^{time}_{\mathbf{X}\to\mathbf{Y}}(p)$ for several $p$ choices and create visualizations, as exemplified in Figure \ref{Space lag} further ahead. In instances involving a greater number of time series, such as the application in Subsection \ref{Hydrometeorology}, a pragmatic decision for $p$ should align with the maximum expected physical time delay within the system. The challenge of selecting a suitable time delay is not exclusive to the extreme value context and is encountered in non-extreme scenarios as well \citep{LagSelection,Runge}.

\subsection{Hidden confounder and different tail behavior of the noise}
\label{section hidden confounder}

The aforementioned examples have exclusively addressed scenarios where $\mathbf{X}$ causes $\mathbf{Y}$. However, it is important to investigate how the methodology fares when the correlation stems from a shared confounder. Furthermore, how does it respond when the tail indexes of the noise variables differ? To address these inquiries, we intend to analyze the following time series.

\begin{model}
\label{Timeseries3definicia}
Let $(\mathbf{X,Y,Z})^\top$ follow the $\VAR(3)$ model
\begin{align*}
Z_t&=0.5Z_{t-1}+\varepsilon_t^Z,\\
X_t&=0.5X_{t-1}+ 0.5Z_{t-2}+ \delta_YY_{t-3}+\varepsilon_t^X,\\
Y_t&=0.5Y_{t-1}+ 0.5Z_{t-1}+ \delta_XX_{t-3}+\varepsilon_t^Y,
\end{align*}
with independent noise variables $\varepsilon_t^X\sim {\rm t}_{\theta_{X}}, \varepsilon_t^Y\sim {\rm t}_{\theta_{Y}}, \varepsilon_t^Z\sim {\rm t}_{\theta_{Z}}$ (notation ${\rm t}_{\theta}$ corresponds to Student's $t$-distribution with $\theta$ degrees of freedom) and constants $\delta_X, \delta_Y\in\mathbb{R}$. 
\end{model}
The process $\mathbf{Z}$ represents an unobserved common cause, illustrating various scenarios: non-causal ($\delta_X=\delta_Y=0$), $\mathbf{X}$ causing $\mathbf{Y}$ ($\delta_X\neq 0$), $\mathbf{Y}$ causing $\mathbf{X}$ ($\delta_Y\neq 0$), and bidirectional causality ($\delta_X,\delta_Y\neq 0$, notation $\mathbf{X}\leftrightarrow \mathbf{Y}$). Additionally, the tail indexes $\theta_X$, $\theta_Y$, and $\theta_Z$ characterize the tail index of the time series. If $\theta_X$ is large, then the distribution of $\varepsilon_t^X$ is close to Gaussian. The case $\theta_X < \theta_Y$ indicates that $\varepsilon_t^X$ exhibits \textit{heavier} tails than $\varepsilon_t^Y$. 

Simulating time series according to Model \ref{Timeseries3definicia} with $n=1000$, $\delta_X, \delta_Y\in\{0,0.5, 1\}$, and various tail index combinations, we compute $\hat{\Gamma}^{time}_{\mathbf{X}\to \mathbf{Y}} := \hat{\Gamma}^{time}_{\mathbf{X}\to \mathbf{Y}}(3)$. This procedure is repeated 500 times to compute means and empirical quantiles of the estimators. The difference $\hat{\Gamma}^{time}_{\mathbf{X}\to \mathbf{Y}} - \hat{\Gamma}^{time}_{\mathbf{Y}\to \mathbf{X}}$ is also calculated alongside $95\%$ empirical quantiles. The outcomes are displayed in Table \ref{Table3}.

The results indicate that whenever either $\hat{\Gamma}^{time}_{\mathbf{X}\to \mathbf{Y}}$ or $\hat{\Gamma}^{time}_{\mathbf{Y}\to \mathbf{X}}$ is smaller than one, we can detect the correct causal direction. When $\mathbf{X}$ does not cause $\mathbf{Y}$ and vice versa, $\hat{\Gamma}^{time}_{\mathbf{X}\to \mathbf{Y}}$ and $\hat{\Gamma}^{time}_{\mathbf{Y}\to \mathbf{X}}$ are very similar and smaller than $1$.  When $\mathbf{X}$ causes $\mathbf{Y}$, $\hat{\Gamma}^{time}_{\mathbf{X}\to \mathbf{Y}}$ is generally much larger than $\hat{\Gamma}^{time}_{\mathbf{Y}\to \mathbf{X}}$.  When $\mathbf{X}\leftrightarrow \mathbf{Y}$, both $\hat{\Gamma}^{time}_{\mathbf{X}\to \mathbf{Y}}$ and $\hat{\Gamma}^{time}_{\mathbf{Y}\to \mathbf{X}}$ closely approach $1$.  However, if $\varepsilon_t^Z$ exhibits heavier tails than $\varepsilon_t^X$ and $\varepsilon_t^Y$, both $\hat{\Gamma}^{time}_{\mathbf{X}\to \mathbf{Y}}$ and $\hat{\Gamma}^{time}_{\mathbf{Y}\to \mathbf{X}}$ are in close proximity to $1$, regardless of the true causal relationship between $\mathbf{X}$ and $\mathbf{Y}$. Consequently, if  $\hat{\Gamma}^{time}_{\mathbf{X}\to \mathbf{Y}}$ and $\hat{\Gamma}^{time}_{\mathbf{Y}\to \mathbf{X}}$ are close to $1$, we can not distinguish whether this is due  $\mathbf{X}\leftrightarrow \mathbf{Y}$, or due to the presence of a hidden confounder with heavier tails. 

An interesting scenario arises when $\theta_X\neq\theta_Y$. Theory suggests that the method should work well as long as $\theta_X\geq\theta_Y$ (heavier tails of $\varepsilon_t^Y$) and should no longer work if  $\theta_X<\theta_Y$ (a supporting argument for this claim can be found in \hyperref[Claim]{Appendix \ref{AppendixB}},  Claim~\ref{Claim}). However, it's important to note that these results pertain to a population (limiting) context. For small sample sizes, the situation where $\theta_X<\theta_Y$ appears to \say{work well}, as evident from the blue values in Table~\ref{Table3} and Table~\ref{Table2}. There is an evident asymmetry between the cause and the effect, but it vanishes for large $n$. Both cases ($\theta_X<\theta_Y$ and  $\theta_X>\theta_Y$) can create problems for the estimation part, and further investigation is needed to understand better the method's behavior under different tail indices.

To summarize, the most challenging situation occurs when a heavier-tailed noise variable acts as a common cause. This situation necessitates careful consideration, particularly when both $\hat{\Gamma}^{time}_{\mathbf{X}\to \mathbf{Y}},\hat{\Gamma}^{time}_{\mathbf{Y}\to \mathbf{X}}\approx 1$. This ambiguity can be attributed to either true bidirectional causality or the influence of a hidden confounder.

\begin{table}[]
\centering
\begin{tabular}{cccc}
\multicolumn{4}{c}{\cellcolor[HTML]{EFEFEF}Confounded case with no causality between $\mathbf{X}$, $\mathbf{Y}$ 
  (case $\delta_X=0, \delta_Y=0$)}                                                                                                                                                                                           \\ \hline
\multicolumn{1}{|c|}{}                                                            & \multicolumn{1}{c|}{$\theta_X = \theta_Y =\theta_Z= 1$} & \multicolumn{1}{c|}{$\theta_X = 1, \theta_Y =1, \theta_Z= 2$} & \multicolumn{1}{c|}{$\theta_X = 2, \theta_Y =2, \theta_Z= 1$} \\ \hline
\multicolumn{1}{|c|}{$\hat{\Gamma}_{\mathbf{X}\to \mathbf{Y}}^{time}$}                              & \multicolumn{1}{c|}{$0.79\pm 0.09$}                     & \multicolumn{1}{c|}{$0.65\pm 0.07$}                           & \multicolumn{1}{c|}{$0.94\pm 0.03$}                           \\ \hline
\multicolumn{1}{|c|}{$\hat{\Gamma}_{\mathbf{Y}\to \mathbf{X}}^{time}$}                              & \multicolumn{1}{c|}{$0.81\pm 0.08$}                     & \multicolumn{1}{c|}{$0.65\pm 0.07$}                           & \multicolumn{1}{c|}{$0.95\pm 0.03$}                           \\ \hline
\multicolumn{1}{|c|}{$\hat{\Gamma}_{\mathbf{X}\to \mathbf{Y}}^{time}-\hat{\Gamma}_{\mathbf{Y}\to \mathbf{X}}^{time}$} & \multicolumn{1}{c|}{$-0.02 (-0.11, 0.7)$}              & \multicolumn{1}{c|}{$-0.01 (-0.10, 0.10)$}                    & \multicolumn{1}{c|}{$-0.02 (-0.5, 0.3)$}                     \\ \hline
\multicolumn{4}{c}{\cellcolor[HTML]{EFEFEF}Confounded case with $\mathbf{X}\to \mathbf{Y}$ (case $\delta_X=1, \delta_Y=0$)}                                                                                                                                                                                               \\ \hline
\multicolumn{1}{|c|}{}                                                            & \multicolumn{1}{c|}{$\theta_X = \theta_Y =\theta_Z= 1$} & \multicolumn{1}{c|}{$\theta_X = 1, \theta_Y =1, \theta_Z= 2$} & \multicolumn{1}{c|}{$\theta_X = 2, \theta_Y =2, \theta_Z= 1$} \\ \hline
\multicolumn{1}{|c|}{$\hat{\Gamma}_{\mathbf{X}\to \mathbf{Y}}^{time}$}                              & \multicolumn{1}{c|}{$0.97\pm 0.01$}                     & \multicolumn{1}{c|}{$0.97\pm 0.01$}                           & \multicolumn{1}{c|}{$0.97\pm 0.01$}                           \\ \hline
\multicolumn{1}{|c|}{$\hat{\Gamma}_{\mathbf{Y}\to \mathbf{X}}^{time}$}                              & \multicolumn{1}{c|}{$0.82\pm 0.07$}                     & \multicolumn{1}{c|}{$0.76\pm 0.7$}                           & \multicolumn{1}{c|}{$0.91\pm 0.05$}                           \\ \hline
\multicolumn{1}{|c|}{$\hat{\Gamma}_{\mathbf{X}\to \mathbf{Y}}^{time}-\hat{\Gamma}_{\mathbf{Y}\to \mathbf{X}}^{time}$} & \multicolumn{1}{c|}{$0.13 (0.05, 0.25)$}               & \multicolumn{1}{c|}{$0.20 (0.13, 0.26)$}                      & \multicolumn{1}{c|}{$0.06 (0.00, 0.13)$}                      \\ \hline
\multicolumn{4}{c}{\cellcolor[HTML]{EFEFEF}Confounded case with $\mathbf{X}\to \mathbf{Y}$ (case $\delta_X=1, \delta_Y=0$)}                                                                                                                                                                                               \\ \hline
\multicolumn{1}{|c|}{}                                                            & \multicolumn{1}{c|}{$\theta_X = \theta_Y =\theta_Z=9$}  & \multicolumn{1}{c|}{$\theta_X = 1, \theta_Y =2, \theta_Z= 3$} & \multicolumn{1}{c|}{$\theta_X = 2, \theta_Y =1, \theta_Z= 3$} \\ \hline
\multicolumn{1}{|c|}{$\hat{\Gamma}_{\mathbf{X}\to \mathbf{Y}}^{time}$}                              & \multicolumn{1}{c|}{$0.93\pm 0.02$}                     & \multicolumn{1}{c|}{\textcolor{blue}{$0.98\pm 0.01$}}                           & \multicolumn{1}{c|}{\textcolor{blue}{$0.87\pm 0.05$}}                           \\ \hline
\multicolumn{1}{|c|}{$\hat{\Gamma}_{\mathbf{Y}\to \mathbf{X}}^{time}$}                              & \multicolumn{1}{c|}{$0.78\pm 0.05$}                     & \multicolumn{1}{c|}{\textcolor{blue}{$0.79\pm 0.10$}}                           & \multicolumn{1}{c|}{\textcolor{blue}{$0.68\pm 0.05$}}                           \\ \hline
\multicolumn{1}{|c|}{$\hat{\Gamma}_{\mathbf{X}\to \mathbf{Y}}^{time}-\hat{\Gamma}_{\mathbf{Y}\to \mathbf{X}}^{time}$} & \multicolumn{1}{c|}{$0.10 (0.01, 0.20)$}                & \multicolumn{1}{c|}{$0.13 (0.00, 0.27)$}                      & \multicolumn{1}{c|}{$0.16 (-0.02, 0.32)$}                     \\ \hline
\multicolumn{4}{c}{\cellcolor[HTML]{EFEFEF}Confounded loop case where $\mathbf{X}\leftrightarrow \mathbf{Y}$ (case $\delta_X=0.5, \delta_Y=0.5$)}                                                                                                                                 \\ \hline
\multicolumn{1}{|c|}{}                                                            & \multicolumn{1}{c|}{$\theta_X = \theta_Y =\theta_Z=1$}  & \multicolumn{1}{c|}{$\theta_X = 1, \theta_Y =2, \theta_Z= 3$} & \multicolumn{1}{c|}{$\theta_X = 3, \theta_Y =2, \theta_Z= 1$} \\ \hline
\multicolumn{1}{|c|}{$\hat{\Gamma}_{\mathbf{X}\to \mathbf{Y}}^{time}$}                              & \multicolumn{1}{c|}{$0.97\pm 0.01$}                     & \multicolumn{1}{c|}{$0.98\pm 0.01$}                           & \multicolumn{1}{c|}{$0.98\pm 0.01$}                           \\ \hline
\multicolumn{1}{|c|}{$\hat{\Gamma}_{\mathbf{Y}\to \mathbf{X}}^{time}$}                              & \multicolumn{1}{c|}{$0.97\pm 0.01$}                     & \multicolumn{1}{c|}{$0.98\pm 0.01$}                           & \multicolumn{1}{c|}{$0.98\pm 0.01$}                           \\ \hline
\multicolumn{1}{|c|}{$\hat{\Gamma}_{\mathbf{X}\to \mathbf{Y}}^{time}-\hat{\Gamma}_{\mathbf{Y}\to \mathbf{X}}^{time}$} & \multicolumn{1}{c|}{$0.00 (-0.01, 0.01)$}               & \multicolumn{1}{c|}{$0.00 (-0.01, 0.01)$}                     & \multicolumn{1}{c|}{$0.00 (-0.01, 0.01)$}                     \\ \hline
\end{tabular}
\caption{The estimation of $\hat{\Gamma}^{time}(3)$ is performed on time series generated from Model \ref{Timeseries3definicia}. The tail indexes $\theta_X$, $\theta_Y$, and $\theta_Z$ correspond to the tail index of the respective time series, where a large $\theta_X$ signifies lighter tails and a small $\theta_X$ denotes heavy tails in $\varepsilon_X$. Each value of $\hat{\Gamma}^{time}_{\mathbf{X}\to \mathbf{Y}}=\cdot \pm \cdot$ represents the mean of the estimated coefficients $\hat{\Gamma}_{\mathbf{X}\to \mathbf{Y}}^{time}$, along with the difference between this mean and the $95\%$ empirical quantile out of all $500$ simulations.}
\label{Table3}
\end{table}

\subsection{Testing}
\label{SectionTesting}
One approach to establish a formal methodology for detecting the causal direction between two time series involves the utilization of a threshold. Specifically, we say that $\mathbf{X}$ causes $\mathbf{Y}$ if and only if  $\hat\Gamma^{time}_{\mathbf{X}\to \mathbf{Y}}(p)\geq \tau$, where $\tau=0.9$ or $0.95$. Our goal is to evaluate the hypothesis $H_0: \Gamma^{time}_{\mathbf{X}\to \mathbf{Y}}(p) < 1$ against the alternative $H_A: \Gamma^{time}_{\mathbf{X}\to \mathbf{Y}}(p) = 1$. An understanding of the distribution of $\hat\Gamma^{time}_{\mathbf{X}\to \mathbf{Y}}(p)$ is imperative. However, computing this distribution lies beyond the purview of our current study. We considered estimating confidence intervals using the block bootstrap technique (Section 5.4 in \cite{bootstrap} and the accompanying code in the supplementary package). However, we opted not to incorporate this methodology within the confines of this paper, as its performance in our specific scenario proved to be suboptimal. Future research can reveal a better-working methodology. Additionally, visual aids such as Figures \ref{Threshold} and \ref{Lag choice} can serve as valuable tools for enhancing understanding. Subsequently, we will delve into the scenario wherein we infer $\mathbf{X}\to \mathbf{Y}$ if and only if the value of $\hat\Gamma^{time}_{\mathbf{X}\to \mathbf{Y}}(p)$ meets or exceeds the designated threshold $\tau$.

In the following, we provide an insight into the process of selecting the appropriate threshold value $\tau$. This choice  should depend on the sample size $n$ and the extremal delay $p$. Opting for a small $\tau$ runs the risk of yielding erroneous conclusions, while selecting a large $\tau$ can curtail the statistical power. Analogous methodologies are commonly employed, particularly within the realm of extreme value theory. For instance, in contexts such as distinguishing between extremal dependence and independence \citep{asymptotic_dependence}. To develop an intuitive understanding, we will focus on Model  \ref{Time series 1} with $\delta=0.5$ and $\varepsilon_t^X, \varepsilon_t^Y\overset{\text{iid}}{\sim}{\rm Pareto}(1,1)$.  

We will observe two objectives as a function of $\tau$. The first is the percentage of correctly concluding that $\mathbf{X}\to \mathbf{Y}$ and the other is the percentage of correctly concluding that $\mathbf{Y}\not\to \mathbf{X}$ (that is, concluding that $\mathbf{Y}$ does not cause $\mathbf{X}$). 

Figures \ref{tauP} and \ref{tauN} illustrate the sensitivity of the results to the choice of the threshold $\tau=\tau(n,p)$ across a range of $(n,p)$ values. It is reasonable to expect that a larger value of $p$ should correspond to a larger value of $\tau$. To ensure that $\hat\Gamma^{time}_{\mathbf{X}\to \mathbf{Y}}(p)< \tau$ occurs in less than $5\%$ of cases, simulations suggest values of $\tau(500, 6)=0.90$ and $\tau(500, 11)=0.93$. For larger values of $p>11$, $\hat\Gamma^{time}_{\mathbf{X}\to \mathbf{Y}}(p)$ and $\hat\Gamma^{time}_{\mathbf{Y}\to \mathbf{X}}(p)$ exhibit substantial similarity.

It appears that as $n$ increases, the accuracy of $\hat\Gamma^{time}_{\mathbf{Y}\to \mathbf{X}}(p)$ improves. The $5\%$ significance level seems consistent across all $n$ values, with the main enhancement lying in the power to detect causality.

In conclusion, the appropriate selection of $\tau$ exhibits minimal variation with changes in $n$ and $p$. In all cases, opting for $\tau \approx 0.9$ appears reasonable, unless $p$ becomes exceedingly large.

\begin{figure}[]
\centering
\includegraphics[scale=0.5]{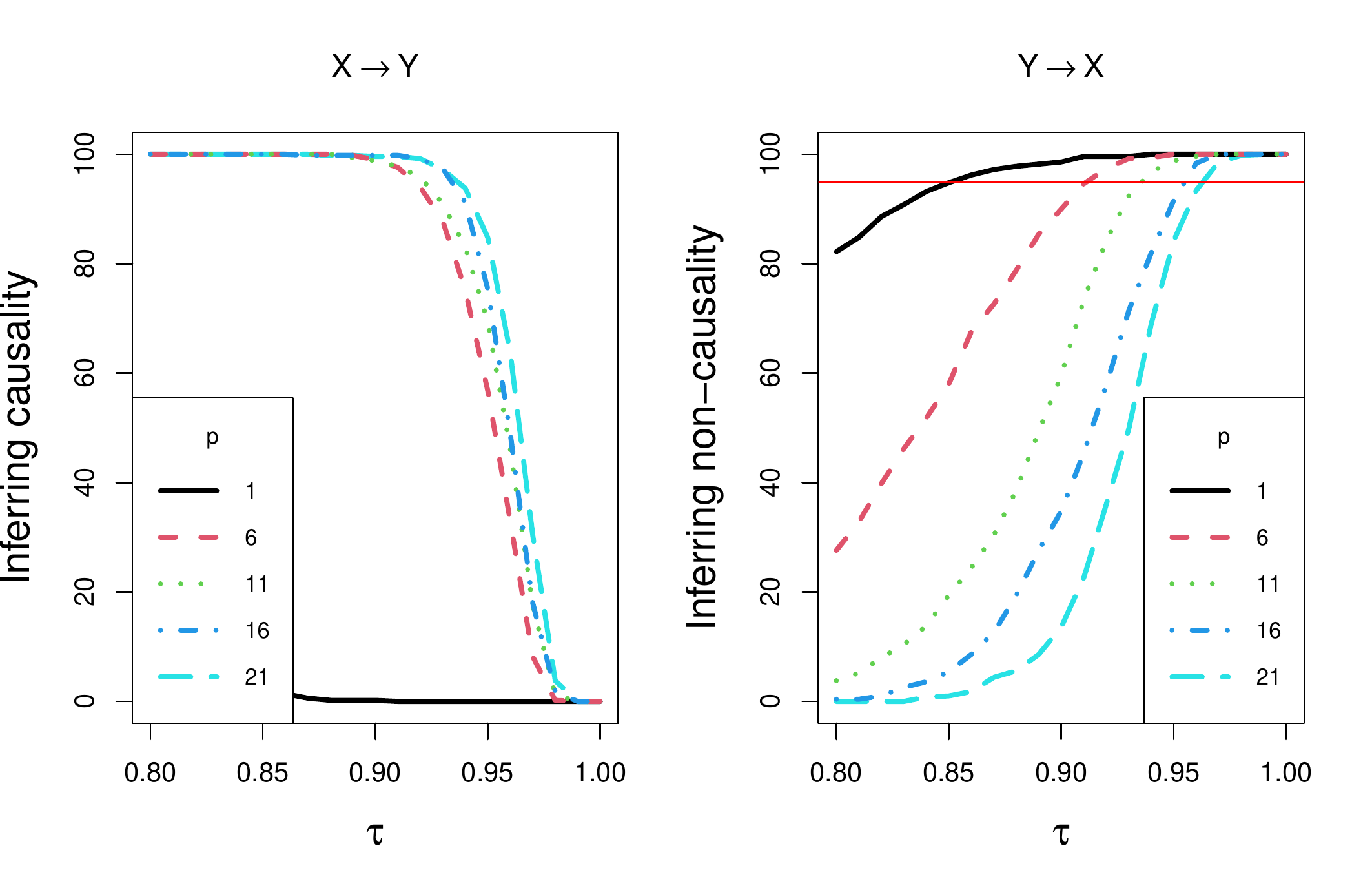}
\caption{The figures depict the performance of our methodology for various $\tau(500,p)$ selections in Model \ref{Timeseries3definicia}. In the left figure, we illustrate the percentage of instances where we accurately infer the relation $\mathbf{X}\to \mathbf{Y}$. The right figure represents the percentage of cases in which we correctly identify the absence of the relation $\mathbf{Y}\not\to \mathbf{X}$. If $\tau$ is large, we rarely find the correct relation $\mathbf{X}\to \mathbf{Y}$, but we almost always correctly find $\mathbf{Y}\not\to \mathbf{X}$. If $\tau$ is small, we almost always find the correct relation $\mathbf{X}\to \mathbf{Y}$, but we almost never correctly find $\mathbf{Y}\not\to \mathbf{X}$. A red horizontal line marks the $95\%$ success rate. Note that the scenario where $p=1$  signifies an erroneous selection of the extremal delay—specifically, a situation where $p$ is smaller than the minimum delay. }
\label{tauP}
\end{figure}

\begin{figure}[]
\centering
\includegraphics[scale=0.5]{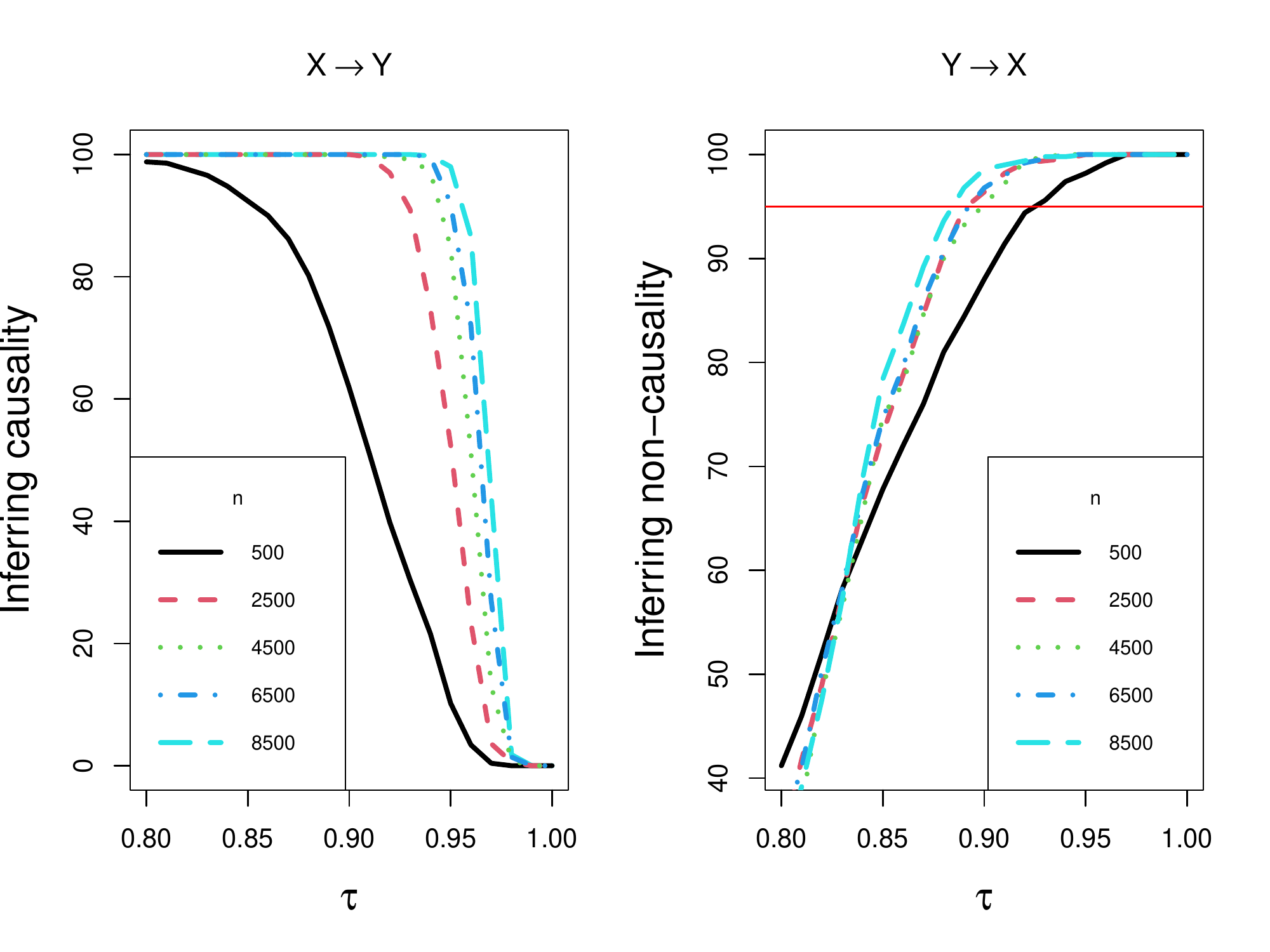}
\caption{Choice of the threshold as a function of $n$ and fixed $p=6$. The left figure represents the percentage of cases when we correctly inferred  $\mathbf{X}\to \mathbf{Y}$. The right figure represents the percentage of cases when we correctly inferred  $\mathbf{Y}\not\to \mathbf{X}$. A red horizontal line marks the $95\%$ success rate.}
\label{tauN}
\end{figure}

% Please add the following required packages to your document preamble:
% \usepackage{graphicx}
% \usepackage[normalem]{ulem}
% \useunder{\uline}{\ul}{}
\begin{table}[]
\centering
\resizebox{\textwidth}{!}{%
\begin{tabular}{|c|cccccccc|}
\hline
 &
  \multicolumn{8}{c|}{{\ul \textbf{VAR(2) model}}} \\
 &
  \multicolumn{2}{c|}{$\hat\Gamma^{time}_{\mathbf{X}\to \mathbf{Y}}\geq \tau=0.9$} &
  \multicolumn{2}{c|}{Granger test} &
  \multicolumn{2}{c|}{PCMCI} &
  \multicolumn{2}{c|}{LPCMCI} \\
 &
  $n=500$ &
  \multicolumn{1}{c|}{$n=5000$} &
  $n=500$ &
  \multicolumn{1}{c|}{$n=5000$} &
  $n=500$ &
  \multicolumn{1}{c|}{$n=5000$} &
  $n=500$ &
  $n=5000$ \\ \hline
$\mathbf{X}\to \mathbf{Y}$ &
  95\% &
  \multicolumn{1}{c|}{100\%} &
  96\% &
  \multicolumn{1}{c|}{97\%} &
  95\% &
  \multicolumn{1}{c|}{100\%} &
  100\% &
  100\% \\
$\mathbf{Y}\to \mathbf{X}$ &
  99\% &
  \multicolumn{1}{c|}{100\%} &
  97\% &
  \multicolumn{1}{c|}{98\%} &
  80\% &
  \multicolumn{1}{c|}{70\%} &
  97\% &
  97\% \\ \hline
 &
  \multicolumn{8}{c|}{{\ul \textbf{NAAR(3) model with hidden confounder}}} \\
 &
  \multicolumn{2}{c|}{$\hat\Gamma^{time}_{\mathbf{X}\to \mathbf{Y}}\geq \tau=0.9$} &
  \multicolumn{2}{c|}{Granger test} &
  \multicolumn{2}{c|}{PCMCI} &
  \multicolumn{2}{c|}{LPCMCI} \\
 &
  $n=500$ &
  \multicolumn{1}{c|}{$n=5000$} &
  $n=500$ &
  \multicolumn{1}{c|}{$n=5000$} &
  $n=500$ &
  \multicolumn{1}{c|}{$n=5000$} &
  $n=500$ &
  $n=5000$ \\ \hline
$\mathbf{X}\to \mathbf{Y}$ &
  97\% &
  \multicolumn{1}{c|}{100\%} &
  74\% &
  \multicolumn{1}{c|}{92\%} &
  88\% &
  \multicolumn{1}{c|}{100\%} &
  94\% &
  100\% \\
$\mathbf{Y}\to \mathbf{X}$ &
  89\% &
  \multicolumn{1}{c|}{100\%} &
  56\% &
  \multicolumn{1}{c|}{43\%} &
  5\% &
  \multicolumn{1}{c|}{0\%} &
  5\% &
  0\% \\ \hline
\end{tabular}%
}
\caption{A comparison of four methods for the causal inference on two time series models, a simple $\VAR(2)$ model and a more complex nonlinear model with a hidden common cause. The percentage obtained after 100 repetitions indicates how many times the estimation was correct.}
\label{Table2}
\end{table}

\subsection{Comparison with state-of-the-art methods}
We conduct a comparative analysis of our method with three classical techniques found in the literature. The initial approach is the classical Granger test utilizing default parameters and a significance level of $\alpha=0.05$. The second and third approaches encompass the PCMCI and LPCMCI methods, both employing default parameters and featuring a robust partial correlation independence test. For comprehensive insights into their implementations, please refer to the supplementary materials. Our method corresponds to estimating $\hat\Gamma^{time}_{\mathbf{X}\to \mathbf{Y}}(3)$ and concluding that $\mathbf{X}$ causes $\mathbf{Y}$ if and only if $\hat\Gamma^{time}_{\mathbf{X}\to \mathbf{Y}}(3)\geq \tau$ with the choice $\tau=0.9$. 

We examine two distinct models. The first is the VAR Model \ref{Time series 1}, characterized by $\delta=0.5$ and noise following a ${\rm Pareto}(1,1)$ distribution. Notably, this noise exhibits an infinite expected value, which can potentially pose challenges for classical methodologies. The second is more complex NAAR model with a hidden confounder, as defined in the subsequent description.  
\begin{model}
\label{Timeseries4definicia}
Let $(\mathbf{X,Y,Z})^\top$ follow the $\hNAAR(3,1)$ model
\begin{align*}
Z_t&=0.5Z_{t-1}+\varepsilon_t^Z,\\
X_t&=0.5X_{t-1}+ 0.5Z_{t-2} +\varepsilon_t^X,\\
Y_t&=0.5Y_{t-1}+ 0.5Z_{t-1}+ f_X(X_{t-3})+\varepsilon_t^Y,
\end{align*}
with independent noise variables $\varepsilon_t^X, \varepsilon_t^Y, 2\varepsilon_t^Z\sim {\rm Pareto}(1,1)$ and $f_X(x) = x^{\frac{3}{4}}\1(x>50)$. 
\end{model}

The function $f_X$ in Model \ref{Timeseries4definicia} represents a case when the causality presents itself only in the tail, not in the body of the distribution. 

We conduct simulations on the aforementioned time series, considering two data set sizes: $n=500$ and $n=5000$. Subsequently, employing the four methodologies, we compute the count of simulations (out of 100) where the causal relationships $\mathbf{X}\to \mathbf{Y}$ and $\mathbf{Y}\not\to \mathbf{X}$ are accurately inferred. The outcome percentages are presented in Table \ref{Table2}.

The results suggest that state-of-the-art methods behave well within the context of the simple model. Although these methods inherently assume finite expected values, it appears that the ${\rm Pareto}(1,1)$ noise distribution does not introduce significant challenges. Our method demonstrates satisfactory performance but exhibits slightly reduced power when compared with other approaches.

On the other hand, state-of-the-art methods encounter difficulties when confronted with the more complex model. PCMCI and LPCMCI frequently miscalculate the directionality, erroneously estimating that $\mathbf{Y}$ causes $\mathbf{X}$. This misjudgment arises from the presence of a hidden common cause $\mathbf{Z}$, which creates a spurious effect that even LPCMCI can not handle well. In contrast, our method successfully discerns genuine causality from causal connections attributed to a shared cause.
To conclude, the outcomes presented in Table \ref{Table2} imply that our method is particularly well-suited for large sample sizes, where it outperforms state-of-the-art techniques.

\section{Applications}
\label{APPLICATIONS}
We apply our methodology to two real datasets. The first one pertains to geomagnetic storms in the field of space weather science, while the second one is related to earth science and explores causal connections between hydro-meteorological phenomena. All data and a detailed \textsf{R} code are available in the supplementary package. 
\subsection{Space weather}
\label{Application}
In the subsequent sections, we address a problem within the realm of space weather studies. The term \say{space weather} refers to the variable conditions on the Sun and in space that can influence the performance of technology we use on Earth. Extreme space weather could potentially cause damage to critical infrastructures -- especially the electric grid. In order to protect people and systems that might be at risk from space weather effects, we need to understand the causes of space weather\footnote{Text taken from a webpage https://www.ready.gov/space-weather, accessed 18.5.2021.}.

\begin{figure*}[!]
\centering
\includegraphics[scale=0.65]{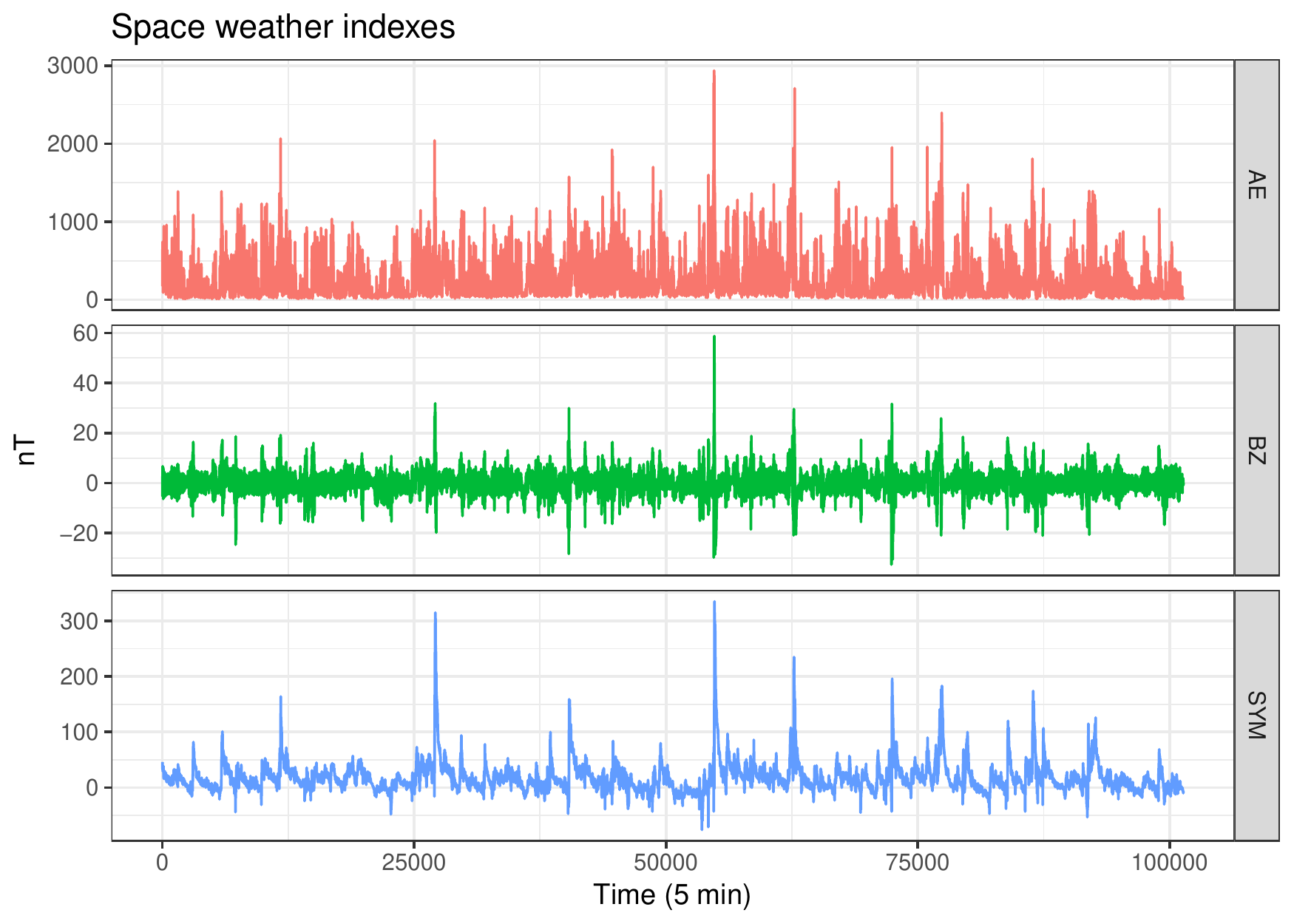}
\caption{Space weather. The first plot represents AE (substorm index), the second one BZ (vertical component of an interplanetary magnetic field) and the last one SYM (magnetic storm index). Data were measured in 5-minute intervals for the year 2000 by NASA. The unit of measurement is nanotesla (nT).}
\label{Space figure}
\end{figure*}

Geomagnetic storms and substorms serve as indicators of geomagnetic activity. A substorm manifests as a sudden intensification and heightened motion of auroral arcs, potentially leading to magnetic field disturbances in the auroral zones of up to 1000 nT (Tesla units). The origin of this geomagnetic activity lies within the Sun itself. More precisely, a substantial correlation exists between this geomagnetic activity, the solar wind (a flow of negatively charged particles emitted by the Sun), and the interplanetary magnetic field (which constitutes a portion of the solar magnetic field carried outward by the solar wind).

A fundamental challenge in this field revolves around the determination and prediction of specific characteristics: the magnetic storm index (SYM) and the substorm index (AE). It may seem that AE is a driving factor (cause) of SYM because the accumulation of successive substorms usually precedes the occurrence of magnetic storms. However, a recent article \citep{application} suggests otherwise. It proposes that a vertical component of the interplanetary magnetic field (BZ) serves as a shared trigger for both indices. Our approach involves applying our methodology to validate this finding and ascertain whether this causal influence becomes evident in extreme scenarios.

Our dataset comprises three distinct time series (SYM, AE, BZ), encompassing approximately $100\,000$ measurements, each captured at 5-minute intervals over the entire year 2000. These data, in addition to being available in the supplementary package, can also be accessed online through the NASA website\footnote{NASA webpage https://cdaweb.gsfc.nasa.gov, accessed 18.5.2021.}. A visual representation of the data is presented in Figure \ref{Space figure}.  From the nature of the data, we focus on analyzing extremes characterized by  extremely small SYM, extremely large AE, and extremely small BZ  (that is, we consider $-$SYM, $+$AE, $-$BZ and compare maxima). Given the inherent characteristics of the data, an appropriate delay will be smaller than $p=24$ ($2$ hours).  

First, we discuss whether the assumptions for our method are fulfilled. We estimate the tail indexes of our data\footnote{We employ the \textsf{HTailIndex} function from the \textsf{ExtremeRisks} package in \textsf{R} with variable $k=500$ \citep{Extreme_risks_package}. The confidence intervals are computed using the normal approximation from Theorem 3.2.5 in \cite{deHaan} and page 1288 in \cite{Drees}. Details can be found in  the corresponding \textsf{R} documentation.}. Results are the following: SYM has the estimated tail index $0.25 \,\,(0.015, 0.5)$,   AE has $0.18 \,\,(0.08, 0.28)$ and BZ has  $0.30 \,\,(0.12, 0.46)$.  
Therefore, the assumption of regular variation characterized by the same tail index appears reasonable. Furthermore, none of the confidence intervals encompass the zero value, although SYM comes quite close. This observation suggests that our time series can be deemed as regularly varying.
We also require a stationarity assumption of our time series, which seems to hold. It is believed that our variables do not contain a seasonal pattern or a trend (at least not in a horizon of a few years). The Dickey--Fuller test also suggests stationarity, although it is recognized that tests for stationarity aren't very dependable \citep{Dickey}. 
In summation, all the assumptions seem to be reasonably satisfied.

Lastly, we proceed to calculate the causal tail coefficient across varying extreme delays $p$ and a range of considered extremes $k$. The numerical outcomes are depicted in Figure \ref{Space lag}. These findings impart the following insights:

\begin{itemize}
\item BZ and SYM have strong asymmetry ($\hat{\Gamma}_{BZ\to SYM}^{time}(p)\approx 1$, and $\hat{\Gamma}_{SYM\to  BZ}^{time}(p)\ll 1$, see blue lines), 
\item BZ and AE have an asymmetry ($\hat{\Gamma}_{BZ\to AE}^{time}(p)\approx 1$, and $\hat{\Gamma}_{SYM\to  AE}^{time}(p)<1$, see green lines) 
\item SYM and AE have no asymmetry  ($\hat{\Gamma}_{AE\to SYM}^{time}(p),\hat{\Gamma}_{SYM\to  AE}^{time}(p)<1$ and both $\hat{\Gamma}_{SYM\to  AE}^{time}(p)$, $\hat{\Gamma}_{AE\to SYM}^{time}(p)$ are very similar, see brown lines). 
\end{itemize}

\begin{figure}[]
\centering
\includegraphics[scale=0.284]{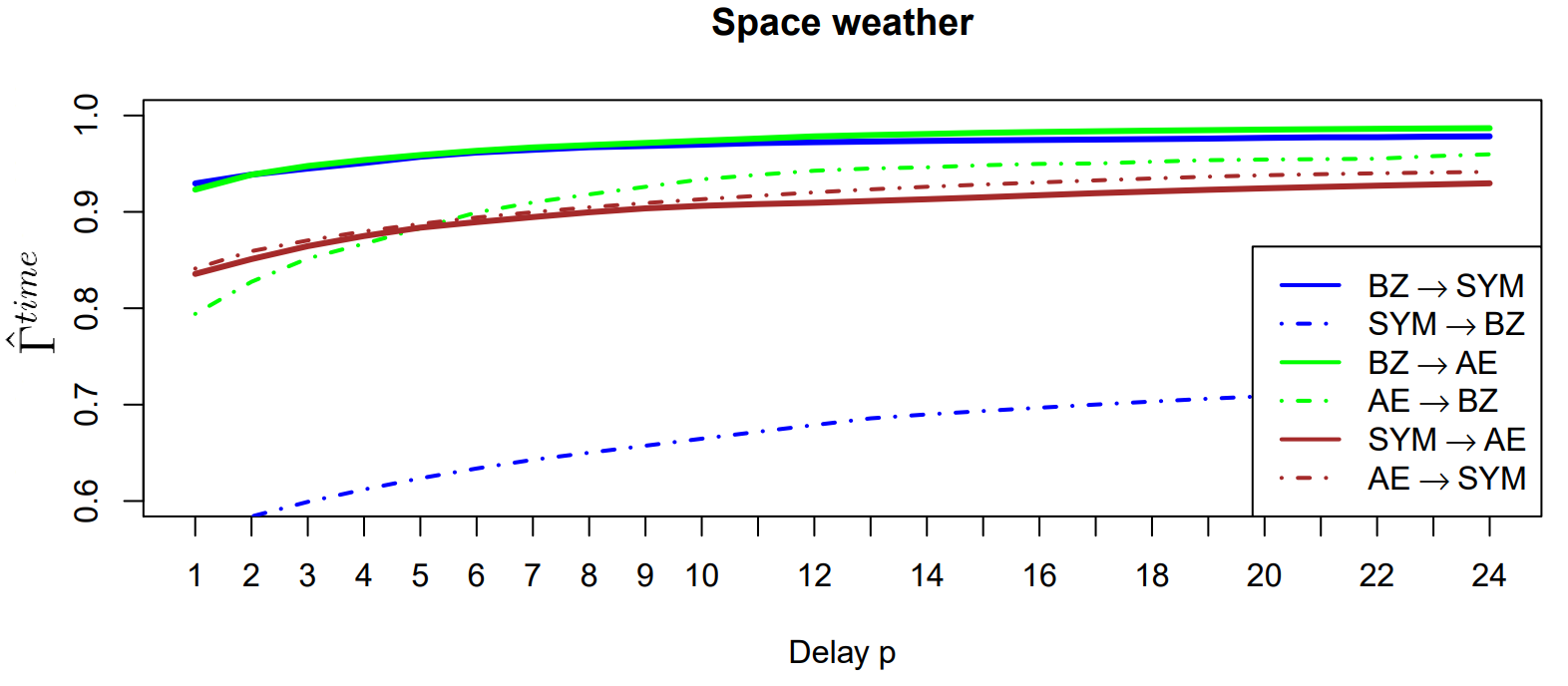}
\includegraphics[scale=0.3]{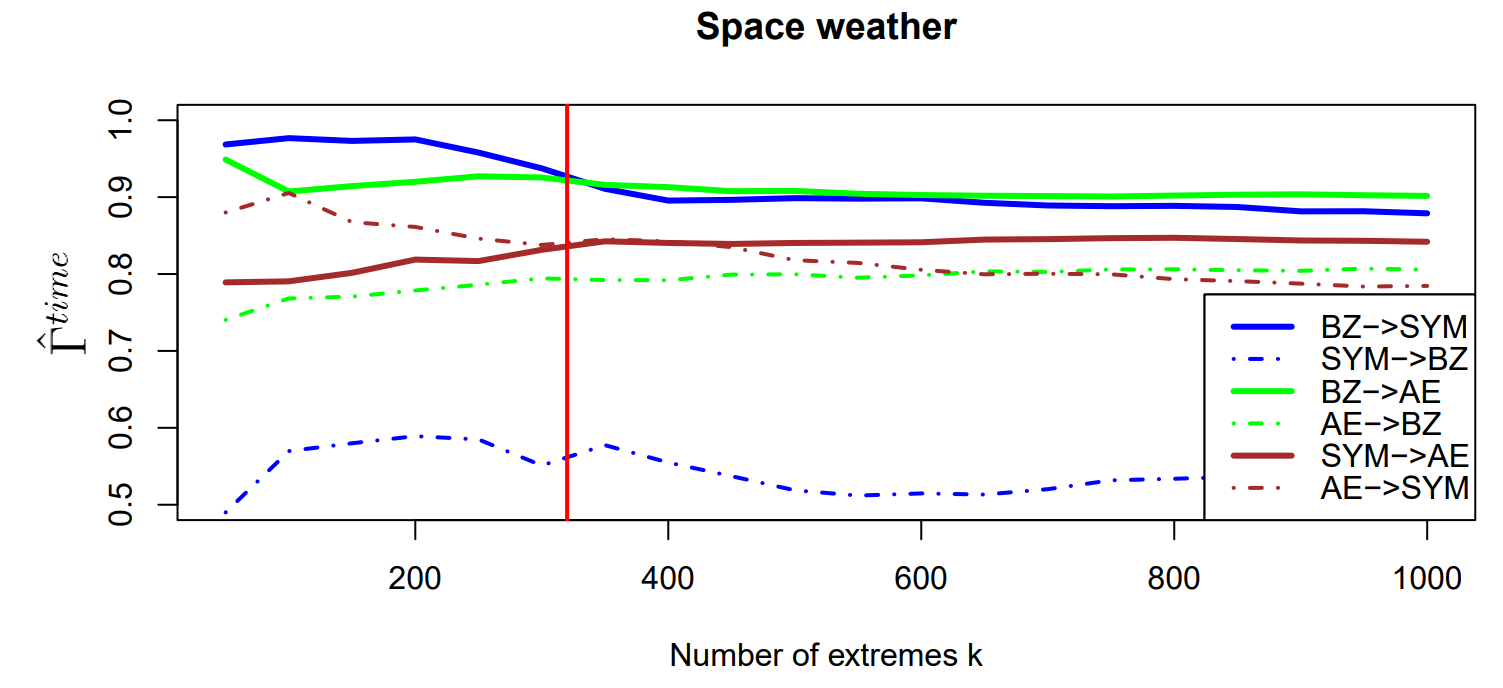}
\caption{The first figure represents all values of $\hat{\Gamma}^{time}_{\cdot\to  \cdot}(p)$ across a range of extreme delays $p\in [1, 24]$ with $k$ set to $k=\sqrt{n}$, for all possible pairs of:  SYM (magnetic storm index), AE (substorm index) and BZ (interplanetary magnetic field). An evident asymmetry in the causal influence between the time series BZ-SYM and BZ-AE is visible from the plot. The second figure represents all values of $\hat{\Gamma}^{time}_{\cdot\to  \cdot}(1)$ for different number of extremes $k$. The red vertical line corresponds to $k=\sqrt{n}$.}
\label{Space lag}
\end{figure}

These results align with the hypothesis that BZ is a common cause of SYM and AE, with no causal relation between them. Note that our methodology has the capability to accommodate scenarios involving a common cause, at least in theory. While conventional methods might suggest a causal relationship from AE to SYM, our analysis reveals instances of extreme events where AE exhibits extremity, yet SYM does not.

\subsection{Hydrometeorology}
\label{Hydrometeorology}

Numerous significant phenomena have an impact on weather and climate, even though their importance and effects are not yet well understood. Within this application, we focus on a selection of the most intriguing phenomena to explore potential causal relationships among them. By delving into these relationships, we aim to gain insights into the intricate interplay among these variables, contributing to a deeper understanding of their influence on weather and climate dynamics.

\begin{itemize}
\item \textbf{El Niño Southern Oscillation (ENSO)} is a climate pattern that describes the unusual warming of surface waters in the eastern tropical Pacific Ocean. It has an impact on ocean temperatures, the speed and strength of ocean currents, the health of coastal fisheries, and local weather from Australia to South America and beyond. 
\item \textbf{North Atlantic Oscillation (NAO)} is a weather phenomenon over the North Atlantic Ocean of fluctuations in the difference of atmospheric pressure. 
\item \textbf{Indian Dipole Index (DMI)} (sometimes referred to as the Dipole Mode Index) is commonly measured by an index describing the difference between sea surface temperature anomalies in two specific regions of the tropical Indian Ocean.
\item \textbf{Pacific Decadal Oscillation (PDO)} is a recurring pattern of ocean-atmosphere climate variability centred over the mid-latitude Pacific basin. 
\item \textbf{East Asian Summer Monsoon Index (EASMI)} is defined as an area-averaged seasonally dynamically normalized seasonality at 850 hPa within the East Asian monsoon domain.
\item \textbf{Amount of rainfall in a region in India (AoR)} is a data set consisting of the amount of rainfall in the region in the centre of India. 
\end{itemize}

For all six variables, we possess monthly measurements starting from 1.1.1953. Following preliminary data manipulation, such as mitigating seasonality and ensuring data stationarity (for comprehensive specifics, please refer to the supplementary package), we turn our attention to assessing the assumption of heavy-tailness.

It seems that the assumption of identical tail indexes across all time series does not hold. Employing the same methodology as in the preceding subsection, we calculate the estimated tail indexes along with their corresponding confidence intervals, as presented in Table \ref{Table5}. It is evident that ENSO and AoR exhibit notably larger tail indexes in comparison to the other variables. On the other hand, variables NAO, DMI, PDO, and EASMI appear to share similar tail indexes (with NAO possibly having a slightly smaller tail index).  Still, we will proceed keeping in mind that these relations between the first and second group may be misleading. 

Finally, we proceed to compute the causal tail coefficients for each pair of time series. In this application, we conclude that $\mathbf{X}$ causes $\mathbf{Y}$ if and only if both $\hat{\Gamma}_{\mathbf{X}\to \mathbf{Y}}^{time}(p)>0.9$ and $\hat{\Gamma}_{\mathbf{Y}\to \mathbf{X}}^{time}(p)<0.8$. This selection is motivated by the outcomes discussed in Subsection \ref{section hidden confounder}. Simulation results indicate that if both $\hat{\Gamma}_{\mathbf{X}\to \mathbf{Y}}^{time}(p), \hat{\Gamma}_{\mathbf{Y}\to \mathbf{X}}^{time}(p)$ are close to $1$, we can not distinguish between the case when $\mathbf{X}\leftrightarrow \mathbf{Y}$ and when there exists a confounder with heavier tails causing both $\mathbf{X}$ and $\mathbf{Y}$. In this application, the absence of a confounder with heavier tails \textit{cannot} be assumed, given the distinct tail indexes observed in the relevant time series. Hence, if both $\hat{\Gamma}_{\mathbf{X}\to \mathbf{Y}}^{time}(p), \hat{\Gamma}_{\mathbf{Y}\to \mathbf{X}}^{time}(p)$ are large, we can not conclude any causal relation. Therefore, we are only interested in asymmetric extremal behavior. We chose the threshold $0.9$ with the same reasoning discussed in Subsection \ref{SectionTesting}. A similar line of thought guides the choice of the threshold 0.8: a suitable threshold ought to reside within the interval $(0.7, 0.9)$, given that most coefficients $ \hat{\Gamma}_{\mathbf{Y}\to \mathbf{X}}^{time}(p)$ from simulations (and the results of this application in Table \ref{Hydro}) lie within this interval. Furthermore, the results remain the same if we chose any threshold in the interval $(0.77, 0.86)$ since no arrows in Figure \ref{Sipky} would change. Consequently, the chosen threshold logically fits within this gap in the computed coefficients.

We opt to use an extremal delay of $p=12$, equivalent to one year, for all pairs in our analysis\footnote{In words, we answer the following question: When an extreme event happens in one time series, will an extreme event happen in the next $12$ months in the other time series? And is this not valid for the opposite direction? We answer yes if $\hat{\Gamma}_{\mathbf{X}\to \mathbf{Y}}^{time}>0.9$ and $\hat{\Gamma}_{\mathbf{Y}\to \mathbf{X}}^{time}<0.8$.}. The resulting causal relationships are outlined in Table \ref{Hydro}.
Subsequently, upon constructing the corresponding graph, we identify specific extremal causal relationships, as depicted in Figure \ref{Sipky}. Here, directed arrows indicate the pairs with an apparent asymmetry in the corresponding $\hat{\Gamma}^{time}(p)$ coefficients (bold numbers in Table \ref{Hydro}). The red arrows correspond to a pair of time series with significantly different tail indexes. 

Concluding that the discovered causal relations are surely true would be a very bold statement. These highly complex datasets may not conform to a straightforward time series model. Their tail indexes are also different. Additionally, the absence of certain causal relationships doesn't imply their nonexistence; for instance, a causal connection might exist without exerting significant influence on extreme events. while this methodology doesn't offer definitive conclusions, it does identify asymmetries in extreme behavior and provides a preliminary notion of directions that might warrant further exploration.

\begin{table}[]
\centering
\begin{tabular}{|c|c|c|c|c|c|c|}
\hline
               & ENSO      & NAO          & DMI          & PDO          & EASMI        & AoR          \\ \hline
$\hat{\theta}$ & 0.44         & 0.19         & 0.28         & 0.25         & 0.25         & 0.44         \\ \hline
CI             & (0.30, 0.56) & (0.14, 0.25) & (0.19, 0.33) & (0.17, 0.33) & (0.17, 0.32) & (0.30, 0.57) \\ \hline
\end{tabular}
\caption{Estimations of the tail indexes and their $95\%$ confidence intervals.}
\label{Table5}
\end{table}

\begin{table}[]
\centering
\begin{tabular}{|l|c|c|c|c|c|c|}
\hline
$\hat{\Gamma}^{time}(p)$ & ENSO             & NAO                    & DMI                    & PDO                    & EASMI                  & AoR                    \\ \hline
ENSO        & $\times$ & 0.88                   & 0.94                   & 0.87                   & \textit{\textbf{0.92}} & \textit{\textbf{0.95}} \\ \hline
NAO            & 0.71                   & $\times$ & 0.83                   & 0.77                   & 0.90                   & 0.94                   \\ \hline
DMI            & 0.86                   & 0.89                   & $\times$ & \textit{\textbf{0.77}} & \textit{\textbf{0.94}} & \textit{\textbf{0.95}} \\ \hline
PDO            & 0.92                   & 0.88                   & \textit{\textbf{0.92}} & $\times$ & \textit{\textbf{0.94}} & \textit{\textbf{0.94}} \\ \hline
EASMI          & \textit{\textbf{0.65}} & 0.92                   & \textit{\textbf{0.74}} & \textit{\textbf{0.72}} & $\times$ & 0.91                   \\ \hline
AoR            & \textit{\textbf{0.69}} & 0.92                   & \textit{\textbf{0.77}} & \textit{\textbf{0.72}} & 0.89                   & $\times$ \\ \hline
\end{tabular}
\caption{Estimation of the causal tail coefficient for time series for each pair of variables. Concerning the order of subscripts, the column comes first, then the row. For example, $\hat{\Gamma}_{DMI\to  NAO}^{time}(p)=0.89$ and $\hat{\Gamma}_{NAO\to  DMI}^{time}(p)=0.83$.  }
\label{Hydro}
\end{table}

\begin{figure}[]
\centering
\includegraphics[scale=0.7]{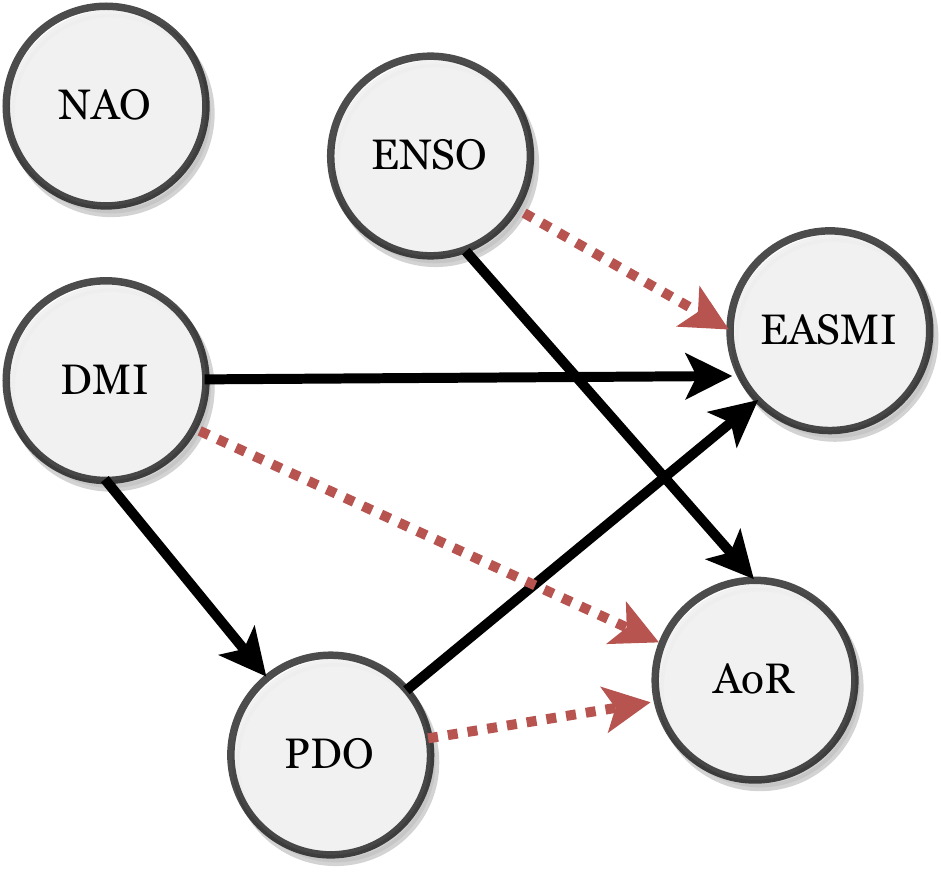}
\caption{The estimated causal directions, determined using the causal tail coefficient for time series, involve the following variables: AoR (Amount of rainfall in India), DMI (Indian Dipole Index), PDO (Pacific Decadal Oscillation), NAO (North Atlantic Oscillation), EASMI (East Asian Summer Monsoon Index) and ENSO (El Niño Southern Oscillation). The presence of arrows denotes asymmetries in extreme events. The presence of arrows denotes asymmetries in extreme events. In cases where the corresponding tail indexes significantly differ (indicating that the assumptions are not satisfied), the arrow is depicted in red.}
\label{Sipky}
\end{figure}

As a means of comparison with existing research in the field of earth science, we offer a selection of pertinent results and articles that tend to concur with our findings. It's important to note that this list is not exhaustive.

\begin{itemize}
\item \textbf{ENSO} $\to$  \textbf{EASMI} \citep{ENSOtoEASMI} (using information theory approach), \citep{PDOtoEASMI},
\item \textbf{ENSO}  $\to$ \textbf{AoR} \citep{ElNinoToAoR} (although rigorous causal methodology was not used),
\item \textbf{ENSO}  $\to$ \textbf{NAO} \citep{ENSOtoNAO} (using a phase dynamics modeling which can give different results than looking at extremes),
\end{itemize}
\begin{itemize}
\item \textbf{DMI}  $\to$ \textbf{ENSO} \citep{DMItoENSO},
\item \textbf{DMI}  $\to$  \textbf{EASMI} \citep{ENSOtoEASMI} (using information theory approach),
\end{itemize}

\begin{itemize}
\item \textbf{PDO}  $\to$ \textbf{AoR}  \citep{ElNinoToAoR} (although rigorous causal methodology was not used),
\item \textbf{PDO}  $\to$ \textbf{EASMI} \citep{PDOtoEASMI} (although rigorous causal methodology was not used),
\end{itemize}

\begin{itemize}
\item \textbf{NAO} is known to influence air temperature and precipitation in Europe, Northern America and a part of Asia \citep{NAOtoEUROPE}. However, whether there are relations between NAO and other investigated variables is yet not known. 
\end{itemize}

Our findings align with the majority of results found in the literature. However, there are three exceptions worth noting:  our results suggest DMI $\to$ PDO  (we couldn't find any reason for this causal relation in the literature). Also, our results do \textit{not} suggest DMI $\to$ ENSO and ENSO $\to$ NAO, although these causal relations seem to hold \citep{ENSOtoNAO, DMItoENSO}.

\section{Conclusion}
\label{CONCLUSION}

Causal inference in time series presents a recurring challenge within the literature. In numerous real-world scenarios, the causal mechanisms become apparent during extreme periods rather than within the bulk of the distribution. We introduce an innovative method that leverages extreme events for detecting causal relationships. This method is formalized within a mathematical framework, and several of its properties are systematically demonstrated under specific model assumptions.

This work sheds light on some connections between causality and extremes. Many scientific disciplines use causal inference as a baseline of their work. A method that can detect a causal direction in complex heavy-tailed datasets can be very useful in such domains.

This subject offers an extensive array of prospects for future research. For instance, what is the underlying distribution of  $\hat\Gamma^{time}_{\mathbf{X}\to \mathbf{Y}}(p)$? Could we devise a better (and consistent) statistic that mitigates the negative bias inherent in $\hat\Gamma^{time}_{\mathbf{X}\to \mathbf{Y}}(p)$? Is there a means to establish a rigorous testing methodology for the hypothesis $\Gamma^{time}_{\mathbf{X}\to \mathbf{Y}}(p) = 1$? Furthermore, can we incorporate observed covariates into the causal inference between $\mathbf{X}$ and $\mathbf{Y}$, moving beyond reliance solely on pairwise discovery? Does this method retain its efficacy even when applied to light-tailed time series? Can a similar approach be adapted to a more generalized framework? These questions can lead to potential future research and important results.

\section*{Conflict of interest and data availability}
\textsf{R} code and the data are available in an online \href{  repository}{https://github.com/jurobodik/Causality-in-extremes-of-time-series.git}, or at a request to an author. 

The authors declare that they have no known competing financial interests or personal relationships that could have appeared to influence the work reported in this paper.

\section*{Acknowledgements}
We are grateful to the editorial team and the referees, for knowledgeable comments that improved the paper. We also wish to thank Valérie Chavez-Demoulin and Katerina Schindler for helpful discussions.  This project was supported by the Czech Science Foundation (Project No. GA19-16066S) and by the Czech Academy of Sciences (Praemium Academiae awarded to M. Palu\v{s}).

\appendix
\section{Auxiliary propositions}
\label{chapter 5}
%We repeatedly use a special case of so-called Breiman's lemma \citep{breiman1965limit}, stating that for two non-negative random variables $Z_1, Z_2$, where $Z_1\sim RV(\theta)$ and $\mathbb{E}(Z_2^{\theta + \epsilon})<\infty$ for some $\epsilon>0$, holds $\pr(Z_1Z_2>x)\sim \mathbb{E}(Z_2^\theta) \pr(Z_1>x)$ as $x\to\infty$. In particular, for every $\alpha>0$ holds $$\pr(\alpha Z_1>x)\sim \alpha^\theta \pr(Z_1>x).$$

We repeatedly use the following property of regularly varying random variables  \citep{breiman1965limit}. If $Z\sim \RV(\theta)$, then for every $\alpha>0$ we have $$\pr(\alpha Z>x)\sim \alpha^\theta \pr(Z>x).$$

\begin{proposition}
\label{PrvaLemma}
Let $X,Y, (\varepsilon_i, i\in\mathbb{N})$ be independent continuous random variables with support on some neighborhood of infinity. Let $a_i, b_i\geq 0$, $i\in\mathbb{N}$ and $\lambda_1,\lambda_2\in\mathbb{R}$ be constants. Then $$\pr(X+Y>\lambda_1\mid a_1X+a_2Y>\lambda_2)\geq \pr(X+Y>\lambda_1),$$or more generally, 
\begin{equation*}
    \pr(\sum_{i=1}^{\infty}a_i\varepsilon_i>\lambda_1\mid \sum_{i=1}^{\infty}b_i\varepsilon_i>\lambda_2)\geq \pr(\sum_{i=1}^{\infty}a_i\varepsilon_i>\lambda_1),
\end{equation*}
provided that the sums are a.s. summable, non-trivial. 
\end{proposition}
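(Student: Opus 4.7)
The plan is to recognize that both events appearing in the inequality are coordinate-wise increasing events in the independent family $(\varepsilon_i)_{i\in\mathbb{N}}$ (respectively $X, Y$ in the simpler formulation), and then apply Harris's correlation inequality (the FKG inequality for products of probability spaces). Concretely, since $a_i, b_i \geq 0$, the linear combinations
$$
S_a := \sum_{i=1}^{\infty} a_i \varepsilon_i, \qquad S_b := \sum_{i=1}^{\infty} b_i \varepsilon_i
$$
are both non-decreasing functions of each coordinate $\varepsilon_i$. Hence the events $A = \{S_a > \lambda_1\}$ and $B = \{S_b > \lambda_2\}$ are both increasing events on the product probability space on which the independent family $(\varepsilon_i)$ is defined.

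By Harris's inequality (i.e., the FKG inequality for independent coordinates), any two increasing events in independent random variables are non-negatively correlated, so
$$
\pr(A \cap B) \;\geq\; \pr(A)\, \pr(B).
$$
The assumption that the $\varepsilon_i$ have support on a neighborhood of infinity, together with the non-triviality of $S_b$, guarantees $\pr(B) > 0$, so dividing yields
$$
\pr(A \mid B) \;\geq\; \pr(A),
$$
which is precisely the asserted inequality. The first statement about $X, Y$ is recovered by specialising to two coordinates ($\varepsilon_1 = X$, $\varepsilon_2 = Y$) with coefficient vector $(1,1)$ for $A$ and $(a_1, a_2)$ for $B$.

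The one technical point to address is that Harris's inequality is usually stated for finitely many coordinates, whereas here $S_a$ and $S_b$ depend on infinitely many $\varepsilon_i$. I would handle this by truncating at level $N$: apply the finite-dimensional Harris inequality conditionally on $(\varepsilon_i)_{i>N}$ (so that the remainders $\sum_{i>N} a_i\varepsilon_i$ and $\sum_{i>N} b_i\varepsilon_i$ act as deterministic shifts of the thresholds), and then pass to $N\to\infty$ using the a.s.\ summability hypothesis and dominated convergence. Alternatively, one can invoke Harris directly on the infinite product space, which is standard via the monotone class theorem. This passage to the limit is the only mildly delicate step; everything else is immediate from the monotonicity observation.
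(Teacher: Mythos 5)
Your proposal is correct and follows essentially the same route as the paper: the paper invokes the association property of independent random variables (citing a covariance inequality for non-decreasing functions, which is exactly Harris's/FKG inequality) applied to the indicators $\1(\sum_{i=1}^n a_i x_i>\lambda_1)$ and $\1(\sum_{i=1}^n b_i x_i>\lambda_2)$, divides by $\pr(\sum_{i=1}^n b_i\varepsilon_i>\lambda_2)>0$, and then lets $n\to\infty$. Your treatment of the infinite-coordinate limit is in fact spelled out more carefully than the paper's one-line passage to the limit, but it is the same argument in substance.
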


\begin{proof}
Let $\mathbf{\varepsilon}=(\varepsilon_1, \dots, \varepsilon_n)$, then for any non-decreasing functions $f,g:\mathbb{R}^n\to \mathbb{R}$ we have $$\cov(f(\mathbf{\varepsilon}), g(\mathbf{\varepsilon}))\geq 0.$$  
This is a well-known result from the theory of associated random variables, see, e.g., \cite[Theorem 2.1]{cov(f(X)g(x))>0}. The functions $f(x_1,\dots, x_n)=\1(\sum_{i=1}^n a_ix_i>\lambda_1)$ and $g(x_1,\dots, x_n)=\1(\sum_{i=1}^n b_ix_i>\lambda_2)$ are non-decreasing because $a_i, b_i\geq 0$. Therefore, we obtain
\begin{equation*}\label{NahodnyLabel}
\begin{split}
0&\leq \cov(f(\varepsilon), g(\varepsilon))\\&
=\pr(\sum_{i=1}^n a_i\varepsilon_i>\lambda_1, \sum_{i=1}^n b_i\varepsilon_i>\lambda_2) - \pr(\sum_{i=1}^n a_i\varepsilon_i>\lambda_1)\pr(\sum_{i=1}^n b_i\varepsilon_i>\lambda_2). 
\end{split}
\end{equation*}
Dividing both sides by $\pr(\sum_{i=1}^n b_i\varepsilon_i>\lambda_2) $ (which is positive), we obtain the inequality 
\begin{equation*}
\label{umbalagunga}
    \pr(\sum_{i=1}^{n}a_i\varepsilon_i>\lambda_1\mid \sum_{i=1}^{n}b_i\varepsilon_i>\lambda_2)\geq \pr(\sum_{i=1}^{n}a_i\varepsilon_i>\lambda_1)
\end{equation*}
for arbitrary $n\in\mathbb{N}$. The assertion of the proposition follows by taking the limits as $n \to \infty$.
%Next, we will give an argument for the infinite case. Denote $X_n:=\sum_{i=1}^n a_i\varepsilon_i$, $X:=\sum_{i=1}^\infty a_i\varepsilon_i$,$Y_n:=\sum_{i=1}^n b_i\varepsilon_i$, $X:=\sum_{i=1}^\infty b_i\varepsilon_i$. We know that $X_n\overset{a.s.}{\to}X$ and  
%$Y_n\overset{a.s.}{\to}Y$ from the assumptions. Therefore, 
%\begin{equation*}
%\begin{split}
%\pr(X_n>\lambda_1\mid Y_n>\lambda_2) &= \frac{\pr(X_n>\lambda_1, %Y_n>\lambda_2)}{\pr(Y_n>\lambda_2)}\overset{n\to\infty}{\to} \frac{\pr(X>\lambda_1, %Y>\lambda_2)}{\pr(Y>\lambda_2)}\\&
%=\pr(X>\lambda_1\mid Y>\lambda_2). 
%\end{split}
%\end{equation*}
%But we know that $$\pr(X_n>\lambda_1\mid Y_n>\lambda_2) \geq %\pr(X_n>\lambda_1)\overset{n\to\infty}{\to}\pr(X>\lambda_1).$$
%Combining these results, our proposition follows. 
\end{proof}

%\section*{Proposition 2}
\begin{proposition} 
\label{TentoTheorem}
For $i=0,1,2,\dots,$ let
\begin{itemize}
\item $ \varepsilon_i^X, \varepsilon_i^Y\overset{iid}{\sim} \RV(\theta)$ be continuous,
\item $a_i, b_i, c_i\geq 0$ be constants and $\exists\delta>0:$ $\sum_{i=0}^\infty a_i^{\theta-\delta}<\infty, \sum_{i=0}^\infty b_i^{\theta-\delta}<\infty, \sum_{i=0}^\infty c_i^{\theta-\delta}<\infty$,
\item denote  $A=\sum_{i=0}^\infty a_i^\theta, B=\sum_{i=0}^\infty b_i^\theta, C=\sum_{i=0}^\infty c_i^\theta$, for which it holds that $A,B,C\in (0, \infty)$,
\item let $\Phi=\{i\in\mathbb{N}\cup \{0\}: b_i>0=a_i\}$.    
\end{itemize}
Then,    
$$\lim_{u\to\infty}\pr(\sum_{i=0}^\infty a_i \varepsilon_i^X<\lambda\mid \sum_{i=0}^\infty b_i\varepsilon_i^X + \sum_{i=0}^\infty c_i\varepsilon_i^Y>u)= \pr(\sum_{i=0}^\infty a_i \varepsilon_i^X<\lambda)\frac{C+\sum_{i\in\Phi}b_i^\theta}{C+B}$$ 
for all $\lambda\in\mathbb{R}$. 
\end{proposition}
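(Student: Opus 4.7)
The argument rests on the single big jump principle: for a sum of independent regularly varying random variables with the summability given, an exceedance of a high level $u$ is driven, up to lower order, by a single coordinate. Let $S_1 := \sum_{i\geq 0} a_i \varepsilon_i^X$ and $S_2 := \sum_{i\geq 0} b_i\varepsilon_i^X + \sum_{i\geq 0} c_i\varepsilon_i^Y$. Combining Breiman's lemma ($\pr(b_k\varepsilon_k^X>u)\sim b_k^\theta \pr(\varepsilon_0^X>u)$, same for $c_k\varepsilon_k^Y$) with the max-sum equivalence lemma for infinite sums already invoked in the paper, the assumption $\sum_i (a_i^{\theta-\delta}+b_i^{\theta-\delta}+c_i^{\theta-\delta})<\infty$ yields the denominator asymptotic
\begin{equation*}
\pr(S_2>u)\sim (B+C)\,\pr(\varepsilon_0^X>u),\qquad u\to\infty.
\end{equation*}

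Next I would decompose $\{S_2>u\}$ according to which coordinate carries the big jump. For a truncation level $N$ and a margin $\eta>0$, set $E_k^X:=\{b_k\varepsilon_k^X>(1+\eta)u\}$ and $E_k^Y:=\{c_k\varepsilon_k^Y>(1+\eta)u\}$. Standard single-big-jump arguments give that $\{S_2>u\}$ and $\bigcup_{k\leq N}(E_k^X\cup E_k^Y)$ differ by an event of order $o(\pr(\varepsilon_0^X>u))$ once $u\to\infty$ and then $N\to\infty$, and that the $E_k^{\cdot}$ are asymptotically disjoint. I would then compute $\pr(S_1<\lambda,E_k^{\cdot})$ in three classes: (i) $Y$-contributions: $\varepsilon_k^Y$ is independent of $S_1$, giving $\pr(S_1<\lambda)\cdot c_k^\theta \pr(\varepsilon_0^X>u)$; (ii) $X$-contributions with $k\in\Phi$, so $a_k=0$: again $\varepsilon_k^X$ is independent of $S_1$, giving $\pr(S_1<\lambda)\cdot b_k^\theta \pr(\varepsilon_0^X>u)$; (iii) $X$-contributions with $a_k>0$: on $E_k^X$ we have $a_k\varepsilon_k^X\geq (a_k/b_k)(1+\eta)u$, while the rest of $S_1$ has a distribution independent of $u$, so $a_k\varepsilon_k^X$ dominates $S_1$ and $\pr(S_1<\lambda\mid E_k^X)\to 0$; hence class (iii) contributes $o(\pr(\varepsilon_0^X>u))$.

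Summing (i)--(iii) over $k\leq N$ and then sending $N\to\infty$ using $B,C<\infty$ to control the index tail gives
\begin{equation*}
\pr(S_1<\lambda,\,S_2>u)\sim \pr(S_1<\lambda)\cdot\Bigl(C+\sum_{i\in\Phi}b_i^\theta\Bigr)\pr(\varepsilon_0^X>u),
\end{equation*}
and dividing by $\pr(S_2>u)$ produces the claim.

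The principal obstacle will be making the tail truncation quantitative: one must show that $\pr(S_1<\lambda,\bigcup_{k>N}(E_k^X\cup E_k^Y))\leq \eta_N \pr(\varepsilon_0^X>u)$ with $\eta_N\to 0$ as $N\to\infty$, uniformly in $u$. This is precisely the content of the Lemma A.3 estimate the paper already cites, and the $\delta$-margin in the moment assumption $\sum b_i^{\theta-\delta}<\infty$ is what furnishes the Potter bounds needed to make the truncation uniform. The other delicate point is the uniformity in $k$ for class (iii); a crude bound using $a_k\varepsilon_k^X \to \infty$ on $E_k^X$ and a Chebyshev-type estimate on $\sum_{j\neq k} a_j\varepsilon_j^X$ suffices, and the absolute summability of $(a_j^{\theta-\delta})$ keeps the bound uniform across $k$.
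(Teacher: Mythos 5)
Your proposal is correct in substance and rests on the same underlying principle as the paper's proof (the single-big-jump phenomenon for weighted sums of iid $\RV(\theta)$ noise), but it is organized along a genuinely different route. You decompose the joint event $\{S_1<\lambda,\, S_2>u\}$ directly according to the location of the big jump and read off three classes of contributions, which produces the numerator $C+\sum_{i\in\Phi}b_i^\theta$ in a single pass and makes the asymmetry between indices in $\Phi$ and indices with $a_k>0$ completely transparent. The paper instead reduces to finite sums and argues through conditional probabilities: its Lemma \ref{jedna} and Lemma \ref{tri} are Bayes-type computations giving the exact limit when the conditioning sum splits into a part independent of $S_1$ plus a finite $b$-sum; its Lemma \ref{dva} is exactly your class (iii) (one term huge while the $a$-weighted sum stays small forces some $\varepsilon^X$ to be enormously negative, an event of vanishing probability); and the infinite tails are handled by an explicit $\delta$--$\zeta$ truncation that proves the lower bound, with the upper bound asserted as analogous. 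Your version buys a more symmetric two-sided argument (sandwiching in the margin $\eta$ yields both inequalities at once), at the price of having to make the jump-location estimates quantitative yourself rather than outsourcing them to the conditional lemmas.

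Two small repairs are needed, neither fatal. First, with the margin $\eta$ held fixed, the symmetric difference between $\{S_2>u\}$ and $\bigcup_{k\leq N}(E_k^X\cup E_k^Y)$ is not $o(\pr(\varepsilon_0^X>u))$ but $O(\eta)\,\pr(\varepsilon_0^X>u)$: the dominant term may land in the window $((1-\eta)u,(1+\eta)u]$, whose probability is of exact order $\pr(\varepsilon_0^X>u)$ with a constant of size roughly $\theta\eta(B+C)$. The argument therefore needs the triple limit $u\to\infty$, then $N\to\infty$, then $\eta\to 0$ (which also absorbs the $(1+\eta)^{-\theta}$ factors in classes (i) and (ii)); your setup clearly intends this, but the order of limits should be stated. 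Second, the \say{Chebyshev-type estimate} in class (iii) is unavailable: the proposition allows any $\theta>0$, so for $\theta\leq 2$ the noise has infinite variance and for $\theta\leq 1$ not even a mean. No moment bound is needed, though: on $E_k^X$ with $a_k>0$, the event $\{S_1<\lambda\}$ forces $\sum_{j\neq k}a_j\varepsilon_j^X<\lambda-(a_k/b_k)(1+\eta)u$, and since this sum is an a.s.\ finite random variable independent of $\varepsilon_k^X$, its distribution function evaluated at a point tending to $-\infty$ goes to zero; uniformity over the finitely many $k\leq N$ is automatic. This is precisely how the paper's Lemma \ref{dva} argues, via $\pr(\varepsilon_1^X<\frac{\lambda-\omega u/B}{A})\to 0$.
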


We consider only those $\lambda\in\mathbb{R}$ such that $\pr(\sum_{i=0}^\infty a_i \varepsilon_i^X<\lambda)>0$, otherwise the statement is trivial. Note that the first condition in Proposition \ref{TentoTheorem} implies that all $\sum_{i=0}^\infty a_i\varepsilon_i^X, \sum_{i=0}^\infty b_i\varepsilon_i^X, \sum_{i=0}^\infty c_i\varepsilon_i^Y$ are a.s. summable \citep{Theorem1.7}.We prove this proposition using the following series of lemmas. 

\begin{lemma}
\label{jedna}
Let $X,Y\sim \RV(\theta)$ be independent random variables. Then
$$\lim_{u\to\infty}\pr(X<\lambda\mid X+Y>u)=\pr(X<\lambda)\lim_{u\to\infty}\frac{\pr(Y>u)}{\pr(Y>u)+\pr(X>u)},$$
 for every $ \lambda\in\mathbb{R}$. 
\end{lemma}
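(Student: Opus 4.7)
The plan is to compute the joint probability $\pr(X<\lambda, X+Y>u)$ and the normalizing probability $\pr(X+Y>u)$ separately, then take their ratio. For the numerator, I would condition on $X$ and write it as
\[
\pr(X<\lambda, X+Y>u) = \int_{-\infty}^{\lambda} \pr(Y>u-x)\,dF_X(x).
\]
Regular variation of $Y$ gives $\pr(Y>u-x)/\pr(Y>u)\to 1$ pointwise as $u\to\infty$, and for $x\le \lambda$ the integrand ratio is dominated by $\max\{1,\pr(Y>u-\lambda)/\pr(Y>u)\}$, which is itself bounded uniformly in large $u$ by Potter-type estimates for slowly varying functions. A dominated convergence argument then yields
\[
\pr(X<\lambda, X+Y>u) \sim \pr(X<\lambda)\pr(Y>u) \quad \text{as } u\to\infty.
\]

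For the denominator, I would invoke the max-sum equivalence recalled in the preliminaries, namely $\pr(X+Y>u) \sim \pr(X>u)+\pr(Y>u)$ for independent random variables with regularly varying tails of the same index. Dividing the two asymptotics yields
\[
\pr(X<\lambda\mid X+Y>u) \sim \pr(X<\lambda)\cdot\frac{\pr(Y>u)}{\pr(X>u)+\pr(Y>u)},
\]
and passing to the limit gives the stated identity (with the understanding that the left-hand side has a limit precisely when the right-hand ratio does).

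The main technical obstacle is the justification of dominated convergence in the numerator, which requires Potter-type bounds to control $\pr(Y>u-x)/\pr(Y>u)$ uniformly for $x\le\lambda$; the rest is a direct application of standard regular-variation tools. Intuitively, the identity encodes the well-known heavy-tail heuristic: conditional on $X+Y$ being extreme, exactly one of $X,Y$ is responsible, and with probability $\pr(Y>u)/[\pr(X>u)+\pr(Y>u)]$ it is $Y$, in which case $X$ retains its unconditional distribution and the event $\{X<\lambda\}$ occurs with probability $\pr(X<\lambda)$; on the complementary event $X$ itself is large and hence cannot lie below $\lambda$.
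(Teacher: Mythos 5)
Your proof is correct, and it shares the paper's overall skeleton---both arguments reduce the claim to the two asymptotics $\pr(X<\lambda,\,X+Y>u)\sim\pr(X<\lambda)\pr(Y>u)$ and $\pr(X+Y>u)\sim\pr(X>u)+\pr(Y>u)$---but you justify the first of these by a genuinely different, more self-contained route. The paper proceeds via Bayes' theorem: it introduces an auxiliary variable $W$ with the law of $X$ conditioned on $\{X<\lambda\}$, observes that $\pr(W>u)/\pr(Y>u)\to 0$, and then cites an external result (Theorem 2.1 of the regularly-varying-densities reference) to conclude $\pr(W+Y>u)\sim\pr(Y>u)$. You instead write the joint probability as $\int_{-\infty}^{\lambda}\pr(Y>u-x)\,\diff F_X(x)$ and pass to the limit by dominated convergence, which trades the external citation for a direct verification of the domination. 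Your Potter-bound argument is valid, though it can be simplified: since $x\le\lambda$ implies $u-x\ge u-\lambda$, monotonicity of the tail gives the uniform bound $\pr(Y>u-x)/\pr(Y>u)\le\pr(Y>u-\lambda)/\pr(Y>u)$, and the latter tends to $1$ by the long-tail property of regularly varying distributions, so no Potter estimates are strictly needed over the finite measure $\diff F_X$ on $(-\infty,\lambda)$. A further point in your favor: you flag explicitly that the right-hand limit need not exist for an arbitrary pair of $\RV(\theta)$ variables (the ratio of slowly varying functions can oscillate), so the identity should be read as holding whenever that limit exists; the paper's statement leaves this implicit.
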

\begin{lemma}
\label{dva}
Under the conditions from Proposition \ref{TentoTheorem}, 
$$
\lim_{u\to\infty}\pr(\sum_{i=0}^n a_i\varepsilon_i^X<\lambda\mid \sum_{i=0; i\notin\Phi}^n b_i\varepsilon_i^X>u)=0
$$
for all $n\in\mathbb{N}$.
\end{lemma}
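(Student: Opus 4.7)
My plan is to exploit the ``one-big-jump'' heuristic for sums of independent regularly varying random variables. Let $J := \{i \in \{0,1,\dots,n\} : i \notin \Phi,\ b_i > 0\}$, and write $S := \sum_{i \in J} b_i \varepsilon_i^X$ and $T := \sum_{j=0}^n a_j \varepsilon_j^X$. The crucial combinatorial observation is that every $i \in J$ satisfies $a_i > 0$: indeed, if $a_i = 0$ and $b_i > 0$ then by definition $i \in \Phi$. Therefore, any $\varepsilon_i^X$ that forces $S$ to be large also contributes a large positive term $a_i \varepsilon_i^X$ to $T$, preventing $\{T < \lambda\}$ from occurring with significant mass.

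First, I would record that by max-sum equivalence for the finite sum $S$ of iid regularly varying variables, $\pr(S > u) \sim B'\,\pr(\varepsilon_1^X > u)$ with $B' := \sum_{i \in J} b_i^\theta \in (0,\infty)$. For the numerator, I would bound crudely via the union $\{S > u\} \subseteq \bigcup_{i \in J}\{b_i \varepsilon_i^X > u/|J|\}$ and, conditioning on $\varepsilon_i^X$ inside each term using independence, arrive at
\begin{equation*}
\pr(T < \lambda,\, S > u) \le \sum_{i \in J} \int_{u/(b_i|J|)}^{\infty} f_{\varepsilon^X}(x)\, h_i(\lambda - a_i x)\,dx,
\end{equation*}
where $h_i(y) := \pr\bigl(\sum_{j \ne i} a_j \varepsilon_j^X < y\bigr)$. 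Since $a_i > 0$ for $i \in J$, the map $x \mapsto h_i(\lambda - a_i x)$ is monotone decreasing, so it can be pulled out at the lower limit of integration, yielding
\begin{equation*}
\pr(T < \lambda,\, S > u) \le \sum_{i \in J} h_i\!\left(\lambda - \tfrac{a_i u}{b_i|J|}\right) \pr\!\left(\varepsilon_1^X > \tfrac{u}{b_i|J|}\right).
\end{equation*}

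To close the argument, I would apply the standard regular-variation scaling $\pr(\varepsilon_1^X > u/c) \sim c^\theta\,\pr(\varepsilon_1^X > u)$ for each fixed constant $c = b_i|J|$, divide by the asymptotic for $\pr(S > u)$, and conclude
\begin{equation*}
\pr(T < \lambda \mid S > u) \le (1+o(1))\,\frac{1}{B'} \sum_{i \in J} (b_i|J|)^\theta\, h_i\!\left(\lambda - \tfrac{a_i u}{b_i|J|}\right).
\end{equation*}
Each factor $h_i(\,\cdot\,)$ is a distribution function evaluated at a threshold that tends to $-\infty$ (precisely because $a_i > 0$), hence vanishes as $u \to \infty$, giving the claim.

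I do not anticipate a serious obstacle here; the proof is essentially a Breiman-type scaling combined with a union bound. The only delicate bookkeeping is ensuring that the crude union bound on the numerator is still of the right order $\pr(\varepsilon_1^X > u)$ so that the spurious $h_i$-factor is what drives the ratio to zero, rather than a mismatch of scales. This is automatic because the scaling constants $b_i|J|$ are fixed in $u$. Worth emphasizing is that the lemma fails without the restriction $i \notin \Phi$: the removed indices are precisely those where $S$ could become large through an $\varepsilon_i^X$ that does not appear in $T$, which would destroy the conclusion, explaining the role of $\Phi$ in the parent Proposition~\ref{TentoTheorem}.
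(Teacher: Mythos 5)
Your proof is correct, and it shares the skeleton of the paper's argument --- localize the big jump by a union bound over the indices carrying the conditioning sum, use max-sum equivalence $\pr(S>u)\sim B'\,\pr(\varepsilon_1^X>u)$ for the denominator and the scaling $\pr(\varepsilon_1^X>u/c)\sim c^\theta\,\pr(\varepsilon_1^X>u)$ for the numerator, and exploit the structural fact that $i\notin\Phi$ together with $b_i>0$ forces $a_i>0$ --- but it differs genuinely in how the joint event is killed. The paper argues that a big jump $\varepsilon_j^X>u/B$ (with $B=\sum_{i=0}^n b_i$ in its local notation) combined with $\sum_{i}a_i\varepsilon_i^X<\lambda$ forces a \emph{second}, extremely negative noise $\varepsilon_i^X<(\lambda-\tfrac{\omega u}{B})/A$ where $\omega=\min_i a_i>0$ after reducing to $\Phi=\emptyset$; it then applies a pairwise union bound over $(i,j)$, picking up a factor $n(n+1)$, and factorizes the two coordinates by independence. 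You instead condition on the single large $\varepsilon_i^X$ and use monotonicity of $x\mapsto h_i(\lambda-a_ix)$ to pull the conditional distribution function out at the lower integration limit, so only a one-index union bound is needed and the vanishing factor is $h_i\!\left(\lambda-\tfrac{a_iu}{b_i|J|}\right)$ rather than a product of two rare-event probabilities. Your route is arguably cleaner on two counts: it makes the role of $\Phi$ explicit through the index set $J$ (the paper's \say{WLOG $\Phi=\emptyset$, $\omega>0$} glosses over indices with $a_i=b_i=0$, which are harmless but strictly break $\min_{i\le n}a_i>0$ as written), and it avoids manufacturing the second extreme noise. Two small remarks: the density $f_{\varepsilon^X}$ is not actually needed --- Proposition \ref{TentoTheorem} only assumes continuity, and your bound is simply $\E\bigl[\1(\varepsilon_i^X>c)\,h_i(\lambda-a_i\varepsilon_i^X)\bigr]\le h_i(\lambda-a_ic)\,\pr(\varepsilon_i^X>c)$, which requires no density --- and, like the paper, you implicitly assume $J\neq\emptyset$ so that the conditioning event is non-null, which is the only case in which the lemma has content.
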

\begin{lemma}
\label{tri}
Let $Z\sim \RV(\theta)$ be a random variable independent of $(\varepsilon_i^X, i\in\mathbb{Z})$. Under the conditions from Proposition \ref{TentoTheorem},
\begin{equation*}
\begin{split}
&\lim_{u\to\infty}\pr(\sum_{i=0}^n a_i \varepsilon_i^X<\lambda\mid \sum_{i=0; i\notin \Phi}^n b_i\varepsilon_i^X + Z>u)\\&
= \pr(\sum_{i=0}^n a_i \varepsilon_i^X<\lambda)\lim_{u\to\infty}\frac{\pr(Z>u)}{\pr(Z>u) + \pr(\sum_{i=0; i\notin \Phi}^n b_i\varepsilon_i^X>u)}
\end{split}
\end{equation*}
for all $n\in\mathbb{N}$.
\end{lemma}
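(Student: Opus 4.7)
The plan is to decompose the conditioning event $\{T+Z>u\}$, where I set $T:=\sum_{i=0;\,i\notin\Phi}^n b_i\varepsilon_i^X$ and $A:=\{\sum_{i=0}^n a_i\varepsilon_i^X<\lambda\}$, into the two heuristic regimes characteristic of sums of independent regularly varying variables: ``$Z$ drives the extreme'' and ``$T$ drives the extreme''. In the first regime $Z$ is independent of $(T,A)$ (both functions of $\varepsilon^X$), so the conditional probability of $A$ should collapse to $\pr(A)$; in the second regime, Lemma~\ref{dva} applied at threshold $u/2\to\infty$ forces the contribution to vanish relative to $\pr(T>u)$. As a preliminary, $T\in\RV(\theta)$ as a finite linear combination of iid $\RV(\theta)$ variables, cf.\ \cite[Lemma A.3]{Theorem1.7}, so by independence of $T,Z$ and max-sum equivalence, $\pr(T+Z>u)\sim\pr(T>u)+\pr(Z>u)$.

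Concretely, I would split
$$\pr(A,T+Z>u)=I_1(u)+I_2(u),\quad I_1:=\pr(A,T+Z>u,T\le u/2),\ I_2:=\pr(A,T+Z>u,T>u/2).$$
For $I_2$, the trivial bound $I_2(u)\le\pr(A,T>u/2)$ combined with Lemma~\ref{dva} (whose conclusion is equivalent to $\pr(A,T>v)=o(\pr(T>v))$ as $v\to\infty$) and the regular-variation identity $\pr(T>u/2)\sim 2^{\theta}\pr(T>u)$ yields $I_2(u)=o(\pr(T>u))$.

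For $I_1$, conditioning on $\varepsilon^X$ and using $Z\perp\varepsilon^X$ gives
$$I_1(u)=\E\bigl[\1(A,T\le u/2)\,\pr(Z>u-T)\bigr].$$
Dividing by $\pr(Z>u)$, the integrand $\1(A,T\le u/2)\,\pr(Z>u-T)/\pr(Z>u)$ converges pointwise to $\1(A)$, since $T$ is a.s.\ finite and $Z\in\RV(\theta)$. The main technical obstacle is to justify the exchange of limit and expectation, because the restriction $\{T\le u/2\}$ is $u$-dependent; a uniform-in-$u$ domination is obtained as follows: on $\{T\le 0\}$ the ratio is trivially at most $1$, while on $\{0<T\le u/2\}$ Potter's bounds give a constant upper bound of order $2^{\theta}$ for all $u$ large enough. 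Dominated convergence then yields $I_1(u)/\pr(Z>u)\to\pr(A)$. Combining the two estimates,
$$\pr(A\mid T+Z>u)=\frac{\pr(A)\pr(Z>u)+o\bigl(\pr(T>u)+\pr(Z>u)\bigr)}{\pr(T>u)+\pr(Z>u)+o\bigl(\pr(T>u)+\pr(Z>u)\bigr)},$$
which converges to $\pr(A)\cdot\lim_{u\to\infty}\pr(Z>u)/[\pr(Z>u)+\pr(T>u)]$, as claimed (the RHS limit exists in the setting of Proposition~\ref{TentoTheorem} since $\pr(T>u)$ and $\pr(Z>u)$ share the slowly varying function inherited from the common distribution of the $\varepsilon_i^X,\varepsilon_i^Y$).
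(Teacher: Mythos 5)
Your proof is correct, but it follows a genuinely different route from the paper's. The paper argues via a double application of Bayes' theorem: it introduces an auxiliary variable $W$, independent of $Z$, whose law is that of $T=\sum_{i=0;\,i\notin\Phi}^n b_i\varepsilon_i^X$ conditioned on the event $A=\{\sum_{i=0}^n a_i\varepsilon_i^X<\lambda\}$, rewrites $\pr(A\mid T+Z>u)=\pr(A)\,\pr(W+Z>u)/\pr(T+Z>u)$, applies sum-equivalence to both convolutions, and closes by showing $\pr(W>u)=o(\pr(T>u))$ --- again via Bayes together with Lemma \ref{dva}. You instead decompose the conditioning event at the threshold $u/2$: the regime $\{T>u/2\}$ is killed by Lemma \ref{dva} plus regular variation of $T$ (the same key input as the paper's $\pr(W>u)=o(\pr(T>u))$ step), while in the regime $\{T\le u/2\}$ you condition on $\varepsilon^X$ and pass to the limit under the expectation with a uniform domination. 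Both arguments are sound; yours buys self-containedness, since it avoids invoking sum-equivalence for the convolution $W+Z$, where $W$ is not itself regularly varying and the paper must lean on a ``lighter tail plus long tail'' convolution result (the one already used in Lemma \ref{jedna}), at the cost of an explicit dominated-convergence verification, which you carry out correctly --- note that monotonicity of the tail of $Z$ alone gives $\pr(Z>u-T)/\pr(Z>u)\le \pr(Z>u/2)/\pr(Z>u)\to 2^\theta$ on $\{0<T\le u/2\}$, so Potter's bounds are a convenience rather than a necessity. Two small remarks: like the paper (which reduces to $\Phi=\emptyset$ ``without loss of generality''), you should dispose of the degenerate case in which every $b_i$ with $i\notin\Phi$ vanishes, since then $T\equiv 0$ is not regularly varying and the claim instead holds immediately by independence; and your closing observation that the right-hand limit need not exist for a general $Z\sim\RV(\theta)$ but does exist in the setting of Proposition \ref{TentoTheorem} is a point the paper leaves implicit --- strictly speaking, both proofs establish the asymptotic equivalence $\pr(A\mid T+Z>u)\sim\pr(A)\,\pr(Z>u)/\bigl[\pr(Z>u)+\pr(T>u)\bigr]$, which is the meaningful content of the lemma.
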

\begin{proof}[Proof of Lemma \ref{jedna}]

Using Bayes theorem, we obtain
\begin{equation*}
\begin{split}
\pr(X<\lambda\mid X+Y>u)&=\pr(X+Y>u\mid X<\lambda)\frac{\pr(X<\lambda)}{\pr(X+Y>u)}.
\end{split}
\end{equation*}
For the denominator we use the sum-equivalence $\pr(X+Y>u)\sim \pr(X>u)+\pr(Y>u)$. Therefore, it is sufficient to show that $\pr(X+Y>u\mid X<\lambda)\sim \pr(Y>u)$.

Now, let $W$ be a random variable independent of $Y$ with a distribution satisfying $\pr(W\leq t)=\pr(X\leq t\mid X<\lambda)$ for all $t\in\mathbb{R}$. Then, $\pr(X+Y>u\mid X<\lambda)=\pr(W+Y>u)$. We obviously have $\lim_{u\to\infty}\frac{\pr(W>u)}{\pr(Y>u)}=0$ and we can use, e.g., Theorem 2.1 in \cite{clanok_o_regularly_varying_densities} to obtain $\lim_{u\to\infty}\frac{\pr(X+Y>u\mid X<\lambda)}{\pr(Y>u)}=\lim_{u\to\infty}\frac{\pr(Y+W>u)}{\pr(Y>u)}=1$. Therefore $\lim_{u\to\infty}\pr(X+Y>u\mid X<\lambda) = \lim_{u\to\infty}\pr(Y>u)$, which we wanted to prove. 
\end{proof}
\begin{proof}[Proof of Lemma \ref{dva}]
Without loss of generality $\Phi=\emptyset$, otherwise we have only lower $n$. Denote $\omega=\min_{i\leq n}a_i$, it holds that $\omega>0$. In this proof only, we denote $B=\sum_{i=0}^nb_i$, and $A=\sum_{i=0}^na_i$. The following events relation are valid:
\begin{equation*}
\begin{split}
\{\sum_{i=0}^n a_i\varepsilon_i^X<\lambda;\sum_{i=0}^n b_i\varepsilon_i^X>u\}&\subseteq\{\exists j\leq n: \varepsilon_j^X>\frac{u}{B}, \sum_{i=0}^n a_i\varepsilon_i^X<\lambda\}\\&
\subseteq\{\exists i,j\leq n: \varepsilon_j^X>\frac{u}{B}, \varepsilon_i^X<\frac{\lambda-\frac{\omega u}{B}}{A}\}. 
\end{split}
\end{equation*}
(Simply put, there needs to be one large and one small $\varepsilon^X_\cdot$). Therefore, we can rewrite
\begin{equation*}
\begin{split}
&\lim_{u\to\infty}\pr(\sum_{i=0}^n a_i\varepsilon_i^X<\lambda\mid \sum_{i=0; i\notin\Phi}^n b_i\varepsilon_i^X>u)\\&
=\lim_{u\to\infty}\frac{\pr(\sum_{i=0}^n a_i\varepsilon_i^X<\lambda;\sum_{i=0}^n b_i\varepsilon_i^X>u)}{\pr(\sum_{i=0}^n b_i\varepsilon_i^X>u)}\\&
\leq \lim_{u\to\infty} \frac{\pr(\exists i,j\leq n: \varepsilon_i^X<\frac{\lambda-\frac{\omega u}{B}}{A}, \varepsilon_j^X>\frac{u}{B}\})}{\pr(\sum_{i=0}^n b_i\varepsilon_i^X>u)}\\&
\leq \lim_{u\to\infty} \frac{n(n+1)\pr(\varepsilon_1^X<\frac{\lambda-\frac{\omega u}{B}}{A}, \varepsilon_2^X>\frac{u}{B})}{\pr(\sum_{i=0}^n b_i\varepsilon_i^X>u)}\\&
=n(n+1) \lim_{u\to\infty} \frac{\pr(\varepsilon_1^X<\frac{\lambda-\frac{\omega u}{B}}{A})\pr(\varepsilon_2^X>\frac{u}{B})}{\sum_{i=0}^nb_i^\theta\cdot \pr(\varepsilon_2^X>u)}\\&
=n(n+1) \lim_{u\to\infty} \pr(\varepsilon_1^X<\frac{\lambda-\frac{\omega u}{B}}{A})\frac{B^\theta}{\sum_{i=0}^nb_i^\theta}=0.
\end{split}
\end{equation*}
\end{proof}
\begin{proof}[Proof of Lemma \ref{tri}]
Without loss of generality $\Phi=\emptyset$, otherwise we have only lower $n$. The proof is very similar to that of Proposition \ref{PrvaLemma}. In this proof only, we denote $B=\sum_{i=0}^nb_i$, and $A=\sum_{i=0}^na_i$. 

Let $W$ be a random variable independent of $Z$ with a distribution satisfying $\pr(W\leq t)=\pr(\sum_{i=0}^n b_i\varepsilon_i^X\leq t\mid \sum_{i=0}^n a_i\varepsilon_i^X<\lambda)$ for all $t\in\mathbb{R}$. Then, $\pr(X+Y>u\mid X<\lambda)=\pr(W+Y>u)$. Using Bayes theorem, we have
\begin{equation*}
\begin{split}
&\lim_{u\to\infty}\pr(\sum_{i=0}^n a_i \varepsilon_i^X<\lambda\mid \sum_{i=0}^n b_i\varepsilon_i^X + Z>u) \\&
=\lim_{u\to\infty}\pr(\sum_{i=0}^n b_i \varepsilon_i^X+Z>u\mid \sum_{i=0}^n a_i\varepsilon_i^X<\lambda)\frac{\pr(\sum_{i=0}^n a_i \varepsilon_i^X<\lambda)}{\pr(\sum_{i=0}^n b_i\varepsilon_i^X + Z>u)}\\&
=\pr(\sum_{i=0}^n a_i \varepsilon_i^X<\lambda)\lim_{u\to\infty}\frac{\pr(W+Z>u)}{\pr(\sum_{i=0}^n b_i\varepsilon_i^X+Z>u)}\\&
=\pr(\sum_{i=0}^n a_i \varepsilon_i^X<\lambda)\lim_{u\to\infty}\frac{\pr(W>u)+\pr(Z>u)}{\pr(\sum_{i=0}^n b_i\varepsilon_i^X>u)+\pr(Z>u)}.
\end{split}
\end{equation*}
In the last equality, we used the fact that $W$ does not have a heavier tail than $Z$ and therefore we can use the sum-equivalence.

All we need to prove is that $\lim_{u\to\infty}\frac{\pr(W>u)}{\pr(\sum_{i=0}^n b_i\varepsilon_i^X>u)}=0$. Again, using Bayes theorem, we obtain 
\begin{equation*}
\begin{split}
&\lim_{u\to\infty}\frac{\pr(W>u)}{\pr(\sum_{i=0}^n b_i\varepsilon_i^X>u)}= \lim_{u\to\infty}\frac{\pr(\sum_{i=0}^n b_i\varepsilon_i^X>u\mid \sum_{i=0}^n a_i\varepsilon_i^X<\lambda)}{\pr(\sum_{i=0}^n b_i\varepsilon_i^X>u)}\\&
=\lim_{u\to\infty}\frac{\pr(\sum_{i=0}^n a_i\varepsilon_i^X<\lambda\mid \sum_{i=0}^n b_i\varepsilon_i^X>u) \frac{\pr(\sum_{i=0}^n b_i\varepsilon_i^X>u)}{\pr(\sum_{i=0}^n a_i\varepsilon_i^X<\lambda)}}{\pr(\sum_{i=0}^n b_i\varepsilon_i^X>u)}\\&
=\frac{1}{\pr(\sum_{i=0}^n a_i\varepsilon_i^X<\lambda)} \lim_{u\to\infty} \pr(\sum_{i=0}^n a_i\varepsilon_i^X<\lambda\mid \sum_{i=0}^n b_i\varepsilon_i^X>u). 
\end{split}
\end{equation*}
The rest follows from Lemma \ref{dva}. 
\end{proof}

\begin{proof}[Proof of Proposition \ref{TentoTheorem}]
Let $\delta>0$, define $\zeta=1-\sqrt{1-\delta}>0$\footnote{$1-\sqrt{1-\delta}$ is a solution of $1-(1-\zeta)(1-\zeta)=\delta$. When $\delta\to 0$ then also $\zeta\to 0$.} and choose large $n_0\in \mathbb{N}$ such that the following conditions hold:
\begin{itemize}
\item $\pr(|\sum_{i=n_0+1}^\infty a_i\varepsilon_i^X|>\delta)<\delta$,
\item $\frac{\sum_{i=0}^{n_0} b_i^\theta + C}{\sum_{i=0}^\infty b_i^\theta + C}>1-\zeta$,
\item $\pr(|\sum_{i=n_0+1; i\notin\Phi}^\infty b_i\varepsilon_i^X| <\delta)>1-\zeta$.
\end{itemize}

Denote 
\begin{itemize}
\item $E=\sum_{i=0}^{n_0} a_i\varepsilon_i^X, F=\sum_{i=n_0+1}^\infty a_i\varepsilon_i^X$,
\item $G=\sum_{i=0; i\notin\Phi}^{n_0} b_i\varepsilon_i^X, H=\sum_{i=n_0+1; i\notin\Phi}^{\infty}  b_i\varepsilon_i^X$,
\item $Z=\sum_{i=0}^\infty c_i\varepsilon_i^Y + \sum_{i\in\Phi} b_i\varepsilon_i^X$. 
\end{itemize}

Then, $E,F,Z$ and also $G,H,Z$ are pairwise independent. 
Sum-max equivalence gives us $\pr(Z>u)\sim [\sum_{i=0}^\infty c_i^\theta + \sum_{i\in\Phi} b_i^\theta]  \pr(\varepsilon_1^X>u)$ and  
$\pr(G+H+Z>u)\sim [\sum_{i=0}^\infty c_i^\theta + \sum_{i=0}^\infty b_i^\theta]  \pr(\varepsilon_1^X>u)$. Hence, with our notation, we want to prove that 
\begin{equation*}
\begin{split}
\lim_{u\to\infty}&\pr(E+F<\lambda\mid G+H+Z>u) \\&
%\overset{?}{=} 
= \pr(E+F<\lambda)\lim_{u\to\infty}\frac{\pr(Z>u)}{\pr(G+H>u)+\pr(Z>u)}.
\end{split}
\end{equation*}

First, due to Lemma \ref{jedna},  
\begin{equation*}
\begin{split}
&\lim_{u\to\infty} \pr(H>\delta\mid H+(G+Z)>u)=1-\lim_{u\to\infty}\pr(H\leq\delta\mid H+(G+Z)>u)\\&
=1-\pr(H\leq\delta)\lim_{u\to\infty}\frac{\pr(G+Z>u)}{\pr(G+Z+H>u)}=1-\pr(H\leq\delta)\frac{\sum_{i=0}^{n_0} b_i^\theta + C}{\sum_{i=0}^\infty b_i^\theta + C}\\&<1-(1-\zeta)(1-\zeta)=\delta.
\end{split}
\end{equation*}

Second, using previous results and independence of $F$ and $(G,Z)$, we obtain
\begin{equation*}
\begin{split}
&\lim_{u\to\infty} \pr(F>\delta\mid G+H+Z>u)\\&
=\lim_{u\to\infty}\frac{\pr(F>\delta, G+H+Z>u, H>\delta)}{\pr(G+H+Z>u)} +\frac{\pr(F>\delta, G+H+Z>u, H\leq \delta)}{\pr(G+H+Z>u)} \\&
\leq\lim_{u\to\infty} \frac{\pr(G+H+Z>u, H>\delta)}{\pr(G+H+Z>u)} +\frac{\pr(F>\delta, G+Z>u-\delta)}{\pr(G+H+Z>u)} \\&
= \lim_{u\to\infty} \pr(H>\delta\mid G+H+Z>u) + \frac{\pr(F>\delta)\pr(G+Z>u-\delta)}{\pr(G+H+Z>u)}\\&
<\delta + \pr(F>\delta)\lim_{u\to\infty}\frac{\pr(G+Z>u)}{\pr(G+Z>u)+\pr(H>u)}\leq \delta +  \pr(F>\delta) <2\delta.
\end{split}
\end{equation*}

Finally, we obtain (the first inequality is trivial; the second uses the identity $\pr(A\cap B)\geq \pr(A)-\pr(B^c)$; the third uses the previous result; the equality follows from Lemma \ref{tri}; the next two inequalities follow from the sum-equivalence and trivial  $\pr(H>u)\geq 0$; the next is trivial and the last inequality follows from $\pr(F+\delta>0)>1-\delta$ and independence of $E,F$):
\begin{equation*}
\begin{split}
&\lim_{u\to\infty} \pr(E+F<\lambda\mid G+H+Z>u)\\&
\geq \lim_{u\to\infty} \pr(E+\delta<\lambda; F\leq \delta\mid G+H+Z>u)\\&
\geq \lim_{u\to\infty} \pr(E+\delta<\lambda\mid G+H+Z>u) - \pr(F>\delta\mid G+H+Z>u)\\&
\geq \lim_{u\to\infty} \pr(E<\lambda-\delta \mid G+(H+Z)>u) -2\delta \\&
= \pr(E<\lambda-\delta)\lim_{u\to\infty}\frac{\pr(H+Z>u)}{\pr(H+Z>u) + \pr(G>u)}-2\delta\\&
\geq \pr(E<\lambda-\delta)\lim_{u\to\infty}\frac{\pr(H>u)+\pr(Z>u)}{\pr(Z>u) + \pr(H>u)+\pr(G>u)}-2\delta\\&
\geq \pr(E<\lambda-\delta)\lim_{u\to\infty}\frac{\pr(Z>u)}{\pr(Z>u) + \pr(G+H>u)}-2\delta\\&
\geq \pr(E+(F+\delta)<\lambda - \delta; (F+\delta)> 0)\lim_{u\to\infty}\frac{\pr(Z>u)}{\pr(Z>u) + \pr(G+H>u)}-2\delta\\&
\geq (1-\delta)\pr(E+F+\delta<\lambda - \delta)\lim_{u\to\infty}\frac{\pr(Z>u)}{\pr(Z>u) + \pr(G+H>u)}-2\delta.
\end{split}
\end{equation*}
When we send $\delta \to 0$, we finally obtain 
\begin{equation*}
\begin{split}
&\pr(E+F<\lambda\mid G+H+Z>u)\\&
\geq \pr(E+F<\lambda)\lim_{u\to\infty}\frac{\pr(Z>u)}{\pr(Z>u) + \pr(G+H>u)},
\end{split}
\end{equation*}
which we wanted to show. The inequality in the opposite direction can be done analogously. 
\end{proof}

\begin{consequence}
Under the conditions from Proposition \ref{TentoTheorem},
$$\lim_{u\to\infty}\pr(\sum_{i=0}^\infty a_i | \varepsilon_i^X|<\lambda\mid \sum_{i=0}^\infty b_i\varepsilon_i^X + \sum_{i=0}^\infty c_i\varepsilon_i^Y>u)= \pr(\sum_{i=0}^\infty a_i |\varepsilon_i^X|<\lambda)\frac{C+\sum_{i\in\Phi}b_i^\theta}{C+B}.$$

\end{consequence}

\begin{proof}
The proof is analogous as that of Proposition \ref{TentoTheorem}. Modified Lemma \ref{jedna} and Lemma \ref{tri} are still valid, just with $|\varepsilon_i^X|$ instead of $\varepsilon_i^X$ in the equations. Modification for Lemma \ref{dva} is trivial, because  
\begin{equation*}
    \begin{split}
   &\lim_{u\to\infty}\pr(\sum_{i=0}^n a_i|\varepsilon_i^X|<\lambda\mid \sum_{i=0; i\notin\Phi}^n b_i\varepsilon_i^X>u)\\&
   \leq  \lim_{u\to\infty}\pr(\sum_{i=0}^n a_i\varepsilon_i^X<\lambda\mid \sum_{i=0; i\notin\Phi}^n b_i\varepsilon_i^X>u)=0.     
    \end{split}
\end{equation*}
The limiting argument for $n\to\infty$ remains the same. 
 \end{proof}

\begin{proposition}
\label{Proposition 3}

Let $(\mathbf{X},\mathbf{Y})^\top$ follow the $\NAAR(q)$ model, specified by 
\begin{align*}
X_t&=f(X_{t-1}) + \varepsilon_t^X,\\
Y_t&=g_{1}(Y_{t-1}) + g_{2}(X_{t-q}) + \varepsilon_t^Y,
\end{align*}
where $f, g_1, g_2$ are continuous and satisfy $\lim_{x\to\infty}h(x)=\infty$ and $\lim_{x\to\infty}\frac{h(x)}{x}<1$, $h=f, g_1, g_2$. Moreover, let $\varepsilon, \varepsilon_t^X, \varepsilon_t^Y\overset{\text{iid}}{\sim} \RV(\theta)$ be non-negative. If $(\mathbf{X},\mathbf{Y})^\top$ is stationary, then
\begin{equation*}
    \lim_{u\to\infty}\frac{\pr(Y_t>u)}{\pr(\varepsilon>u)}<\infty.
\end{equation*}
\end{proposition}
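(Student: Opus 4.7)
The plan is to reduce everything to a linear dominating recursion and then invoke the standard max-sum equivalence for infinite weighted sums of iid regularly varying noise. Since each of $f, g_1, g_2$ is continuous and non-negative on $[0,\infty)$ with $\lim_{x\to\infty}h(x)/x < 1$, I can pick a common $\rho \in (0,1)$ strictly larger than all three limits and a constant $C>0$ with $h(x)\le \rho x + C$ for all $x\ge 0$ and $h\in\{f,g_1,g_2\}$. This linearization converts the NAAR recursions into affine upper bounds.

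Next I would handle $X_t$ first. Iterating the bound $X_t\le \rho X_{t-1}+\varepsilon_t^X + C$ yields
\[
X_t\le \rho^n X_{t-n} + \sum_{i=0}^{n-1}\rho^i\varepsilon_{t-i}^X + \tfrac{C}{1-\rho}\quad\text{a.s.}
\]
Stationarity forces $\rho^n X_{t-n}\to 0$ in probability, while the non-negative partial sums increase a.s.\ to $\sum_{i=0}^{\infty}\rho^i\varepsilon_{t-i}^X$, which is a.s.\ finite because $\sum_i\rho^{i(\theta-\delta)}<\infty$ for small $\delta>0$. A subsequence argument then upgrades the limit to an a.s.\ inequality $X_t\le \sum_{i=0}^\infty \rho^i\varepsilon_{t-i}^X + C/(1-\rho)$. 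Lemma A.3 of the cited reference gives $\pr(\sum_i\rho^i\varepsilon_{t-i}^X>u)\sim\bigl(\sum_i\rho^{i\theta}\bigr)\pr(\varepsilon>u)$, so $\limsup_u\pr(X_t>u)/\pr(\varepsilon>u)<\infty$.

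The very same iteration on $Y_t$ produces
\[
Y_t\le \rho^n Y_{t-n} + \sum_{i=0}^{n-1}\rho^{i+1}X_{t-q-i} + \sum_{i=0}^{n-1}\rho^i\varepsilon_{t-i}^Y + \tfrac{C}{1-\rho},
\]
and the same stationarity-plus-subsequence argument passes to the a.s.\ infinite-sum bound. Substituting the a.s.\ bound on each $X_{t-q-i}$ from Step 2 and reindexing the resulting double sum $\sum_{i,j\ge 0}\rho^{i+j+1}\varepsilon^X_{t-q-i-j}$ as $\sum_{m\ge 1}m\rho^m\varepsilon^X_{t-q-m+1}$, I obtain an a.s.\ upper bound for $Y_t$ by a single weighted sum of the iid noise variables $\{\varepsilon_k^X,\varepsilon_k^Y\}$ with coefficients in $\ell^{\theta-\delta}$ (since $\sum_m m^{\theta-\delta}\rho^{m(\theta-\delta)}<\infty$). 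The tail estimate for infinite weighted sums of iid regularly varying variables then yields the desired $\limsup_u\pr(Y_t>u)/\pr(\varepsilon>u)<\infty$.

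The main obstacle is the justification of passing the finite iteration to $n=\infty$ in the almost-sure sense, since we do not yet know that $Y_t$ (or $X_t$) has any moments. This is resolved by combining two pieces of information that do hold for free: convergence in probability of the "initial-condition" term $\rho^n Y_{t-n}$ (from stationarity alone) and a.s.\ monotone convergence of the non-negative partial sums to an a.s.\ finite limit; a standard subsequence extraction then yields the needed a.s.\ bound. Once the bound by an infinite weighted sum of iid regularly varying noise is in hand, the tail conclusion is routine.
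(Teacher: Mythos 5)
Your proof is correct, and its core strategy --- dominating the nonlinear recursion by an affine one via $h(x)\le \rho x + C$, iterating, and applying max-sum equivalence for weighted sums of iid $\RV(\theta)$ noise --- is exactly the strategy of the paper's proof (the paper even remarks afterwards that it has shown a.s.\ domination by a stable linear sequence, which is your framing). The execution differs in two places, both defensibly. First, for the tail of $X_t$ the paper does not use pathwise domination: it proves the auxiliary lemma by a fixed-point ratio argument, writing $\lim_u \pr(X_t>u)/\pr(\varepsilon>u) \le 1 + c^\theta \lim_u \pr(X_t>u)/\pr(\varepsilon>u)$ with $c=\lim_x f(x)/x$, and solving to get the bound $1/(1-c^\theta)$; this leans on $X_t\sim\RV(\theta)$ and sum-equivalence, whereas your route dominates $X_t$ a.s.\ by $\sum_{i\ge 0}\rho^i\varepsilon^X_{t-i}+C/(1-\rho)$ and invokes the weighted-sum tail lemma directly --- more self-contained, at the cost of the limit-passage argument you supply. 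Second, for $Y_t$ the paper stops at the bound $\sum_i c^i\varepsilon^Y_{-i} + 2K/(1-c) + \sum_i c^{i+1}X_{q-i}$, keeping the (dependent) $X$'s and combining independence of the two blocks with the $X$-tail lemma; you instead substitute the $X$-bound and reindex the double sum into a single weighted sum with coefficients of order $m\rho^m$, which are indeed in $\ell^{\theta-\delta}$, so one application of the iid weighted-sum tail estimate finishes --- this avoids the paper's slightly delicate sum-equivalence step over dependent summands. Finally, your explicit justification of the infinite iteration (convergence in probability of $\rho^n Y_{t-n}$ by stationarity, monotone convergence of the non-negative partial sums, subsequence extraction to an a.s.\ inequality) fills in a step the paper asserts without comment; note only that what you obtain is $\limsup_u \pr(Y_t>u)/\pr(\varepsilon>u)<\infty$, which is the form actually used downstream.
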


\begin{lemma}
\label{prop 3 lemma}
Under the assumptions from Proposition \ref{Proposition 3},
\begin{equation*}
    \lim_{u\to\infty}\frac{\pr(X_t>u)}{\pr(\varepsilon>u)}<\infty.
\end{equation*}
\end{lemma}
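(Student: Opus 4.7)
The plan is to exploit the asymptotic contractivity of $f$ and iterate a one-step tail bound against the noise. Since $c:=\lim_{x\to\infty}f(x)/x<1$ and $f$ is continuous on $[0,\infty)$, I pick $c'\in(c,1)$ and $M>0$ with $f(x)\leq c' x$ on $[M,\infty)$; boundedness of $f$ on $[0,M]$ by some $K<\infty$ then gives the global inequality $f(x)\leq c' x+K$ for all $x\geq 0$. Substituting into $X_t=f(X_{t-1})+\varepsilon_t^X$ yields the deterministic domination $X_t\leq c' X_{t-1}+K+\varepsilon_t^X$, which is what we iterate.

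Next, fix $\epsilon\in(0,1-c')$ and set $\gamma:=\tfrac{1}{2}\bigl(1+(1-\epsilon)/c'\bigr)>1$. For $u$ beyond an explicit threshold (coming from absorbing the constant $K$), the splitting
$\{c' X_{t-1}+K+\varepsilon_t^X>u\}\subseteq\{X_{t-1}>\gamma u\}\cup\{\varepsilon_t^X>\epsilon u\}$,
combined with the independence of $X_{t-1}$ and $\varepsilon_t^X$ and the stationarity $X_{t-1}\stackrel{d}{=}X_t$, gives the one-step bound $\pr(X_t>u)\leq\pr(X_t>\gamma u)+\pr(\varepsilon>\epsilon u)$. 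Iterating $n$ times (the hypothesis on $u$ is preserved because $\gamma\geq 1$) and letting $n\to\infty$, using that the a.s.\ finite stationary $X_t$ forces $\pr(X_t>\gamma^n u)\to 0$, one obtains
$$\pr(X_t>u)\leq\sum_{k=0}^{\infty}\pr(\varepsilon>\epsilon\gamma^k u).$$

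The final step is to divide by $\pr(\varepsilon>u)$ and send $u\to\infty$. Potter's bounds for $\varepsilon\in\RV(\theta)$ supply, for any $\theta'\in(0,\theta)$, a constant $C$ such that $\pr(\varepsilon>\lambda u)/\pr(\varepsilon>u)\leq C\lambda^{-\theta'}$ whenever $\lambda\geq 1$ and $u$ is large. Splitting the sum at the finite index $k_0$ where $\epsilon\gamma^k$ first exceeds $1$, the tail $k>k_0$ is dominated by the summable $C(\epsilon\gamma^k)^{-\theta'}$ and each term's pointwise limit is $(\epsilon\gamma^k)^{-\theta}$, so dominated convergence applies; the finitely many head terms are passed to the limit individually. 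Altogether,
$$\limsup_{u\to\infty}\frac{\pr(X_t>u)}{\pr(\varepsilon>u)}\leq\epsilon^{-\theta}\sum_{k=0}^{\infty}\gamma^{-k\theta}=\frac{\epsilon^{-\theta}}{1-\gamma^{-\theta}}<\infty,$$
which is the claim. The main obstacle is justifying the sum–limit interchange in this last step: the uniform Potter bound is essential, and one must carefully isolate the finitely many small-$k$ terms (for which $\epsilon\gamma^k<1$, where the Potter bound does not apply) from the geometric tail, where it does.
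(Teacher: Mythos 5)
Your proof is correct, but it takes a genuinely different route than the paper's. The paper's proof is a fixed-point argument on the tail ratio: writing $L=\lim_{u\to\infty}\pr(X_t>u)/\pr(\varepsilon>u)$, it invokes the externally cited fact that $X_t\sim\RV(\theta)$, the Breiman-type relation $\pr(f(X_t)>u)\sim c^\theta\,\pr(X_t>u)$ with $c=\lim_{x\to\infty}f(x)/x<1$, and max-sum equivalence for the independent pair $(f(X_{t-1}),\varepsilon_t^X)$, obtaining $L\leq 1+c^\theta L$ and hence $L\leq(1-c^\theta)^{-1}$. You instead dominate $f$ globally by an affine contraction $f(x)\leq c'x+K$ (the same device the paper deploys in the proof of Proposition \ref{Proposition 3} itself, but not in this lemma), iterate the one-step union bound $\pr(X_t>u)\leq\pr(X_t>\gamma u)+\pr(\varepsilon>\epsilon u)$ using stationarity (note that independence of $X_{t-1}$ and $\varepsilon_t^X$ is not actually needed for this union bound), and control the resulting majorant series $\sum_{k}\pr(\varepsilon>\epsilon\gamma^k u)$ via Potter bounds plus dominated convergence; your explicit separation of the finitely many indices with $\epsilon\gamma^k<1$, where the uniform Potter bound does not apply, is exactly the care this step requires. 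What each approach buys: yours is more elementary and self-contained, needing neither $X_t\in\RV(\theta)$ nor sum-equivalence, and it sidesteps a subtlety in the paper's fixed-point step, namely that $L\leq 1+c^\theta L$ only forces $L\leq(1-c^\theta)^{-1}$ if one already knows $L<\infty$ (you produce an unconditional finite upper bound instead). The paper's route is shorter and gives the sharper constant $(1-c^\theta)^{-1}$, whereas your bound $\epsilon^{-\theta}/(1-\gamma^{-\theta})$ is cruder, though finiteness is all that matters here. One small mismatch with the statement: you bound only the $\limsup$, while the lemma is phrased as a limit; this is harmless, since only finiteness of the $\limsup$ is used downstream, and existence of the limit really rests on the cited tail-equivalence theorem for $\hNAAR$ models rather than on either proof.
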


\begin{proof}[Proof of Lemma \ref{prop 3 lemma}]
Let $c=\lim_{x\to\infty}\frac{f(x)}{x}\in [0,1)$. First, notice that   
\begin{equation*}
    \lim_{u\to\infty}\frac{\pr(f(X_t)>u)}{\pr(X_t>u)}= c^\theta.
\end{equation*}
Compute 
\begin{equation*}
\begin{split}
   & \lim_{u\to\infty}\frac{\pr(X_t>u)}{\pr(\varepsilon>u)}=\lim_{u\to\infty}\frac{\pr(f(X_{t-1})+\varepsilon_t^X>u)}{\pr(\varepsilon>u)} \\&
   =1+\lim_{u\to\infty}\frac{\pr(f(X_{t-1})>u)}{\pr(\varepsilon>u)}\leq 1+c^\theta\lim_{u\to\infty}\frac{\pr(X_{t-1}>u)}{\pr(\varepsilon>u)}\\&
   = 1+c^\theta\lim_{u\to\infty}\frac{\pr(X_t>u)}{\pr(\varepsilon>u)}.
\end{split}
\end{equation*}
We have used the sum-equivalence, independence and the previous equation. Therefore, we have $\lim_{u\to\infty}\frac{\pr(X_t>u)}{\pr(\varepsilon>u)} \leq \frac{1}{1-c^\theta}<\infty$.
\end{proof}

\begin{proof}[Proof of Proposition \ref{Proposition 3}]
Find $c<1, K\in\mathbb{R}$ such that for all $x>0$ we have
\begin{equation*}
       f(x)<K+cx, \; g_1(x)<K+cx, \; g_2(x)<K+cx. 
\end{equation*}
 
Note that $f(x+y)\leq (K+cx)+(K+cy)$. Then, the following holds a.s. 
\begin{equation*}
    \begin{split}
Y_0&=\varepsilon_0^Y+g_2(X_{-q}) + g_1(Y_{-1})\leq  \varepsilon_0^Y+g_2(X_{-q}) + K + cY_{-1}\\&
\leq \varepsilon_0^Y+g_2(X_{-q}) + K + c(\varepsilon_{-1}^Y+g_2(X_{-q-1}) + K + cY_{-2})\\&
\leq  (\varepsilon_0^Y + c\varepsilon_{-1}^Y + c^2\varepsilon_{-2}^Y+\dots) \\&\quad +( g_2(X_{-q})+cg_2(X_{-q-1}) +c^2g_2(X_{-q-2})+\dots) +(K+cK+c^2K+\dots)\\&
=\sum_{i=0}^\infty c^i\varepsilon_{-i}^Y+\sum_{i=0}^\infty c^iK + \sum_{i=0}^\infty c^ig_2(X_{q-i})\leq \sum_{i=0}^\infty c^i\varepsilon_{-i}^Y + \frac{2K}{1-c} + \sum_{i=0}^\infty c^{i+1}X_{q-i}. 
    \end{split}
\end{equation*}

Finally, because $X_i$ and $\varepsilon_j^Y$ are all independent,
\begin{equation*}
\begin{split}
   &\lim_{u\to\infty}\frac{\pr(Y_t>u)}{\pr(\varepsilon>u)}\leq\lim_{u\to\infty}\frac{\pr(\sum_{i=0}^\infty c^i\varepsilon_{-i}^Y + \frac{2K}{1-c} + \sum_{i=0}^\infty c^{i+1}X_{q-i}>u)}{\pr(\varepsilon>u)}\\&
   =\lim_{u\to\infty}\frac{\pr(\sum_{i=0}^\infty c^i\varepsilon_{-i}^Y>u) + \pr(\sum_{i=0}^\infty c^{i+1}X_{q-i}>u)}{\pr(\varepsilon>u)}\\&
   =\sum_{i=0}^\infty c^{i\theta} + \lim_{u\to\infty}\frac{\pr(\sum_{i=0}^\infty c^{i+1}X_{q-i}>u)}{\pr(\varepsilon>u)}<\infty,
\end{split}
\end{equation*}
where we used regular variation property, sum-equivalence, and Lemma \ref{prop 3 lemma}.
\end{proof}

\begin{remark}
We proved a stronger claim. We showed that for every $\hNAAR(q, \theta)$ model, there exist a stable $\VAR(q)$ sequence which is a.s. larger. Note that the $\VAR(q)$ process defined by 
\begin{align*}
X_t&=aX_{t-1}+ \varepsilon_t^X; \,\,\,\,\,\,\,
Y_t=bY_{t-1}+dX_{t-q}+ \varepsilon_t^Y,
\end{align*}
with $0<a,b,d<1$, is stable. 
\end{remark}

\section{Proofs}
\label{AppendixB}

\textbf{Observation:} Let $X,Y$ be continuous random variables with support on some neighborhood of infinity, and $F_X, F_Y$ their distribution functions. Then,  $$ \lim_{u\to 1^-}\E[F_Y(Y)\mid F_X(X)>u]=1$$ if and only if  $\lim_{v\to\infty}\pr(Y>\lambda\mid X>v)=1$ for every $\lambda\in \mathbb{R}$. 

%\begin{proof}
%Trivial. 
%\end{proof}

\begin{customthm}{\ref{Theorem 2.1}}
Let  $(\mathbf{X},\mathbf{Y})^\top$ be a bivariate time series which follows either the $\hVAR(q, \theta)$ model or the $\hNAAR(q, \theta)$ model. If $\mathbf{X}$ causes $\mathbf{Y}$, then $\Gamma^{time}_{\mathbf{X}\to \mathbf{Y}}(q)=1$. 
\end{customthm}

\begin{proof}[\hypertarget{Proof of Theorem 2.1.}{\textit{Proof for  $\hVAR(q, \theta)$ model}}]

Since $\mathbf{X}$ causes $\mathbf{Y}$, we get $\delta_r>0$ for some $r\leq q$. 
Then
\begin{equation*}
\begin{split}
\Gamma^{time}_{\mathbf{X}\to \mathbf{Y}}(q)& =\lim_{u\to 1^-}\E[\max\{F_Y(Y_0), \dots, F_Y(Y_q)\}\mid F_X(X_0)>u]\\
&\geq \lim_{u\to 1^-}\E[F_Y(Y_r)\mid F_X(X_0)>u] =\lim_{v\to \infty}\E[F_Y(Y_r)\mid X_0>v].
\end{split}
\end{equation*}

Now, if we prove that $\lim_{v\to\infty}\pr(Y_r>\lambda\mid X_0>v)=1$ for all 
$\lambda \in \mathbb{R}$, it will imply that $\lim_{v\to \infty}\E[F_Y(Y_r)\mid X_0>v]=1$ (see \hyperref[AppendixB]{Observation}). Rewrite 
\begin{equation*}
\begin{split}
&\pr(Y_r>\lambda\mid X_0>v)\\&
=\pr(\delta_rX_0 + \sum_{i=1}^q\beta_iY_{r-i}+\sum_{i=1; i\neq p}^q\delta_iX_{r-i} + \varepsilon_r^Y>\lambda\mid X_0>v)\\
&\geq \pr(\delta_rv + \sum_{i=1}^q\beta_iY_{r-i}+\sum_{i=1; i\neq r}^q\delta_iX_{r-i}+ \varepsilon_r^Y>\lambda\mid X_0>v).
\end{split}
\end{equation*}

Now, using causal representation, we can write
\begin{align*}
X_0&=\sum_{i=0}^\infty a_i\varepsilon^X_{-i}+\sum_{i=0}^\infty c_i\varepsilon^Y_{-i},\\
\sum_{i=1}^q\beta_iY_{r-i}+\sum_{i=1; i\neq r}^q\delta_iX_{r-i} + \varepsilon_r^Y&=\sum_{i=0}^\infty \phi_i\varepsilon^X_{r-i}+\sum_{i=0}^\infty \psi_i\varepsilon^Y_{r-i}
\end{align*}
for some $\phi_i, \psi_i\geq 0$. 

We obtain 
\begin{equation*}
\begin{split}
&\lim_{v\to\infty}\pr(\delta_rv + \sum_{i=1}^q\beta_iY_{r-i}+\sum_{i=1; i\neq r}^q\delta_iX_{r-i}>\lambda\mid X_0>v)\\
&=\lim_{v\to\infty}\pr(\sum_{i=0}^\infty \phi_i\varepsilon^X_{q-i}+\sum_{i=0}^\infty \psi_i\varepsilon^Y_{q-i}>\lambda-\delta_rv\mid \sum_{i=0}^\infty a_i\varepsilon^X_{-i}+\sum_{i=0}^\infty c_i\varepsilon^Y_{-i}>v)\\
&\geq \lim_{v\to\infty} \pr(\sum_{i=0}^\infty \phi_i\varepsilon^X_{q-i}+\sum_{i=0}^\infty \psi_i\varepsilon^Y_{q-i}>\lambda-\delta_rv)=1,
\end{split}
\end{equation*}
where we used Proposition \ref{PrvaLemma} in the last step. Therefore, $\lim_{v\to\infty}\pr(Y_r>\lambda\mid X_0>v)\geq 1$, which proves the theorem. 

\bigskip

\noindent
\textit{Proof for  $\hNAAR(q, \theta)$ model}.
We proceed very similarly as in the proof of $\hVAR(q, \theta)$ model. We rewrite $\Gamma^{time}_{\mathbf{X}\to \mathbf{Y}}(q) \geq \lim_{v\to \infty}\E[F_Y(Y_q)\mid X_0>v]$, which is equal to 1 if $\lim_{v\to\infty}\pr(Y_q>\lambda\mid X_0>v)=1$ for all $\lambda\in\mathbb{R}$. We rewrite 
\begin{equation*}
\lim_{v\to\infty}\pr(Y_q>\lambda\mid X_0>v) = \lim_{v\to\infty}\pr(g_1(Y_{q-1}) + g_2(X_0)+\varepsilon_q^Y>\lambda\mid X_0>v).  
\end{equation*}
Since $\mathbf{X}$ causes $\mathbf{Y}$, it holds that $g_2$ is not constant and $\lim_{x\to\infty}g_2(x)=\infty$. 
This implies that there exists $x_0\in\mathbb{R}$ such that $g_2(x)>\lambda$ for all $\lambda\in\mathbb{R}$. Therefore, for all $u>x_0$,
$$
\pr(g_2(X_0)>\lambda\mid X_0>v)=1.
$$
Finally, we only use the fact that $\varepsilon_t^Y$ and $g_1$ are non-negative. Hence,
\begin{equation*}
\begin{split}
&\lim_{v\to\infty}\pr(g_1(Y_{q-1}) + g_2(X_0)+\varepsilon_t^Y>\lambda\mid X_0>v)\\&\geq \lim_{v\to\infty}\pr(g_2(X_0)>\lambda\mid X_0>v)=1,
\end{split}
\end{equation*}
which we wanted to prove. 
\end{proof}

%\section*{Theorem \ref{Theorem 2.2.}}

\begin{customthm}{\ref{Theorem 2.2.}}
Let  $(\mathbf{X},\mathbf{Y})^\top$ be a bivariate time series which follows either the $\hVAR(q, \theta)$ model or the $\hNAAR(q, \theta)$ model. If $\mathbf{Y}$ does not cause $\mathbf{X}$, then $\Gamma^{time}_{\mathbf{Y}\to \mathbf{X}}(p)<1$ for all $p\in\mathbb{N}$. 
\end{customthm}

\begin{proof}[\hypertarget{Proof of Theorem 2.2.}{Proof for  $\hVAR(q, \theta)$ model}]
Let $\lambda\in\mathbb{R}$ be such that $\pr(X_0<\lambda)>0$. We show that $$\lim_{v\to\infty}\pr(\max (X_0, \dots, X_p)<\lambda\mid Y_0>v)>0,$$from which it follows that $\lim_{v\to\infty}\E[\max (F_X(X_0), \dots, F_X(X_p))\mid Y_0>v]<1$.

Rewrite 
\begin{equation*}
\begin{split}
\pr(\max (X_0, \dots, X_p)<\lambda\mid Y_0>v)&
= \pr(X_0<\lambda, \dots, X_p<\lambda\mid Y_0>v) \\&
\geq \pr(|X_0|+|X_1|+\dots + |X_p|<\lambda\mid Y_0>v).
\end{split}
\end{equation*}

Now, we use the causal representation of the time series, which, because we know that $\mathbf{Y}$ does not cause $\mathbf{X}$, can be written in the form
\begin{align*}
X_t&=\sum_{i=0}^\infty a_i\varepsilon^X_{t-i}; \,\,\,\,\,\,\,\,\,
Y_t=\sum_{i=0}^\infty b_i\varepsilon^Y_{t-i}+\sum_{i=0}^\infty d_i\varepsilon^X_{t-i}.
\end{align*}
We obtain 
\begin{equation*}
    \begin{split}
  &\pr(\sum_{t=0}^p|X_t|<\lambda\mid Y_0>v)=  \pr(\sum_{t=0}^p|\sum_{i=0}^\infty a_i\varepsilon^X_{t-i}|<\lambda\mid Y_0>v)\\&
  \geq  \pr(\sum_{t=0}^p\sum_{i=0}^\infty a_i|\varepsilon^X_{t-i}|<\lambda\mid Y_0>v)\\&
  = \pr(\sum_{i=0}^\infty \phi_i|\varepsilon^X_{p-i}|<\lambda\mid \sum_{i=0}^\infty b_i\varepsilon^Y_{-i}+\sum_{i=0}^\infty d_i\varepsilon^X_{-i}>v),
    \end{split}
\end{equation*}
for $\phi_i=a_i+\dots + a_{i-p}$ (we define $a_j=0$ for $j<0$). Finally, it follows from the consequence of Proposition \ref{TentoTheorem} that $$\lim_{v\to\infty} \pr(\sum_{i=0}^\infty \phi_i|\varepsilon^X_{p-i}|<\lambda\mid \sum_{i=0}^\infty b_i\varepsilon^Y_{-i}+\sum_{i=0}^\infty d_i\varepsilon^X_{-i}>v)>0,$$ which we wanted to prove (Proposition \ref{TentoTheorem} requires non-trivial sums, but if $\forall i: d_i=0$ then the series are independent and this inequality holds trivially).  

\bigskip

\noindent
\textit{Proof for $\hNAAR(q, \theta)$ model}.
We have 
\begin{align*}
X_t&=f(X_{t-1}) + \varepsilon_t^X; \,\,\,\,\,\,\,
Y_t=g_1(Y_{t-1})+g_2(X_{t-q}) + \varepsilon_t^Y.
\end{align*}
Choose large $\lambda\in\mathbb{R}$, such that $\sup_{x<\lambda}f(x)<\lambda$ and such that\footnote{This is possible from the assumptions on continuity and the limit behavior of $f$.} $$\pr\big(\varepsilon_0^X<\lambda-\sup_{x<\lambda}f(x)\big)>0.$$ Denote $\lambda^\star=\sup_{x<\lambda}f(x)$. Rewrite
\begin{equation*}
\begin{split}
&\pr(\max (X_0, \dots,  X_q)<\lambda\mid Y_0>v)
= \pr(X_0<\lambda, \dots, X_q<\lambda\mid Y_0>v)\\&
=\prod_{i=0}^q  \pr(X_i<\lambda\mid X_0<\lambda, \dots, X_{i-1}<\lambda, Y_0>v).
\end{split}
\end{equation*}
Then, as in the proof for the Heavy-tailed VAR model case, if we show that this is strictly larger than 0, it will imply that $\Gamma_{\mathbf{Y}\to \mathbf{X}}^{time}(q)<1$.  We know that for every $i\geq 1$ the following holds
\begin{equation*}
\begin{split}
&\lim_{v\to\infty} \pr(X_i<\lambda\mid X_0<\lambda, \dots, X_{i-1}<\lambda, Y_0>v)\\&
=\lim_{v\to\infty} \pr(f(X_{i-1})+\varepsilon_i^X<\lambda\mid X_0<\lambda, \dots, X_{i-1}<\lambda, Y_0>v)\\&
\geq\lim_{v\to\infty} \pr(\lambda^\star+\varepsilon_i^X<\lambda\mid X_0<\lambda, \dots, X_{i-1}<\lambda, Y_0>v)\\&
=\pr(\lambda^\star+\varepsilon_i^X<\lambda)>0. 
\end{split}
\end{equation*} 

We only need to show for the case when $i=0$ that $\lim_{v\to\infty} \pr(X_0>\lambda\mid Y_0>v) <1$. Let $Z=g_1(Y_{-1})+ g_2(X_{-q})$, $Z$ is independent with $\varepsilon_0^Y$. After rewriting, we obtain 
\begin{equation*}
\begin{split}
\pr(X_0>\lambda\mid Y_0>v) &= \pr(X_0>\lambda\mid \varepsilon_0^Y + Z>v) 
= \frac{\pr(X_0>\lambda;\varepsilon_0^Y + Z>v) }{\pr(\varepsilon_0^Y + Z>v)}.
\end{split}
\end{equation*}
Let $\frac{1}{2}<\delta<1$ (we will later send $\delta\to 1$). The following events-relation is valid:
\begin{equation*}
\begin{split}
&\{X_0>\lambda;\varepsilon_0^Y + Z>v\}  \\&\subseteq
\{X_0>\lambda;\varepsilon_0^Y>\delta v\} \cup \{Z>\delta v\} \cup \{Z>(1-\delta)v; \varepsilon_0^Y>(1-\delta)v\}.
\end{split}
\end{equation*}
Applying it to the previous equation, we obtain
\begin{equation*}
\begin{split}
&\lim_{v\to\infty} \frac{\pr(X_0>\lambda;\varepsilon_0^Y + Z>v) }{\pr(\varepsilon_0^Y + Z>v)}\\&
\leq \lim_{v\to\infty} \frac{\pr(X_0>\lambda;\varepsilon_0^Y>\delta v) + \pr(Z>\delta v) + \pr(Z>(1-\delta)v; \varepsilon_0^Y>(1-\delta)v) }{\pr(\varepsilon_0^Y + Z>v)}\\&
=\lim_{v\to\infty} \frac{\pr(X_0>\lambda)\pr(\varepsilon_0^Y>\delta v)}{\pr(\varepsilon_0^Y + Z>v)} + \frac{ \pr(Z>\delta v)}{\pr(\varepsilon_0^Y + Z>v)}\\&
\quad +\lim_{v\to\infty}\pr(Z>(1-\delta))\frac{(\frac{1}{1-\delta})^\theta \pr(\varepsilon_0^Y>v)}{\pr(\varepsilon_0^Y + Z>v)}\\&
=\frac{1}{\delta^\theta}\lim_{v\to\infty} \frac{\pr(X_0>\lambda)\pr(\varepsilon_0^Y>v)}{\pr(\varepsilon_0^Y + Z>v)}+ \frac{ \pr(Z>\delta v )}{\pr(\varepsilon_0^Y + Z>v)} + 0.
\end{split}
\end{equation*}
The last summand is $0$ because $\lim_{v\to\infty}\pr(Z>(1-\delta)v)=0$ and $\frac{\pr(\varepsilon_0^Y>v)}{\pr(\varepsilon_0^Y + Z>v)}\leq 1$ (simply because $Z$ is a non-negative random variable).

Now, we will use the result from Proposition \ref{Proposition 3}.  In the case when\\ $\lim_{v\to\infty} \frac{\pr(Z>v)}{\pr(\varepsilon_0^Y>v)}=0$, we obtain (see, e.g., Lemma 1.3.2 in \cite{Heavy_tailed_time_series}) $\lim_{v\to\infty}\frac{\pr(\varepsilon_0^Y>v)}{\pr(\varepsilon_0^Y+Z>v)}=1$ and $\lim_{v\to\infty}\frac{\pr(Z>v)}{\pr(\varepsilon_0^Y+Z>v)}=0$. Therefore,
\begin{equation*}
\lim_{v\to\infty} \frac{ \frac{1}{\delta^\theta}  \pr(X_0>\lambda)\pr(\varepsilon_0^Y>v) + \pr(Z>\delta v)}{\pr(\varepsilon_0^Y + Z>v)} = \frac{1}{\delta^\theta}\pr(X_0>\lambda)<1,
\end{equation*}
for $\delta$ close enough to 1. 

On the other hand, if $\lim_{v\to\infty} \frac{\pr(Z>v)}{\pr(\varepsilon_0^Y>v)}=c\in\mathbb{R}^+$, we also have that $Z\sim \RV(\theta)$ (this follows trivially from the definition of regular variation, tails behavior is the same up to a constant). Therefore, we can apply the sum-equivalence and we obtain
\begin{equation*}
    \begin{split}
    &\lim_{v\to\infty} \frac{\frac{\pr(X_0>\lambda)}{\delta^\theta}\pr(\varepsilon_0^Y>v) + \pr(Z>\delta v)}{\pr(\varepsilon_0^Y + Z>v)} \\&
    = \frac{1}{\delta^\theta}\lim_{v\to\infty} \frac{\pr(X_0>\lambda)\pr(\varepsilon_0^Y>v) + \pr(Z>v)}{\pr(\varepsilon_0^Y>v)+\pr(Z>v)}\\&
    = \frac{1}{\delta^\theta}\lim_{v\to\infty} \frac{\pr(X_0>\lambda)\pr(\varepsilon_0^Y>v) + c\pr(\varepsilon_0^Y>v)}{\pr(\varepsilon_0^Y>v)+c\pr(\varepsilon_0^Y>v)}\\&
    =\frac{1}{\delta^\theta}\frac{\pr(X_0>\lambda)+c}{1+c},
    \end{split}
\end{equation*}
which is less than 1 for $\delta$ close enough to 1. Therefore, we obtain \\ $\lim_{v\to\infty} \pr(X_0>\lambda\mid Y_0>v) <1$, which we wanted to prove.  
\end{proof}

\begin{customlem}{\ref{lemma Extremal causal condition}}
The extremal causal condition holds in the $\hVAR(q, \theta)$ model (i.e., where the coefficients are non-negative) when $\mathbf{X}$ causes $\mathbf{Y}$. 
\end{customlem}

\begin{proof}
\hypertarget{proof of lemma Extremal causal condition}{}
In the notion of Definition \ref{heavy-tailed-VAR} and Theorem \ref{Theorem 2.2.}, if $\delta_p>0$, then
\begin{equation*}
\begin{split}
        \sum_{i=0}^\infty d_i\varepsilon^X_{p-i}+\sum_{i=0}^\infty b_i\varepsilon^Y_{p-i}&=Y_p=\delta_pX_0+\dots=\delta_p(\sum_{i=0}^\infty a_i\varepsilon^X_{-i}+\sum_{i=0}^\infty c_i\varepsilon^Y_{-i})+\dots.
\end{split}
\end{equation*}
 Therefore, if $a_i>0$, then $d_{i+p}\geq \delta_pa_i>0$. 
\end{proof}

\begin{customthm}{\ref{Absolute value theorem}}
Let  $(\mathbf{X},\mathbf{Y})^\top$ be a time series which follows the $\hVAR(q, \theta)$ model, with possibly negative coefficients, satisfying the extremal causal condition. Moreover, let $\varepsilon_t^X, \varepsilon_t^Y$ have full support on $\mathbb{R}$, and $|\varepsilon_t^X|, |\varepsilon_t^Y|\sim \RV(\theta)$.  If $\mathbf{X}$ causes $\mathbf{Y}$, but $\mathbf{Y}$ does not cause $\mathbf{X}$, then $\Gamma^{time}_{|\mathbf{X}|\to |\mathbf{Y}|}(q)=1$, and $\Gamma^{time}_{|\mathbf{Y}|\to |\mathbf{X}|}(q)<1$. 
\end{customthm}

\begin{proof}
\hypertarget{Proof Absolute value theorem}{}
First, we  show that if $\mathbf{Y}$ does not cause $\mathbf{X}$, then  $\Gamma^{time}_{|\mathbf{Y}|\to |\mathbf{X}|}(q)<1$. This holds even without the extremal causal condition. Similarly as in the proof of Theorem \ref{Theorem 2.2.}, it is sufficient to show that $\exists \lambda>0:$ 
\begin{equation*}
\begin{split}
&\lim_{v\to\infty}\pr(|X_t|>\lambda\mid |Y_0|>v) \\&
= \lim_{v\to\infty}\pr(|\sum_{i=0}^\infty a_i\varepsilon^X_{t-i}|>\lambda\mid |\sum_{i=0}^\infty b_i\varepsilon^Y_{-i}+\sum_{i=0}^\infty d_i\varepsilon^X_{-i}|>v)<1 
\end{split}
\end{equation*}
for $t\leq q$.  

We  use the following fact. Since we assumed that $\varepsilon_i^X$ and $|\varepsilon_i^X|$ are $\RV(\theta)$~ \footnote{This also implies that they satisfy the \textit{tail balance condition} that is defined in \cite{RegularlyVaryingFunctiosMikosch}}, the following holds: 
$$
\pr(|\sum_{i=0}^\infty a_i\varepsilon^X_{t-i}|>v)\sim [\sum_{i=0}^\infty |a_i|^\theta] \pr(|\varepsilon^X_{0}|>v)\sim \pr(\sum_{i=0}^\infty |a_i||\varepsilon^X_{t-i}|>v),
$$
see, e.g., \citep[Lemma 3.5]{RegularlyVaryingFunctiosMikosch}. The second step follows simply from the max-sum equivalence. Finally, we use this fact and the triangle inequality to obtain the following relations
\begin{equation*}
\begin{split}
&\pr(|\sum_{i=0}^\infty a_i\varepsilon^X_{t-i}|>\lambda\mid |\sum_{i=0}^\infty b_i\varepsilon^Y_{-i}+\sum_{i=0}^\infty d_i\varepsilon^X_{-i}|>v)\\&
\leq \frac{\pr(\sum_{i=0}^\infty |a_i||\varepsilon^X_{t-i}|>\lambda; \sum_{i=0}^\infty |b_i||\varepsilon^Y_{-i}|+\sum_{i=0}^\infty |d_i||\varepsilon^X_{-i}|>v)}{\pr(|\sum_{i=0}^\infty b_i\varepsilon^Y_{-i}+\sum_{i=0}^\infty d_i\varepsilon^X_{-i}|>v)} \\&
\sim \frac{\pr(\sum_{i=0}^\infty |a_i||\varepsilon^X_{t-i}|>\lambda; \sum_{i=0}^\infty |b_i||\varepsilon^Y_{-i}|+\sum_{i=0}^\infty |d_i||\varepsilon^X_{-i}|>v)}{\pr(\sum_{i=0}^\infty |b_i||\varepsilon^Y_{-i}|+\sum_{i=0}^\infty |d_i||\varepsilon^X_{-i}|>v)}\\&
=\pr(\sum_{i=0}^\infty |a_i||\varepsilon^X_{t-i}|>\lambda\mid \sum_{i=0}^\infty |b_i||\varepsilon^Y_{-i}|+\sum_{i=0}^\infty |d_i||\varepsilon^X_{-i}|>v).
\end{split}
\end{equation*}

This is for $v\to\infty$ less than $1$ due to the classical non-negative case from Proposition \ref{TentoTheorem} (for any $\lambda\in\mathbb{R}$ such that $\pr(|\sum_{i=0}^\infty a_i\varepsilon^X_{t-i}|>\lambda)<1$).

Second, we show that if $\mathbf{X}$ causes $\mathbf{Y}$, then $\Gamma^{time}_{|\mathbf{X}|\to |\mathbf{Y}|}(q)=1$. Similarly, as in the proof of Theorem \ref{Theorem 2.1}, it is sufficient to show that
$$
\lim_{v\to\infty} \pr(|Y_r |<\lambda \mid  |X_0|>u)=0
$$ 
for every $\lambda\in\mathbb{R}$.
Here, $r\leq q$ is some index with $\delta_r\neq 0$. Using the causal representation with the same notation as in the proof of Theorem \ref{Theorem 2.1}, 
\begin{equation*}
\begin{split}
&\lim_{v\to\infty} \pr(|\sum_{i=0}^\infty b_i\varepsilon^Y_{r-i}+\sum_{i=0}^\infty d_i\varepsilon^X_{r-i}|<\lambda \mid |\sum_{i=0}^\infty a_i\varepsilon^X_{-i}|>v)\\&
\leq \lim_{v\to\infty} \pr(\sum_{i=0}^\infty |b_i||\varepsilon^Y_{r-i}|+\sum_{i=0}^\infty |d_i||\varepsilon^X_{r-i}|<\lambda \mid  |\sum_{i=0}^\infty |a_i||\varepsilon^X_{-i}|>v),
\end{split}
\end{equation*}
where we used the same argument as in the first part of the proof. Therefore, we simplified our model and obtained the classical non-negative case. The result follows from the previous theory. Using Lemma \ref{dva} we obtain the result for finite $n$, 
$$
\lim_{v\to\infty} \pr(\sum_{i=0}^n |b_i||\varepsilon^Y_{r-i}|+\sum_{i=0}^n |d_i||\varepsilon^X_{r-i}|<\lambda \mid  |\sum_{i=0}^n |a_i||\varepsilon^X_{-i}|>v)=0,
$$
because $\Phi=\emptyset$ due to the extremal causal condition. The argument for limiting case $n\to\infty$ follows the same steps as those in the proof of Proposition \ref{TentoTheorem}.  
\end{proof}

%\section*{Theorem \ref{Common cause theorem}}

\noindent
\textbf{Theorem \ref{Common cause theorem}.} Let $(\mathbf{X,Y,Z})^\top = ((X_t,Y_t,Z_t)^\top, t\in\mathbb{Z})$  follow the three-dimensional stable VAR$(q)$ model, with iid regularly varying noise variables. Let $\mathbf{Z}$ be a common cause of both $\mathbf{X}$ and $\mathbf{Y}$, and neither $\mathbf{X}$ nor $\mathbf{Y}$ cause $\mathbf{Z}$. If $\mathbf{Y}$ does not cause $\mathbf{X}$, then $\Gamma^{time}_{\mathbf{Y}\to \mathbf{X}}(p)<1$ for all $p\in\mathbb{N}$.

\begin{proof}
\hypertarget{Proof of Common cause theorem}{}
Let our series have the following representation: 
\begin{align*}
Z_t&=\sum_{i=0}^{\infty} a_i\varepsilon_{t-i}^Z,\\
X_t&=\sum_{i=0}^{\infty} b_i\varepsilon_{t-i}^X+\sum_{i=0}^{\infty} c_i\varepsilon_{t-i}^Z,\\
Y_t&=\sum_{i=0}^{\infty} d_i\varepsilon_{t-i}^X+\sum_{i=0}^{\infty} e_i\varepsilon_{t-i}^Y+\sum_{i=0}^{\infty} f_i\varepsilon_{t-i}^Z.
\end{align*}
Just as in the proof of Theorem \ref{Theorem 2.2.}, it is sufficient to show that $\lim_{v\to\infty}\pr(X_p>\lambda|Y_0>v)<1$ for some $\lambda>0$. After rewriting, 
$$
\lim_{v\to\infty}\pr(\sum_{i=0}^{\infty} b_i\varepsilon_{p-i}^X+\sum_{i=0}^{\infty} c_i\varepsilon_{p-i}^Z>\lambda\mid \sum_{i=0}^{\infty} d_i\varepsilon_{-i}^X+\sum_{i=0}^{\infty} e_i\varepsilon_{-i}^Y+\sum_{i=0}^{\infty} f_i\varepsilon_{-i}^Z>v)<1,  
$$
which follows from Proposition \ref{TentoTheorem} (two countable sums can be written as one countable sum). 
\end{proof}

\begin{customlem}{\ref{minimal lag lemma}}
Let $(\mathbf{X},\mathbf{Y})^\top$ follow the $\hVAR(q, \theta)$ model, where $\mathbf{X}$ causes $\mathbf{Y}$. Let $s$ be the minimal delay. Then, $\Gamma^{time}_{\mathbf{X}\to \mathbf{Y}}(r)<1$ for all $r<s$, and $\Gamma^{time}_{\mathbf{X}\to \mathbf{Y}}(r)=1$ for all $r\geq s$. 
\end{customlem}

\begin{proof}
\hypertarget{Proof of minimal lag lemma}{}
Proving that $\Gamma^{time}_{\mathbf{X}\to \mathbf{Y}}(r)=1$ for all $r\geq s$, is a trivial consequence of the proof of Theorem \ref{Theorem 2.1} (in the first row of the proof, instead of choosing \textit{some} $s\leq q : \delta_s>0$, we choose $s$ to be the minimal delay). 

Concerning the first part, we only need to prove that $\Gamma^{time}_{\mathbf{X}\to \mathbf{Y}}(s-1)<1$, because then also $\Gamma^{time}_{\mathbf{X}\to \mathbf{Y}}(s-i)\leq \Gamma^{time}_{\mathbf{X}\to \mathbf{Y}}(s-1)<1$. As in the proof of Theorem \ref{Theorem 2.2.}, we only need to show that $\lim_{v\to\infty} \pr(Y_{s-1}<\lambda|X_0>v)>0$ for some $\lambda>0$. By rewriting to its causal representation, we obtain the following relation
\begin{equation*}
\lim_{v\to\infty} \pr(\sum_{i=0}^\infty b_i\varepsilon^Y_{s-1-i}+\sum_{i=0}^\infty d_i\varepsilon^X_{s-1-i}<\lambda\mid \sum_{i=0}^\infty a_i\varepsilon^X_{-i}+\sum_{i=0}^\infty c_i\varepsilon^Y_{-i}>v)>0. 
\end{equation*}
We only need to realize that $d_i=0$ for $i\in\{1, \dots, s-1\}$ because $s$ is the minimal delay. Therefore, $\varepsilon^X_0$ is independent of $Y_{s-1}$ and the rest follows from Proposition \ref{TentoTheorem} (where we deal with the two sums as one, and single $\varepsilon^X_0$ is the second \say{sum}). 
\end{proof}

\begin{customthm}{\ref{Theorem o asymptotic}}
Let $(\mathbf{X},\mathbf{Y})^\top=((X_t,Y_t)^\top, t\in\mathbb{Z})$ be a stationary bivariate time series, whose marginal distributions are absolutely continuous with support on some neighborhood of infinity. Let $\Gamma^{time}_{\mathbf{X}\to \mathbf{Y}}(p)$ exist. Let $k_n$ satisfy (\ref{k_deleno_n}) and 
\begin{equation*}
%\label{zxc}
\frac{n}{k_n}P\left(\frac{n}{k_n} \sup_{x\in\mathbb{R}}|\hat{F}_X(x)-F(x)|>\delta\right)\overset{n\to\infty}{\longrightarrow}0, \,\,\, \forall \delta>0.
\end{equation*}
Then, $\E \hat\Gamma^{time}_{\mathbf{X}\to \mathbf{Y}}(p)\overset{n\to\infty}{\to}\Gamma^{time}_{\mathbf{X}\to \mathbf{Y}}(p)$.
\end{customthm}

\begin{proof}  
\hypertarget{Proof of asymptotic unbias}{}
Throughout the proof, we use the fact that for a continuous $X_1$ always holds $\pr(F_X(X_1)\leq t)=t$ for $t\in [0,1]$ and the fact that follows from the stationarity $\pr(\hat{F}_X(X_1)\leq \frac{k}{n})=\pr(X_1\leq X_{(k)})=\frac{k}{n}$, for $k\leq n, k\in \N$. Please note that $X_{(k)}$ is always meant with respect to (not written) index $n$. %We will also use a notation $\1\{\cdot\}$ for the indicator function.

First, notice the following (the third equation follows from the linearity of expectation and stationarity of our series; the fourth equation follows from the definition of conditional expectation; the fifth is quite trivial):
\begin{equation*}
\begin{split}
\E\hat\Gamma^{time}_{\mathbf{X}\to \mathbf{Y}}(p)&= \E\frac{1}{k_n}\sum_{i: X_i\geq\tau_{k_n}^X}\max\{\hat{F}_Y(Y_i), \dots, \hat{F}_Y(Y_{i+p})\}\\&
=\E\frac{1}{n}\sum_{i=1}^n\frac{n}{k_n}\max\{\hat{F}_Y(Y_i), \dots, \hat{F}_Y(Y_{i+p})\}\1(\hat{F}_X(X_i)>1-\frac{k_n}{n})\\&
=\frac{n}{k_n}\E [\hat{F}_Y(\max\{Y_1, \dots, Y_{p+1}\})\1(\hat{F}_X(X_1)>1-\frac{k_n}{n})]\\&
=\frac{n}{k_n} \pr(\hat{F}_X(X_1)>1-\frac{k_n}{n})\cdot\\&\;\;\;\;\;\;\;\;\cdot\E [\hat{F}_Y(\max\{Y_1, \dots, Y_{p+1}\})\mid \hat{F}_X(X_1)>1-\frac{k_n}{n}]\\&
=\E [\hat{F}_Y(\max\{Y_1, \dots, Y_{p+1}\})\mid \hat{F}_X(X_1)>1-\frac{k_n}{n}].
\end{split}
\end{equation*}
Now, use $\hat{F}=F+ \hat{F}-F$ to obtain
\begin{equation*}
\begin{split}
&\E [\hat{F}_Y(\max\{Y_1, \dots, Y_{p+1}\})\mid \hat{F}_X(X_1)>1-\frac{k_n}{n}]\\&
=\E [F_Y(\max\{Y_1, \dots, Y_{p+1}\})\mid \hat{F}_X(X_1)>1-\frac{k_n}{n}] \\&
\quad +\E [(\hat{F}_Y-F_Y)(\max\{Y_1, \dots, Y_{p+1}\})\mid \hat{F}_X(X_1)>1-\frac{k_n}{n}].
\end{split}
\end{equation*}
The second term is less than $\E[\sup_{x\in\mathbb{R}}|\hat{F}_Y(x)-F_Y(x)|]\to 0$ as $n\to\infty$ from the assumptions. All we need to show is that the first term converges to $\Gamma^{time}_{\mathbf{X}\to \mathbf{Y}}(p)$. Rewrite
\begin{equation*}
\begin{split}
&\E [F_Y(\max\{Y_1, \dots, Y_{p+1}\})\mid \hat{F}_X(X_1)>1-\frac{k_n}{n}]\\&
=\E [F_Y(\max\{Y_1, \dots, Y_{p+1}\})\mid X_1>X_{(n-k_n)}]. 
\end{split}
\end{equation*}
Therefore, all we \textit{need} to show is the following 
\begin{equation}
\label{equa}
\begin{split}
\Gamma^{time}_{\mathbf{X}\to \mathbf{Y}}(p)&= \lim_{u\to \infty}\E [F_Y(\max\{Y_1, \dots, Y_{p+1}\})\mid X_1>u]\\&
\overset{}{=}\lim_{n\to\infty}\E [F_Y(\max\{Y_1, \dots, Y_{p+1}\})\mid X_1>X_{(n-k_n)}]. 
\end{split}
\end{equation}
Denote $Z=F_Y(\max\{Y_1, \dots, Y_{p+1}\})$. Denote $u_n\in\mathbb{R}$ as  $1-\frac{k_n}{n}$ quantiles of $X_1$, that is, numbers fulfilling $\pr(X_1>u_n)=\frac{k_n}{n}$. Because $u_n\to\infty$, \eqref{equa} is equivalent to
\begin{equation}
\label{pes}
\lim_{n\to\infty}\E[Z\mid X_1>u_n]\overset{}{=} \lim_{n\to\infty}\E[Z\mid X_1>X_{(n-k_n)}].
\end{equation}
Hence, if we prove \eqref{pes}, our proof will be complete. Rewrite (using identity $\1(a>b) = \1(c>a>b) + \1(a>c>b) + \1(a>b>c)$ when no ties are present):
\begin{equation*}
\begin{split}
&\E[Z\mid X_1>u_n]=\frac{1}{\pr(X_1>u_n)}\int_{\Omega}Z\cdot \1(X_1>u_n)\,\diff\pr
=\frac{n}{k_n}\int_{\Omega}Z\cdot \1(X_1>u_n)\,\diff\pr \\&= \frac{n}{k_n}\int_{\Omega}Z\cdot \1(X_{(n-k_n)}>X_1>u_n)\,\diff\pr + \frac{n}{k_n}\int_{\Omega}Z\cdot \1(X_1>X_{(n-k_n)}>u_n)\,\diff\pr \\&\quad + \frac{n}{k_n}\int_{\Omega}Z\cdot \1(X_1>u_n>X_{(n-k_n)})\,\diff\pr.
\end{split}
\end{equation*}

On the other hand, rewrite also
\begin{equation*}
\begin{split}
&\E[Z\mid X_1>X_{(n-k_n)}]=\frac{1}{\pr(X_1>X_{(n-k_n)})}\int_{\Omega}Z\cdot \1(X_1>X_{(n-k_n)})\,\diff\pr\\&
=\frac{n}{k_n}\int_{\Omega}Z\cdot \1(X_1>X_{(n-k_n)})\,\diff\pr = \frac{n}{k_n}\int_{\Omega}Z\cdot \1(u_n>X_1>X_{(n-k_n)})\,\diff\pr\\& + \frac{n}{k_n}\int_{\Omega}Z\cdot \1(X_1>X_{(n-k_n)}>u_n)\,\diff\pr + \frac{n}{k_n}\int_{\Omega}Z\cdot \1(X_1>u_n>X_{(n-k_n)})\,\diff\pr.
\end{split}
\end{equation*}
Note that these two equations differ only in the first term. Therefore, to show \eqref{pes}, we only need to show that 
$$
\frac{n}{k_n}\int_{\Omega}Z\cdot \1(X_{(n-k_n)}>X_1>u_n)\,\diff\pr - \frac{n}{k_n}\int_{\Omega}Z\cdot \1(u_n>X_1>X_{(n-k_n)})\,\diff\pr 
\overset{n\to\infty}{\to} 0.
$$
We show that the first term goes to $0$. Analogously, the second term can be shown to converge to $0$.

We know that $0\leq Z \leq 1$ and we have for the first term:
\begin{equation*}
\begin{split}
&\frac{n}{k_n}\int_{\Omega}Z\cdot \1(X_{(n-k_n)}>X_1>u_n)\,\diff\pr\leq \frac{n}{k_n} \pr(X_{(n-k_n)}>X_1>u_n)\\&
=\pr(X_{(n-k_n)}>X_1\mid X_1>u_n)=\pr(X_{(n-k_n)}>X_1\mid  F_X(X_1)>1-\frac{k_n}{n})\\&=1-\pr(X_1\geq X_{(n-k_n)}\mid F_X(X_1)>1-\frac{k_n}{n})\\&
=1-\pr(\hat{F}_X(X_1)\geq 1-\frac{k_n}{n}\mid F_X(X_1)>1-\frac{k_n}{n})\\&
=1-\pr(F_X(X_1) + (\hat{F}_X(X_1) - F_X(X_1))\geq1-\frac{k_n}{n}\mid F_X(X_1)>1-\frac{k_n}{n})\\&
\leq 1- \pr(F_X(X_1) - \sup_{x\in\mathbb{R}}|\hat{F}_X(x) - F_X(x)|\geq1-\frac{k_n}{n}\mid F_X(X_1)>1-\frac{k_n}{n}).
\end{split}
\end{equation*}
Denote $S_n:= \sup_{x\in\mathbb{R}}|\hat{F}_X(x) - F_X(x)|$. It is sufficient for our proof to show that  $$\pr(F_X(X_1) - S_n\geq 1-\frac{k_n}{n}\mid F_X(X_1)>1-\frac{k_n}{n})\overset{n\to\infty}{\to} 1.$$ 
Choose $\varepsilon>1$, define $\delta=1-\frac{1}{\varepsilon}$. Rewrite
\begin{equation*}
\begin{split}
&\pr(F_X(X_1) - S_n\geq 1-\frac{k_n}{n}\mid F_X(X_1)>1-\frac{k_n}{n}) \\&
=\frac{n}{k_n}\pr(F_X(X_1) - S_n\geq 1-\frac{k_n}{n}; F_X(X_1)>1-\frac{k_n}{n}) \\&
\geq \frac{n}{k_n}\pr(F_X(X_1) - S_n\geq 1-\frac{k_n}{n} ; F_X(X_1)>1-\frac{k_n/\varepsilon}{n})\\&
\geq \frac{n}{k_n}\pr( S_n\leq \frac{k_n - k_n/\varepsilon}{n} ; F_X(X_1)>1-\frac{k_n/\varepsilon}{n})\\&
= \frac{n}{k_n}\pr( \frac{n}{k_n}S_n\leq \delta ; F_X(X_1)>1-\frac{k_n/\varepsilon}{n}).
\end{split}
\end{equation*}
Use the identity $\pr(A\cap B) = 1-\pr(A^c) - \pr(B^c) + \pr(A^c\cap B^c)$  and continue 
\begin{equation*}
\begin{split}
&\frac{n}{k_n}\pr( \frac{n}{k_n}S_n\leq \delta ; F_X(X_1)>1-\frac{k_n/\varepsilon}{n})\\&
=\frac{n}{k_n} [1-\pr( \frac{n}{k_n}S_n>\delta) - \pr(F_X(X_1)\leq 1-\frac{k_n/\varepsilon}{n})\\&\quad + \pr( \frac{n}{k_n}S_n>\delta ; F_X(X_1)\leq 1-\frac{k_n/\varepsilon}{n})) ]\\&
\geq\frac{n}{k_n} [1-\pr( \frac{n}{k_n}S_n>\delta) - (1-\frac{k_n/\varepsilon}{n}) +0]\\&
=\frac{n}{k_n}[\frac{k_n/\varepsilon}{n} - \pr( \frac{n}{k_n}S_n>\delta) ]=\frac{1}{\varepsilon} -\frac{n}{k_n}\pr( \frac{n}{k_n}S_n>\delta) \overset{n\to\infty}{\to}\frac{1}{\varepsilon}  \overset{\varepsilon\to 1}{\to}1.
\end{split}
\end{equation*}
Altogether, we proved that $
\lim_{n\to\infty}\E[Z\mid X_1>u_n]=\lim_{n\to\infty}\E[Z\mid X_1>X_{(n-k_n)}],
$
 from which the theorem follows. 
\end{proof}
\begin{customconsequence}{\ref{Consequence5}}
Let $\mathbf{X}\to\mathbf{Y}$. Under the assumptions of Theorem~\ref{Theorem o asymptotic} and Theorem~\ref{Theorem 2.1}, the proposed estimator is consistent; that is,  $\hat\Gamma^{time}_{\mathbf{X}\to \mathbf{Y}}(p)\overset{P}{\to}\Gamma^{time}_{\mathbf{X}\to \mathbf{Y}}(p)$ as $n\to\infty$. 
\end{customconsequence}
\begin{proof}
Since  $\Gamma^{time}_{\mathbf{X}\to \mathbf{Y}}(p)=1$ from Theorem~\ref{Theorem 2.1} and trivially  $\hat\Gamma^{time}_{\mathbf{X}\to \mathbf{Y}}(p)\overset{a.s.}{\leq}1$, Theorem~\ref{Theorem o asymptotic} implies that $\E \hat\Gamma^{time}_{\mathbf{X}\to \mathbf{Y}}(p)\overset{n\to\infty}{\to}1$ and $\var\big(\hat\Gamma^{time}_{\mathbf{X}\to \mathbf{Y}}(p)\big)\overset{n\to\infty}{\longrightarrow} 0$. This implies consistency. 
\end{proof}

\begin{customlem}{\ref{lemma o concentration inequality}}
Let $\mathbf{X} = (X_t, t\in\mathbb{Z})$ has the form 
 $$
 X_t = \sum_{k=0}^\infty a_k \varepsilon_{t-k},
 $$
where $(\varepsilon_t, t \in \mathbb{Z})$ are iid random variables with density $f_\varepsilon$. Assume $|a_k|\leq \gamma k^{-\beta}$ for some $\beta>1$, $\gamma>0$ and all $k \in \mathbb{N}$. Let $f^\star = \max(1, |f_\varepsilon|_{\infty}, |f'_{\varepsilon}|_{\infty}) < \infty$, where $|f|_{\infty} = \sup_{x\in\mathbb{R}}|f(x)|$ is the supremum norm. Assume $\E|\varepsilon_0|^q<\infty$ for $q>2$ and $\pr(|\varepsilon_0|>x) \leq L(\log x)^{-r_0}x^{-q}$ for some constants $L>0, r_0>1$ and for every $x>1$. Then, if a sequence $(k_n)$ satisfies \eqref{k_deleno_n} and 
\begin{equation} \tag{\ref{aaa}}
\exists c>\max\left\{ \frac{1}{2},\frac{2}{1+q\beta}\right\}: \frac{k_n}{n^c}\to\infty,  \text{ as }n\to\infty,
\end{equation}
then the condition \eqref{zxc} is satisfied. 
\end{customlem}

\begin{proof}\hypertarget{proof o concentration inequality}{}
The result is a slight modification of Proposition 13 in \cite{Concentration_inequality}. The proposition states that, under our conditions, there exist $\alpha>1/2$ and $n_0\in\mathbb{N}$ such that for all $n>n_0$ and for all $z\geq \gamma \sqrt{n}(\log n)^\alpha$ holds

\begin{equation}
\label{bbb}
\pr( n\sup_{x\in\mathbb{R}}|\hat{F}_X(x)-F_X(x)|/f^\star > z) \leq constant\frac{n}{z^{q\beta}(\log z)^{r_0}}. 
\end{equation}

Choose $\delta>0$ and take $z=\frac{f^\star}{\delta} k_n$. Note that $z\geq \gamma \sqrt{n}(\log n)^\alpha$ for large $n$ due to \eqref{aaa}. 
%since $n^c> \sqrt{n}(\log n)^\alpha$ for large $n$. 
Rewrite \eqref{bbb} into
$$
\frac{n}{k_n}\pr(\frac{n}{k_n} \sup_{x\in\mathbb{R}}|\hat{F}_X(x)-F_X(x)|>\delta) \leq constant\frac{n}{k_n^{q\beta}(\log \frac{f^\star}{\delta}k_n)^{r_0}}\frac{n}{k_n}. 
$$
Since $\frac{n}{k_n^{q\beta}}\frac{n}{k_n}\to 0$ as $n\to\infty$ due to \eqref{aaa}, we obtain that the condition \eqref{zxc} is satisfied.
\end{proof}

\begin{claim}\label{Claim}
 Under the setup in Section \ref{section hidden confounder}, where $\theta_Z\geq\theta_X, \theta_Y$, the following implications hold: 
 \begin{itemize}
     \item $\mathbf{Y}$ does not cause $\mathbf{X}$ and $ \theta_X\geq\theta_Y \implies \Gamma^{time}_{\mathbf{Y}\to \mathbf{X}}(p) < 1 \text{ for all } p\in\mathbb{N}$.
     \item $\mathbf{X}$  causes  $\mathbf{Y}$ and  $ \theta_X, \theta_Y>0\implies \Gamma^{time}_{\mathbf{X}\to \mathbf{Y}}(q)=1$.
 \end{itemize}
 
On the other hand, if $\theta_X<\theta_Y$, then $\Gamma^{time}_{\mathbf{Y}\to \mathbf{X}}(q) =1$ can happen even if  $\mathbf{Y}$ does not cause $\mathbf{X}$.  Hence, the results from Subsection \ref{Section 2.2} are still valid as long as $\theta_X\geq\theta_Y$, while they might no longer be true if $\theta_X<\theta_Y$. 
\end{claim}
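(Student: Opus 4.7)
I would handle the three implications (the two positive ones and the counter-example) in sequence. Throughout, I would work in the causal representation of Model~\ref{Timeseries3definicia}, expanding $X_t$ and $Y_t$ as infinite linear combinations of the three independent iid noise sequences $\varepsilon^X,\varepsilon^Y,\varepsilon^Z$ with non-negative coefficients (since every structural coefficient in Model~\ref{Timeseries3definicia} is non-negative). This puts us in the setting of Propositions~\ref{PrvaLemma} and~\ref{TentoTheorem}, except that the three noise processes now carry possibly different tail indices.

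For the first implication ($\delta_Y=0$ and $\theta_X\geq\theta_Y$ imply $\Gamma^{time}_{\mathbf{Y}\to\mathbf{X}}(p)<1$), I would decompose $Y_0=A+B$, with $A=\sum_i d_i\varepsilon^Y_{-i}$ the $\varepsilon^Y$ contribution and $B$ collecting the rest. Because $\delta_Y=0$, the process $\mathbf{X}$ depends only on $\varepsilon^X,\varepsilon^Z$, so $A$ is independent of $(\mathbf{X},B)$. The tail index of $A$ is $\theta_Y$ and that of $B$ is $\min(\theta_X,\theta_Z)=\theta_X\geq\theta_Y$, so max-sum equivalence gives $\liminf_v \pr(A>v)/\pr(Y_0>v)>0$. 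Choosing $\lambda,M$ large enough that $\pr(\max_{0\leq j\leq p}X_j<\lambda,\,|B|<M)>0$ (possible by continuity and properness) and exploiting the independence of $A$ from $(\mathbf{X},B)$,
\[
\pr\bigl(\max_{0\leq j\leq p}X_j<\lambda\,\big|\,Y_0>v\bigr)\;\geq\;\frac{\pr(\max_{0\leq j\leq p}X_j<\lambda,\,|B|<M)\,\pr(A>v+M)}{\pr(Y_0>v)},
\]
and the right-hand side remains bounded below by a positive constant as $v\to\infty$ by regular variation of $A$.

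For the second implication ($\delta_X\neq 0$ implies $\Gamma^{time}_{\mathbf{X}\to\mathbf{Y}}(3)=1$), the proof of Theorem~\ref{Theorem 2.1} carries over essentially verbatim, because it never uses regular variation and relies only on Proposition~\ref{PrvaLemma} applied to non-negative-coefficient linear combinations. Taking $\delta_X>0$ without loss of generality, write $Y_3=\delta_XX_0+R$ with $R=0.5\,Y_2+0.5\,Z_2+\varepsilon^Y_3$; expand $R$ in its causal representation (with non-negative coefficients in $\varepsilon^X,\varepsilon^Y,\varepsilon^Z$); and apply Proposition~\ref{PrvaLemma} to get $\pr(R>\lambda-\delta_Xv\mid X_0>v)\geq\pr(R>\lambda-\delta_Xv)$, which tends to $1$ as $v\to\infty$. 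Hence $\pr(Y_3>\lambda\mid X_0>v)\to 1$ for every $\lambda$, so $\Gamma^{time}_{\mathbf{X}\to\mathbf{Y}}(3)=1$.

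For the counter-example, I would take $\delta_Y=0$, $\delta_X>0$ and $\theta_X<\theta_Y\leq\theta_Z$. Using the same decomposition $Y_0=M_X+M_Y+M_Z$ as in the first part, $M_X$ now has \emph{strictly} heavier tails than $M_Y$ and $M_Z$, so $\pr(Y_0>v)\sim\pr(M_X>v)$ and $\pr(M_X>v/2\mid Y_0>v)\to 1$. By the single-big-jump principle for the infinite sum $M_X=\sum_i c_i\varepsilon^X_{-i}$ (formalised by truncating at an index $K$ with $\sum_{i>K}c_i^{\theta_X-\delta}$ negligible and applying a Lemma~\ref{dva}-type tail bound to the remainder), conditionally on $M_X>v/2$ a single $\varepsilon^X_{-k}$ is of order $v$ with probability tending to $1$; since this same noise variable enters $X_0$ with positive coefficient $a_k$ and the remaining contributions to $X_0$ stay distributionally bounded, $X_0\to\infty$ in conditional probability, $F_X(X_0)\to 1$, and therefore $\Gamma^{time}_{\mathbf{Y}\to\mathbf{X}}(q)=1$, showing that the $\theta_X\geq\theta_Y$ hypothesis in the first part cannot be dropped. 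The main obstacle is making this last single-big-jump step rigorous in the infinite-sum, unequal-tail setting; I would adapt the truncation argument from the proof of Proposition~\ref{TentoTheorem}, which is its closest analogue and carries over with minor modifications to track the heaviest of the three tail indices.
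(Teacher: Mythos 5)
Your proposal is correct in outline, and it is worth knowing that the paper itself deliberately stops short here: it states \say{We do not provide rigorous proof of this claim} and offers only a heuristic, so your write-up is in places more detailed than the original. On the second bullet the two arguments coincide exactly: both observe that the proof of Theorem~\ref{Theorem 2.1} never uses regular variation, only Proposition~\ref{PrvaLemma} applied to non-negative-coefficient causal representations, so it survives unequal tail indices verbatim. On the first bullet your route is genuinely different: the paper proposes rewriting Proposition~\ref{TentoTheorem} with unequal tail indices as an inequality and then rerunning the proof of Theorem~\ref{Theorem 2.2.}, whereas you bypass any modification of Proposition~\ref{TentoTheorem} via the decomposition $Y_0=A+B$, the independence of the $\varepsilon^Y$-part $A$ from $(\mathbf{X},B)$, and the elementary inclusion $\{A>v+M\}\cap\{|B|<M\}\subseteq\{Y_0>v\}$. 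This is more self-contained and needs only the one-sided bound $\limsup_{v\to\infty}\pr(Y_0>v)/\pr(A>v)<\infty$ (which is where $\theta_X\geq\theta_Y$ and $\theta_Z\geq\theta_Y$ enter), at the price of not recovering the exact limiting constant that a modified Proposition~\ref{TentoTheorem} would deliver. On the counterexample, the paper simply invokes $\lim_{u\to\infty}\pr(\varepsilon^X_t>u\mid \varepsilon^X_t+\varepsilon^Y_t>u)=1$ (Lemma B.6.1 in \cite{SRE}) together with the causal representation; your truncation/single-big-jump reconstruction is the same principle carried out by hand, and the step you flag as the remaining obstacle (making the big-jump transfer rigorous for infinite sums with unequal indices) is precisely the gap the paper also leaves open, so you are not missing anything the paper supplies.

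Two small points to watch. First, the noises in Model~\ref{Timeseries3definicia} are Student's $t$, hence two-sided: your event inclusion and Proposition~\ref{PrvaLemma} (which restricts signs of coefficients, not of the $\varepsilon_i$) are unaffected, but every max-sum equivalence you invoke must be the tail-balanced two-sided version. Second, in the single-big-jump step the conclusion $\Gamma^{time}_{\mathbf{Y}\to\mathbf{X}}(3)=1$ is model-specific: it requires, as you implicitly check, that every $\varepsilon^X_{-m}$ with $m\geq 3$ carrying positive weight in $Y_0$ also enters $X_0$ with positive weight ($0.5^m$ here) --- the analogue of the extremal causal condition --- so state this explicitly rather than letting it ride on the AR($1$) structure of $\mathbf{X}$.
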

We do not provide rigorous proof of this claim. However, we give here some simple intuition why we believe it is true.  $\Gamma^{time}_{\mathbf{Y}\to \mathbf{X}}(q)$ will be even smaller than in the case when $\theta_X = \theta_Y$, since the effect of $\mathbf{X}$ on $\mathbf{Y}$ in extremes will be much smaller if $\mathbf{X}$ has lighter tails than $\mathbf{Y}$. To
make this more rigorous, it is possible to  rewrite Proposition \ref{TentoTheorem} with unequal tail indexes and claim that

$$
\lim_{u\to\infty}\pr(\sum_{i=0}^\infty a_i \varepsilon_i^X<\lambda\mid \sum_{i=0}^\infty b_i\varepsilon_i^X + \sum_{i=0}^\infty c_i\varepsilon_i^Y>u)\leq \pr(\sum_{i=0}^\infty a_i \varepsilon_i^X<\lambda)\frac{C+\sum_{i\in\Phi}b_i^\theta}{C+B}
$$
for all $\lambda\in\mathbb{R}$, where the notation follows Proposition \ref{TentoTheorem}. The proof of Theorem \ref{Theorem 2.2.} would follow the same steps with modified Proposition  \ref{TentoTheorem}. 

As for the second bullet-point, the proof of Theorem \ref{Theorem 2.1} does not use the regular variation condition. Consequently, if $\mathbf{X}\to \mathbf{Y}$, we deduce that $\Gamma^{time}_{\mathbf{X}\to \mathbf{Y}}(q)=1$, irrespective of the tail indexes.

Model~\ref{Timeseries3definicia} with $\delta_Y=0, \delta_X = 1$ and $\theta_X< \theta_Y$ satisfies $\Gamma^{time}_{\mathbf{Y}\to \mathbf{X}}(3) =1$ while  $\mathbf{Y}$ does not cause $\mathbf{X}$. This follows simply from the identity $\lim_{u\to\infty}P(\varepsilon^X_t>u\mid \varepsilon^X_t+ \varepsilon_t^Y>u) = 1$ (follows from Lemma B.6.1 in \cite{SRE}). Using the causal representation of $\mathbf{X}$ and $\mathbf{Y}$ in Model~\ref{Timeseries3definicia}, we simply obtain $\Gamma^{time}_{\mathbf{Y}\to \mathbf{X}}(3) =1$.  

\renewcommand{\bibname}{Bibliography}

%%% Generate the bibliography. Beware that if you cited no works,
%%% the empty list will be omitted completely.

\bibliography{bibliography}% common bib file
% if required, the content of .bbl file can be included here once bbl is generated
%\input sn-article.bbl

%% Default %%
%%\input sn-sample-bib.tex%

\end{document}